\newcommand{\mailurl}[1]{\email{\href{mailto:#1}{#1}}}
\let\uml\"
\title{Rapid Decay for Principal \'Etale Groupoids}
\author{Alex Weygandt}
\address{Department of Mathematics, Texas A\&M Univeristy, 155 Ireland Street, College Station, TX 77840, USA} 
\thanks{This work was partially supported by NSF 1952693.} 
\keywords{\'Etale groupoid; rapid decay; property (RD); polynomial growth}
\subjclass[2020]{22A22, 46L05}
\newcommand{\R}{\mathbb{R}}
\newcommand{\C}{\mathbb{C}} 
\newcommand{\N}{\mathbb{N}}
\newcommand{\Z}{\mathbb{Z}}
\newcommand{\T}{\mathbb{T}}
\newcommand{\ve}{\varepsilon}
\renewcommand{\phi}{\varphi}
\newcommand{\F}{\mathbb{F}}
\newcommand{\bbB}{\mathbb{B}}
\newcommand{\G}{\mathcal{G}}
\newcommand{\Go}{\mathcal{G}^{(0)}}
\newcommand{\Gt}{\mathcal{G}^{(2)}}
\newcommand{\CcG}{C_c(\mathcal{G})}
\renewcommand{\H}{\mathcal{H}}
\newcommand{\Ho}{\mathcal H^{(0)}}
\newcommand{\E}{\mathcal{E}}
\newcommand{\ScGE}{\Sigma_c(\mathcal{G},\mathcal{E})}
\newcommand{\CrGE}{C^*_r(\mathcal{G},\mathcal{E})}
\DeclareMathOperator{\propa}{prop}
\DeclareMathOperator{\supp}{supp}
\DeclareMathOperator{\id}{id}
\theoremstyle{plain}
    \newtheorem{theorem}{Theorem}[section]
    \newtheorem{lemma}[theorem]{Lemma}
    \newtheorem{corollary}[theorem]{Corollary}
    \newtheorem{proposition}[theorem]{Proposition}
\theoremstyle{definition}
    \newtheorem{definition}[theorem]{Definition}
    \newtheorem{example}[theorem]{Example}
\begin{document} 
 
\begin{abstract}  
     This work concerns a generalization of property (RD) from discrete groups to twisted \'etale groupoids equipped with a length function.  
     We show that, under the assumption that the \'etale groupoid is principal, twisted property (RD) is equivalent to polynomial growth.  
     This generalizes a result of Chen and Wei concerning  rapid decay for metric spaces with bounded geometry.  
     Additionally, some permanence properties of groupoid (RD) are established.  
\end{abstract} 
\maketitle
\tableofcontents


\section{Introduction}

Let $\Gamma$ be a discrete group, and let $\ell$ be a length function on $\Gamma$.  
This means that $\ell$ is a mapping from $\Gamma$ to the closed half-line $[0,\infty)$ which is subadditive, maps the identity of $\Gamma$ to $0$, and is inverse-invariant.  
We then say that $\Gamma$ has \textit{property (RD)} (short for \textit{rapid decay}) with respect to $\ell$ if there exist constants $C,t\ge0$ such that for all finitely supported $f:\Gamma\to\C$, we have $\|f\|_{C^*_r\Gamma}\le C\|f\|_{\ell,t}$. 
Here, $\|\cdot\|_{C^*_r\Gamma}$ is the operator norm on $\ell^2\Gamma$ given by convolution, and $\|f\|_{\ell,t}=(\sum_{\gamma\in\Gamma}|f(\gamma)|^2(1+\ell(\gamma))^{2t})^{1/2}$ is the weighted $\ell^2$-norm.

First shown to hold for free groups by Haagerup in \cite{Haag1978}, property (RD) for groups was formally defined by Jolissaint in \cite{Joli1990}.  
In that paper, they showed that property (RD) is preserved under taking subgroups and extensions, and is implied by polynomial growth and groups acting geometrically on hyperbolic spaces.  
Building on the latter, de la Harpe showed \cite{delaHa1988} that property (RD) is enjoyed by all Gromov hyperbolic groups.  
Hence property (RD) is satisfied by a large class of groups, and in the past 30+ years many other classes of groups have been shown to satisfy property (RD). 
For a more thorough survey of the history, and of classes of groups for which property (RD) is known, unknown, or conjectured, we refer the reader to \cite{Chat2017}.

One of the first major applications of property (RD) to noncommutative geometry came in \cite{CoMo1990}, where they used it to show that hyperbolic groups satisfy the Novikov conjecture.  
They noted that property (RD) for a finitely generated group $\Gamma$ (with the word length function) gave a means of generating trace-like maps on the $K$-theory of $C^*_r\Gamma$ from well-behaved cocycles on $\Gamma$ (for a groupoid generalization see proposition 6.4 in \cite{Hou2017}).  
The next substantial application of property (RD) came from Lafforgue in \cite{Laff2002},  whose analysis showed the Baum-Connes conjecture holds for a large class of groups satisfying property (RD).  
Here, it was important that property (RD) for $\Gamma$ provides a Banach subalgebra $A$ of $C^*_r\Gamma$ containing $\C\Gamma$ such that (i) the inclusion map $A\hookrightarrow C^*_r\Gamma$ induces an isomorphism on $K$-theory, and (ii) the $A$-norm of elements of $\C\Gamma$ depends only on the magnitude of its coefficients (see \cite{Laff2000}).

Several generalizations and analogues of rapid decay have appeared.  
Considering only functions on the group which are constant on spheres, one obtains what is called radial rapid decay, as considered by Valette in \cite{Val1997}.
In the appendix of \cite{Math2006}, Chatterji defines a rapid decay property for groups with a given $2$-cocycle.  
In another direction, one can consider rapid decay for representations of groups on $L^p$ spaces for $p\ne2$, as done in \cite{liao2017}.

Leaving the realm of groups, one can study a rapid decay property for metric spaces, as done in \cite{Chen2003} and \cite{JiYu2020} (also considered below).  
In another direction, (RD) was also generalized to the setting of quantum groups, as in \cite{Verg2007}. 
In \cite{Hou2017}, property (RD) was extended to the setting of \'etale groupoids.  
In their work, it is shown that several useful consequences of the group rapid decay property extend to this generalized setting, and give some examples of groupoids admitting (RD).  
Aside from groups, all examples of groupoids with the (RD) property satisfy the polynomial growth condition, defined below.  
In the present work, we show that for a large class of \'etale groupoids, this is about as much as one can expect.

The goal of the present work is to introduce a rapid decay type property for twisted \'etale groupoids, as introduced in \cite{Rena2008}.  
This simultaneously generalizes the results of \cite{Hou2017} and the appendix of \cite{Math2006}.  
We show that, under mild topological assumptions, principality conditions on the \'etale groupoid imply that rapid decay (with or without twists) is equivalent to polynomial growth of the length function (see Theorems \ref{princRD} and \ref{topprincRD}).

The rest of the paper is organized as follows.  
In section 2, we provide some background information on groupoids, twists, and their operator algebras, then define property (RD) and list some known consequences.   
Section 3 contains the main results of the paper, where we show that for continuous length functions and principal groupoids, property (RD) is equivalent to polynomial growth.  
The last section is devoted to studying some permanence properties of property (RD).

\section{Preliminaries}

\subsection{Groupoids, twists, and their algebras}

In this section we recall the definitions of groupoids, twists, and the (reduced) $C^*$-algebras associated to such data.  
For additional background information, we refer the reader to \cite{Rena1980,will2019,sims2020}.

By a \textit{groupoid} we mean a small category in which every morphism is invertible.  
We will typically denote groupoids by the calligraphic letters $\G$ and $\H$.  
Given a groupoid $\G$, the set of objects of $\G$ (considered a subset of $\G$ by identifying an object with its identity morphism) is called the \textit{unit space} of $\G$, and is denoted $\Go$.  
Additionally, we denote the source and range maps by $s,r:\G\to\Go$, respectively, we denote the inverse map $\G\to\G$ by $\gamma\mapsto\gamma^{-1}$, and consider composition in the category to be a map from $\Gt:=\{(\gamma,\sigma)\in\G\times\G:s(\gamma)=r(\sigma)\}$ to $\G$, and write it $(\gamma,\sigma)\mapsto\gamma\sigma$.  
Given two groupoids $\G$, $\H$, a {\em groupoid homomorphism} from $\G$ to $\H$ is a map $\phi:\G\to\H$ that is compatible with the source, range, product, and inversion maps.

Let $\G$ be a groupoid.  Given $x\in\Go$, the {\em source fiber} of $x$ is the subset $\G_x:=\{\gamma\in\G:s(\gamma)=x\}$, the \textit{range fiber} at $x$ is $\G^x:=\{\gamma\in\G:r(\gamma)=x\}$, and the \textit{isotropy group} at $x$ is $\G_x^x=\G_x\cap\G^x$.  
The \textit{isotropy subgroupoid} of $\G$ is the subgroupoid $\operatorname{Iso}(\G)=\sqcup_{x\in\Go}\G_x^x$.  
Note that there are inclusions $\Go\subset\operatorname{Iso}(\G)\subset\G$.
We say that $\G$ is a \textit{group bundle} if $\operatorname{Iso}(\G)=\G$, and we say that $\G$ is \textit{principal} if $\operatorname{Iso}(\G)=\Go$.

In this paper, a \textit{topological groupoid} is a groupoid $\G$ equipped with a locally compact and Hausdorff topology such that all the structure maps are continuous, where the domain of the composition map, $\Gt$, is given the relative product topology.\footnote{Our assumptions that the topology be locally compact and Hausdorff are common, although not universal.}
A topological groupoid is said to be \textit{\'etale} if the source map is a local homeomorphism.  
A subset $K$ of $\G$ is a \textit{bisection} if it is contained in an open set $U$ of $\G$ such that the restrictions of the source and range maps to $U$ are homeomorphisms onto open subsets of $\Go$.

We now list some of the basic facts about \'etale groupoids which will be used in the sequel.  
For proofs, one can consult the references listed at the beginning of this section.

\begin{proposition}
    Let $\G$ be an \'etale groupoid.
    \begin{enumerate}[label = (\roman*)]
        \item The unit space $\Go$ is a clopen subset of $\G$.
        \item For each $x\in\Go$, the source and range fibres $\G_x$ and $\G^x$ are discrete subspaces of $\G$.
        \item The collection of open bisections of $\G$ forms a basis for the topology of $\G$.
        \item The product map $\Gt\to\G$, $(\gamma,\sigma)\mapsto\gamma\sigma$ is an open map.
    \end{enumerate}
\end{proposition}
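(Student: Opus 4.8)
The plan is to derive all four statements from a single structural observation: since $\G$ is \'etale, the source map $s$ is a local homeomorphism, and because inversion is a homeomorphism of $\G$ with $r=s\circ(\,\cdot\,)^{-1}$, the range map $r$ is a local homeomorphism as well. Thus around each point of $\G$ there are open neighborhoods on which $s$ (respectively $r$) restricts to a homeomorphism onto an open subset of $\Go$, and the associated local inverse sections are the main tool throughout. I would treat the four items in order, each time exploiting continuity of the structure maps together with the injectivity afforded by such neighborhoods.

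For (i), closedness is the quick half: since $\G$ is Hausdorff the diagonal of $\G\times\G$ is closed, and $\Go=\{\gamma\in\G:\gamma=r(\gamma)\}$ is the preimage of that diagonal under the continuous map $\gamma\mapsto(\gamma,r(\gamma))$, hence closed. For openness I would use uniqueness of local sections of $s$. Given $x\in\Go$, choose an open $U\ni x$ with $s|_U$ a homeomorphism onto the open set $V:=s(U)\subseteq\Go$, and compare the continuous section $\sigma:=(s|_U)^{-1}\colon V\to U$ with the unit inclusion $V\hookrightarrow\G$. Both are continuous sections of $s$ over $V$ agreeing at $x$, and the local injectivity of $s$ forces them to agree on a subset $E\subseteq V$ that is open in $V$. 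Since $\sigma$ maps $V$ homeomorphically onto the open set $U$, the set $E=\sigma(E)$ is open in $\G$, contains $x$, and lies in $\Go$; as $x$ was arbitrary, $\Go$ is open. Item (ii) is immediate from the same neighborhoods: if $s|_U$ is injective then $U\cap\G_x=\{\gamma\}$ for any $\gamma\in U\cap\G_x$, so every point of $\G_x$ is isolated; the claim for $\G^x$ follows by applying inversion, which carries $\G^x$ homeomorphically onto $\G_x$.

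For (iii), given an open $W\ni\gamma$ I would intersect a neighborhood on which $s$ restricts to a homeomorphism onto an open set with one on which $r$ does, together with $W$; restricting a homeomorphism-onto-an-open-image to a smaller open subset preserves both injectivity and openness of the image, so the resulting set is an open bisection contained in $W$ and containing $\gamma$, whence open bisections form a basis.

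Item (iv) is where the real work lies, and I expect it to be the main obstacle. Since the sets $(B\times C)\cap\Gt$ with $B,C$ open bisections form a basis for the relative topology on $\Gt$ (using (iii)), and a map is open exactly when it sends basic open sets to open sets, it suffices to prove that the product set $BC=m\big((B\times C)\cap\Gt\big)$ is open whenever $B,C$ are open bisections. The delicate point is that $m$ is not locally injective on $\Gt$, so I cannot argue via a single inverse; instead I would build a continuous ``division'' map on the open set $r^{-1}(r(B))$ by $\delta\mapsto(r|_B)^{-1}(r(\delta))^{-1}\delta$, check that it is well defined and continuous, and verify that $BC$ is exactly the preimage of $C$ under this map. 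Establishing that this preimage coincides with $BC$---both inclusions---is the step requiring the most care, as it is precisely here that the bisection property of $B$ and $C$ is used; once it is in hand, $BC$ is open as the preimage of an open set under a continuous map, and openness of $m=m\big((B\times C)\cap\Gt\big)$ across the basis follows at once.
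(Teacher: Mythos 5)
Your proof is correct, but there is nothing in the paper to compare it against: the paper states this proposition as a list of basic facts and explicitly defers the proofs to the references cited at the start of the section (Renault, Williams, and Sims--Szab\'o--Williams). What you have done is supply the omitted argument, and your route is essentially the standard textbook one. All four steps check out: for (i), the comparison of the two continuous sections of $s$ over $V=s(U)$ (the local inverse $\sigma=(s|_U)^{-1}$ and the unit inclusion) is exactly the right device, since two continuous sections of a local homeomorphism agreeing at a point agree on a neighborhood, and the agreement set $E=\sigma(E)$ is open in $U$, hence in $\G$; (ii) and (iii) are the routine consequences you describe, using that $r=s\circ(\,\cdot\,)^{-1}$ is also a local homeomorphism; and for (iv) the division map $d(\delta)=\bigl((r|_B)^{-1}(r(\delta))\bigr)^{-1}\delta$ on the open set $r^{-1}(r(B))$, with the verification $BC=d^{-1}(C)$, is precisely the standard trick. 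One small imprecision: in (iv) you say the identification $BC=d^{-1}(C)$ is where the bisection property of both $B$ and $C$ is used, but in fact only $B$ matters --- injectivity of $r|_B$ gives well-definedness of $d$ and the inclusion $BC\subseteq d^{-1}(C)$, the homeomorphism property of $r|_B$ gives continuity of $d$, while $C$ need only be an open set. This even streamlines your basis argument slightly: it suffices to take basic open sets of $\Gt$ of the form $(B\times C)\cap\Gt$ with $B$ an open bisection and $C$ merely open.
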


A rich source of examples for groupoids come from the notion of an action of a groupoid on a space.  
Before defining groupoid actions, we recall the notion of a  fibered product:  
If $Y_1,Y_2,Z$ are sets and $f_i:Y_i\to Z$ are surjective functions, the {\em fibered product of $Y_1$ and $Y_2$ over $Z$ relative to the maps $f_1,f_2$} is the space
\begin{align*}
    Y_1\tensor[_{f_1}]{*}{_{f_2}}Y_2:=\{(y_1,y_2)\in Y_1\times Y_2: f_1(y_1)=f_2(y_2)\}.
\end{align*}
When $Y_1,Y_2,Z$ are topological spaces and $f_1$ and $f_2$ are continuous, we endow  $Y_1\tensor[_{f_1}]{*}{_{f_2}}Y_2$ with the relative product topology it inherits from being a subspace of the product space $Y_1\times Y_2$.

\begin{definition}
    Let $\G$ be a groupoid and let $Y$ be a set.  
    An {\em action of $\G$ on $Y$}\footnote{What we define here is sometimes called a {\em left action}.  Right actions are defined analogously, but there is no need to consider them here, so we omit the details.} is the data of a surjective map $p:Y\to\Go$, called the {\em anchor map} for the action, and a map
    \begin{align*}
        \G\tensor[_s]{*}{_p}Y\to Y, \qquad 
        (\gamma,y)\mapsto \gamma\cdot y,
    \end{align*}
    such that the following conditions are satisfied:
    \begin{itemize}
        \item If $y\in Y$ and $(\gamma,y)\in\G\tensor[_s]{*}{_p}Y$, then $p(\gamma\cdot y)=r(\gamma)$ and $p(y)\cdot y=y$.
        \item If $(\eta,y)\in\G\tensor[_s]{*}{_p}Y$ and $(\gamma,\eta)\in\Gt$, then $\gamma\cdot(\eta\cdot y)=(\gamma\eta)\cdot y$.
    \end{itemize}
    When $\G$ is a topological groupoid $Y$ is a locally compact Hausdorff space, an action of $\G$ on $Y$ is said to be {\em continuous} if the anchor map $p:Y\to\Go$ and the product map $\G\tensor[_s]{*}{_p}Y\to Y$ are continuous.
\end{definition}

Let $\G$ be a groupoid, let $Y$ be a set, and suppose $\G$ acts on $Y$ with anchor map $p$.  
We define the {\em transformation groupoid}, denoted $\G\ltimes Y$, associated to this action, as follows:  
as a set, $\G\ltimes Y=\G\tensor[_s]{*}{_p}Y$.  
The source, range, and inverse maps are given, for $(\gamma,y)\in\G\ltimes Y$, as follows:
\begin{align*}
    s(\gamma,y)&=(p(y),y),\\
    r(\gamma,y)&=(p(\gamma\cdot y),\gamma\cdot y),\\
    (\gamma,y)^{-1}&=(\gamma^{-1},\gamma\cdot y).
\end{align*}
The product in $G\ltimes Y$ is defined as follows:  if $((\gamma,y),(\eta,z))\in(\G\ltimes Y)^{(2)}$, then $y=\eta\cdot z$ and 
\begin{align*}
    (\gamma,\eta\cdot z)(\eta,z)=(\gamma\eta,z).
\end{align*}
When $\G$ is a topological groupoid, $Y$ is a locally compact Hausdorff space, and the action is continuous, $\G\ltimes Y$ is a topological groupoid.  Moreover, when $\G$ is \'etale, so is $\G\ltimes Y$.

We now lay out our notation for the various vector spaces and algebras associated to groupoids we use in our analysis.  
Fix an \'etale groupoid $\G$.  
The space $\CcG$ of continuous and compactly supported functions $\G\to\C$ is \textit{a priori} a vector space.  
We give it the structure of a $*$-algebra, with product and involution given by the formulas 
\begin{align*}
    (f*g)(\gamma)&=\sum_{\alpha\beta=\gamma}f(\alpha)g(\beta)\\
    f^*(\gamma)&=\overline{f(\gamma)}
\end{align*}
for $f,g\in\CcG$ and $\gamma\in\G$.\footnote{Proving that $f*g$ defined above lies in $\CcG$ makes use of the \'etale condition.}

For $f\in\CcG$, the $\sup$-norm is denoted $\|f\|_\infty=\sup_{\gamma\in\G}|f(\gamma)|$.  
While this is a $C^*$-norm for the pointwise operations, it fails to be submultiplicative for the product and involution defined above.  
In order to define a $C^*$-norm on $\CcG$, we look at a natural class of representations of this algebra on Hilbert spaces.  
For each $x\in\Go$, let $\C\G_x$ denote the space of functions $\G_x\to\C$ with finite support, and let $\ell^2\G_x$ denote the Hilbert space of square summable functions $\G_x\to\C$.  
We define a representation $\lambda_x:\CcG\to\bbB(\ell^2\G_x)$, called the \textit{(left) regular representation} at $x$ as follows:  for $f\in\CcG$, the operator $\lambda_x(f)\in\bbB(\ell^2\G_x)$ acts on $\xi\in\ell^2\G_x$ via the formula
\begin{align*}
	[\lambda_x(f)\xi](\gamma)=\sum_{\eta\in\G_x}f(\gamma\eta^{-1})\xi(\eta)
\end{align*}
for all $\gamma\in\G_x$.  We then define the {\em reduced $C^*$-norm} on $\CcG$ by the formula
\begin{align*}
	\|f\|_{C^*_r\G}=\sup_{x\in\Go}\|\lambda_x(f)\|_{\bbB(\ell^2\G_x)}.
\end{align*}
The {\em reduced $C^*$-algebra} of $\G$, denoted $C^*_r\G$, is then the completion of $\CcG$ with respect to this norm.

We now proceed to define twists over \'etale groupoids.  Our notation will follow that of \cite{sims2020}.

\begin{definition}\label{deftwist}
    Let $\G$ be an \'etale groupoid.  
    By a {\em twist} over $\G$ we mean a sequence
    \begin{center}
        \begin{tikzcd}
            \mathcal{G}^{(0)}\times\mathbb{T} \arrow[r, "i"] & \mathcal{E} \arrow[r, "\pi"] & \mathcal{G}
        \end{tikzcd}
    \end{center}
    of topological groupoids, where $\Go\times\T$ is considered as a trivial group bundle over $\Go$, and $i$ and $\pi$ are continuous groupoid homomorphisms which restrict to homeomorphisms on unit spaces (we identify $\E^{(0)}$ with $\Go$ via $\pi$), such that
    \begin{itemize}
        \item the sequence is short-exact, meaning that $i$ is injective, $\pi^{-1}(\Go)=i(\Go\times\T)$, and $\pi$ is surjective,
        \item for all $\ve\in\E$ and $z\in\T$, we have $i(r(\ve),z)\ve=\ve i(s(\ve),z)$, and
        \item every $\gamma\in\G$ admits an open neighborhood $U\subset\G$ and a continuous section $S:U\to\E$ for the map $\pi$ (meaning $\pi\circ S=\id_U$), such that the map $U\times\T\to\pi^{-1}(U)$ given by $(\eta,z)\mapsto i(r(\eta),z)S(\eta)$ is a homeomorphism.
    \end{itemize}
\end{definition}

The second condition is often seen as requiring that the image of $i$ is ``central" in $\E$, and the third condition means that we can view the map $\pi$ as a ``locally trivial $\G$-bundle."
As we have no need to consider multiple twists over the same groupoid, in the sequel we shall simply refer to the groupoid $\E$ as a ``twist" over $\G$, leaving the maps $i$ and $\pi$ implicit, or we shall say ``let $\E\overset{\pi}{\to}\G$ be a twist", if the bundle map $\pi$ need be made explicit.

Let $\E$ be a twist over an \'etale groupoid $\G$.  
For $\ve\in\E$ and $z\in\T$, we denote by $z\cdot\ve$ the element of $\E$ given by $i(r(\ve),z)\ve$.
Similarly, let $\ve\cdot z=\ve i(s(\ve),z)$, so that $z\cdot\ve=\ve\cdot z$ by the second condition in Definition \ref{deftwist}.  
If $\ve_1,\ve_2\in\E$ and $\pi(\ve_1)=\pi(\ve_2)$, then by {\cite[Lemma 11.1.3]{sims2020}} there is a unique $[\ve_1,\ve_2]\in\T$ such that $\ve_1=[\ve_1,\ve_2]\cdot\ve_2$.

Let $\ScGE=\{f\in C_c(\E):f(z\cdot\ve)=zf(\ve)\text{ for all }z\in\T,\ve\in\E\}$.  
With pointwise addition and scalar multiplication, this is a $\C$-vector space.  
It is a $*$-vector space, with involution given by $(f^*)(\ve)=\overline{f(\ve^{-1})}$ for $f\in\ScGE$ and $\ve\in\E$.  
To define a multiplication on $\ScGE$, fix a (not necessarily continuous) section $\rho:\G\to\E$ for the map $\pi$, and for $f,g\in\ScGE$ define $f*g\in\ScGE$ by
\begin{align*}
    (f*g)(\ve)=\sum_{\gamma\in\G_{s(\ve)}}f(\ve\rho(\gamma)^{-1})g(\rho(\gamma)).
\end{align*}
By the $\T$-equivariance of functions in $\ScGE$, the above formula is independent of the chosen section $\rho$.

For each $x\in\Go$, define a representation $\lambda_x^\rho$ of $\ScGE$ on $\ell^2\G_x$ by extension of the above convolution formula:
\begin{align*}
    [\lambda_x^\rho(f)\xi](\gamma)
    =\sum_{\eta\in\G_x}f\left(\rho(\gamma)\rho(\eta)^{-1}\right)\xi(\eta).
\end{align*}
Up to unitary equivalence, this representation is independent of the chosen section $\rho$.  
We define $C^*_r(\G,\E)$ to be the completion of $\ScGE$ with respect to the norm
\begin{align*}
    \|f\|_{C^*_r(\G,\E)}=\sup_{x\in\Go}\|\lambda_x^\rho(f)\|_{\bbB(\ell^2\G_x)}.
\end{align*}

\begin{example}\label{trivtwist}
    Let $\G$ be an \'etale groupoid.  
    If we consider $\G\times\T$ as a topological groupoid, with the product topology and pointwise operations, then 
    \begin{center}
        \begin{tikzcd}
            \mathcal{G}^{(0)}\times\mathbb{T} \arrow[r, "i"] & \mathcal{G}\times\mathbb{T} \arrow[r, "\pi"] & \mathcal{G}
        \end{tikzcd}
    \end{center}
    where $i$ is the inclusion map and $\pi(\gamma,z)=\gamma$, defines a twist over $\G$, called the {\em trivial twist} over $\G$.  
    There is a natural identification $\Sigma_c(\G,\G\times\T)$ with $\CcG$, sending $f\in\Sigma_c(\G,\G\times\T)$ to the map $\G\ni\gamma\mapsto f(\gamma,1)\in\C$, which extends to an isomorphism from $C^*_r(\G,\G\times\T)$ onto $C^*_r\G$.
\end{example}

\subsection{Rapid decay for twisted \'etale groupoids}

In this subsection, we describe the basic properties of length functions on groupoids, and give a definition of property (RD) for twisted \'etale groupoids.  We begin by recalling the definition of a length function on $\G$, as given in \cite{Hou2017}.

\begin{definition}
    Let $\G$ be a groupoid.  By a \textit{length function} on $\G$ be we mean a map $\ell:\G\to[0,\infty)$ satisfying the following conditions:
    \begin{itemize}
        \item $\ell(x)=0$ for any $x\in\Go$,
        \item $\ell(\gamma^{-1})=\ell(\gamma)$ for any $\gamma\in\G$, and
        \item $\ell(\gamma\eta)\leq\ell(\gamma)+\ell(\eta)$ for any $(\gamma,\eta)\in\Gt$.
    \end{itemize}
\end{definition}

Now suppose that $\G$ is a topological groupoid.  
We say that the length function $\ell$ is \textit{continuous} if it is continuous as a map from $\G$ to $[0,\infty)$.  
A weaker condition that one can ask for is that $\ell$ be \textit{locally bounded}, meaning that $\sup_{\gamma\in K}\ell(\gamma)$ is finite for all compact sets $K$.  
As a partial converse to this definition, following \cite{MaWu2020} we say that $\ell$ is \textit{proper} if for every for every subset $K\subset\G\setminus\Go$, finiteness of the quantity $\sup_{\gamma\in K}\ell(\gamma)$ implies that $K$ is pre-compact.

\begin{example} \ 
    \begin{enumerate}[label = (\arabic*)]
        \item Suppose $\Gamma$ is a discrete group, $\ell$ is a length function on $\Gamma$, and suppose that $\Gamma$ acts by homeomorphisms on the locally compact Hausdorff space $X$.  
        On the transformation groupoid $\Gamma\ltimes X$ one can define a length function (still denoted $\ell$) by the fomula $\ell(\gamma,x)=\ell(\gamma)$.  
        This induced length function is continuous, and proper if the length on $\Gamma$ is proper and $X$ is assumed to be compact.  
        \item More generally, suppose $\G$ and $\H$ are groupoids, and that $\phi:\H\to\G$ is a groupoid homomorphism.  If $\ell$ is a length function on $\G$, then the formula $(\phi^*\ell)(\eta)=\ell(\phi(\eta))$ defines a length function $\H$.  
        \item Let $\G$ be an \'etale groupoid, and assume that $\G$ is compactly generated, meaning that there is a pre-compact subset $K\subset\G$ such that every $\gamma\in\G$ can be written as a product of elements of $K\cup K^{-1}$.  
        Given such a $K$, we define a length $\ell$ on $\G$ by $\ell(x)=0$ for $x\in\Go$, and by
        \begin{align*}
            \ell(\gamma)=\min\{n\in\N:\gamma=\gamma_1\cdots\gamma_n\text{ for some }\gamma_k\in K\cup K^{-1}\}
        \end{align*}
        for $\gamma\in\G\setminus\Go$.
    \end{enumerate}
\end{example}

Given a length function $\ell$ on a groupoid $\G$, for each $x\in\Go$ one can define a pseudometric $\rho_{\ell,x}$ on the source fibre $\G_x$ by the formula $\rho_{\ell,x}(\gamma_1,\gamma_2)=\ell(\gamma_1\gamma_2^{-1})$.  
For $\gamma\in\G_x$, $r>0$, the {\em closed ball of radius $r$ centered at $\gamma$} with respect to this metric will be denoted $B_\ell(\gamma,r)=\{\eta\in\G_x:\ell(\gamma\eta^{-1})\le r\}$.
In section 4 of \cite{MaWu2020}, they study the geometric structure a length function imposes on an \'etale groupoid.  
One particularly nice result, which we will use, is their \textit{local slice lemma}, which we repeat below for convenience.

\begin{lemma}[{\cite[Lemma 5.10]{MaWu2020}}] \label{LocSliceLemma}
    Let $\G$ be a $\sigma$-compact, \'etale groupoid, and let $\ell$ be a continuous and proper length function on $\G$.  
    For every $x\in\Go$ and every pair of constants $R,\ve>0$, there exist a number $R'\in[R,R+\ve)$, an open neighborhood $V\subset\Go$ of $x$, an open subset $W$ of $\G$, and a homeomorphism $\Phi:B_\ell(x,R')\times V\to W$, satisfying the following conditions:
    \begin{enumerate}[label = (\roman*)]
        \item $\Phi(x,y)=y$ for any $y\in V$,
        \item $\Phi(\gamma,x)=\gamma$ for any $\gamma\in B(x,R')$,
        \item $\Phi(B_\ell(x,R')\times\{y\})=B_\ell(y,R')$ for every $y\in V$, and
        \item $|\ell(\gamma\eta^{-1})-\ell(\Phi(\gamma,y)\Phi(\eta,y)^{-1})|<\ve$ for all $\gamma,\eta\in B(x,R')$ and all $y\in V$.
    \end{enumerate}
\end{lemma}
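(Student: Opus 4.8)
The plan is to construct $\Phi$ by ``parallel-transporting'' the finitely many points of the ball $B_\ell(x,R')$ along open bisections into the nearby source fibres, and then to pin down the radius $R'$ and the neighborhood $V$ using two independent mechanisms: a gap in the length spectrum to keep transported points from crossing the sphere of radius $R'$, and a compactness/upper-semicontinuity argument to forbid new points from appearing.

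First I would record two consequences of the hypotheses. Since $\G$ is \'etale, each fibre $\G_x$ is discrete; since $\ell$ is continuous and proper, $\ell^{-1}([0,R'])$ is closed and $\ell^{-1}([0,R'])\setminus\Go$ is pre-compact. Hence $B_\ell(x,R')=\{\eta\in\G_x:\ell(\eta)\le R'\}$ is finite; enumerate it as $x=\gamma_1,\gamma_2,\dots,\gamma_n$. Because the open bisections form a basis and $\G$ is Hausdorff, choose pairwise disjoint open bisections $U_i\ni\gamma_i$ with $U_1\subset\Go$, and put $V_0=\bigcap_i s(U_i)$, an open neighborhood of $x$. For $y\in V_0$ define $\Phi(\gamma_i,y)=(s|_{U_i})^{-1}(y)$, the unique point of $U_i$ with source $y$; this is continuous in $y$. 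Conditions (i) and (ii) are then immediate: $\Phi(\gamma_i,x)=\gamma_i$ by construction, and $\Phi(x,y)=y$ since $U_1\subset\Go$.

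The heart of the matter is (iii), and this is where the freedom in choosing $R'\in[R,R+\ve)$ is spent. The finite set $B_\ell(x,R+\ve)$ realizes only finitely many length-values, so I can pick $R'\in[R,R+\ve)$ together with a gap $\delta>0$ such that no element of $\G_x$ has length in $(R'-\delta,R'+\delta)$. Two estimates then combine. On one hand, continuity of the product, inverse, and $\ell$, applied to the finitely many maps $y\mapsto\Phi(\gamma_i,y)\Phi(\gamma_j,y)^{-1}$ (which equal $\gamma_i\gamma_j^{-1}$ at $y=x$), yields a neighborhood $V\subset V_0$ on which (iv) holds with tolerance $\min(\ve,\delta)$; in particular $\ell(\Phi(\gamma_i,y))$ is within $\delta$ of $\ell(\gamma_i)$, so by the gap each $\Phi(\gamma_i,y)$ lies in $B_\ell(y,R')$, giving $n$ distinct points there. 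On the other hand I bound $|B_\ell(y,R')|$ from above: shrinking $V$ to be pre-compact, the set $K_V=\ell^{-1}([0,R'])\cap s^{-1}(\overline V)$ is compact (its unit part is $\overline V$, and its non-unit part lies in the pre-compact set above), and $B_\ell(y,R')=K_V\cap s^{-1}(y)$ for $y\in V$. Since $K_V\cap s^{-1}(x)=B_\ell(x,R')\subseteq\bigcup_i U_i$, the compact set $D=K_V\setminus\bigcup_i U_i$ misses $s^{-1}(x)$, so $x\notin s(D)$, and for $y$ in the open set $V\setminus s(D)$ the fibre $K_V\cap s^{-1}(y)$ is contained in $\bigcup_i U_i$ and so has at most $n$ points. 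Combining, $B_\ell(y,R')$ has exactly $n$ points and equals $\{\Phi(\gamma_i,y)\}_i$, which is (iii).

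Finally I would assemble the homeomorphism. Because $B_\ell(x,R')$ is finite and discrete, $\Phi$ carries each slice $\{\gamma_i\}\times V$ homeomorphically onto the open set $U_i\cap s^{-1}(V)$; as the $U_i$ are disjoint, $\Phi$ is injective with open image $W=\bigcup_i(U_i\cap s^{-1}(V))$, so $\Phi:B_\ell(x,R')\times V\to W$ is a homeomorphism, and (iv) was arranged above. I expect the genuinely delicate step to be (iii): one must simultaneously prevent the transported points from drifting across the sphere of radius $R'$ (handled by the spectral gap in the choice of $R'$) and new, untracked points of the fibre from materializing as $y$ moves (handled by compactness of $K_V$ and the resulting upper semicontinuity of the fibre cardinality). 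The hypotheses enter precisely here: continuity and properness of $\ell$, together with local compactness of $\Go$, are exactly what make $B_\ell(x,R')$ finite and $K_V$ compact.
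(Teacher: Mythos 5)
There is nothing in the paper to compare your argument against: the paper does not prove this lemma, it imports it verbatim with a citation to \cite[Lemma 5.10]{MaWu2020}. Judged on its own merits, your proof is correct and complete, and it follows the strategy one would expect for this statement. The key verifications all check out: finiteness of $B_\ell(x,R')$ follows from properness of $\ell$ together with closedness and discreteness of $\G_x$, exactly as you say; the transport maps $\Phi(\gamma_i,\cdot)=(s|_{U_i})^{-1}$ along pairwise disjoint open bisections (with $U_1\subset\Go$, available since $\Go$ is clopen) give (i), (ii), and the homeomorphism onto the open set $W=\bigcup_i\bigl(U_i\cap s^{-1}(V)\bigr)$; continuity of $\ell$, inversion, and multiplication at the finitely many pairs $(\gamma_i,\gamma_j)$ gives (iv); and you correctly identify that (iii) needs two separate mechanisms. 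Your gap argument works because $\ell(\gamma_i)\le R'$ together with $\ell(\gamma_i)\notin(R'-\delta,R'+\delta)$ forces $\ell(\gamma_i)\le R'-\delta$, whence $\ell(\Phi(\gamma_i,y))<R'$, so no transported point escapes the closed ball; and your compactness argument works because $K_V=\ell^{-1}([0,R'])\cap s^{-1}(\overline V)$ is compact (its unit part lies in $\overline V$, its non-unit part in the pre-compact set $\ell^{-1}([0,R'])\setminus\Go$), so $s\bigl(K_V\setminus\bigcup_iU_i\bigr)$ is a closed set missing $x$, and off it each fibre $B_\ell(y,R')$ meets only the $U_i$, each in at most the single point $\Phi(\gamma_i,y)$ --- note this gives the inclusion $B_\ell(y,R')\subset\{\Phi(\gamma_i,y)\}_i$ outright, so the cardinality count is not even needed. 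Two cosmetic points: the quantifier order should be repaired, since as written you enumerate $B_\ell(x,R')$ and choose the $U_i$ before $R'$ is fixed two paragraphs later; the fix is trivial because $R'$ and $\delta$ depend only on the (finite) length spectrum of $\G_x$ below $R+\ve$, so choose them first. Also, your argument never uses $\sigma$-compactness; that hypothesis is simply inherited from the framework of \cite{MaWu2020} and its absence from your proof is harmless.
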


This lemma is particularly useful, as it allows us to translate data from a finite subset of source fibres to nearby source fibres.  
We provide one modest example.

\begin{corollary}\label{SameCards}
    Let $\G$ be a $\sigma$-compact, \'etale groupoid, and let $\ell$ be a continuous and proper length function on $\G$.  
    For every $x_0\in\Go$ and $R>0$, there is some open neighborhood $V\subset\Go$ of $x_0$ such that $|B_\ell(x,R)|=|B_\ell(x_0,R)|$ for all $x\in V$.
\end{corollary}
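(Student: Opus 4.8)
The plan is to transport the cardinality equality that Lemma \ref{LocSliceLemma} already supplies at the enlarged radius $R'$ back down to the radius $R$. First I would record that all the balls in question are finite: since $\ell$ is proper the set $\{\gamma\in\G:\ell(\gamma)\le R\}$ is pre-compact, and since the source fibre $\G_{x_0}$ is discrete its intersection $B_\ell(x_0,R)$ with this pre-compact set is finite, and the same holds on every fibre. Thus the quantities $|B_\ell(x,R)|$ are well-defined natural numbers, so comparing them makes sense. The engine of the argument is conclusion (iii) of Lemma \ref{LocSliceLemma}: for each fixed $y\in V$ the homeomorphism $\Phi$ restricts to a bijection $\Phi(\cdot,y):B_\ell(x_0,R')\to B_\ell(y,R')$, whence $|B_\ell(y,R')|=|B_\ell(x_0,R')|$ for every $y\in V$.

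Next I would manufacture a gap in the length spectrum around $R$. Because $B_\ell(x_0,R+1)$ is finite, the set of attained lengths $L:=\{\ell(\gamma):\gamma\in\G_{x_0},\ \ell(\gamma)\le R+1\}$ is a finite subset of $[0,R+1]$, so one can choose $\delta\in(0,1)$ with $L\cap(R-\delta,R+\delta)\subseteq\{R\}$. Applying Lemma \ref{LocSliceLemma} at $x_0$ with this $R$ and with tolerance $\ve<\delta$ yields some $R'\in[R,R+\delta)$, a neighborhood $V$, and $\Phi$ as above. Since no element of $\G_{x_0}$ has length in $(R,R']\subseteq(R,R+\delta)$, we obtain $B_\ell(x_0,R')=B_\ell(x_0,R)$, and therefore $|B_\ell(x_0,R)|=|B_\ell(x_0,R')|=|B_\ell(y,R')|\ge|B_\ell(y,R)|$ for all $y\in V$, which is one of the two desired inequalities.

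For the reverse inequality I would extract an approximate length-preservation statement for the bijection $\Phi(\cdot,y)$. Specializing (iv) to $\eta=x_0$ and using $\Phi(x_0,y)=y$ together with $\Phi(\gamma,y)\in\G_y$, the two products collapse to $\ell(\gamma x_0^{-1})=\ell(\gamma)$ and $\ell(\Phi(\gamma,y)y^{-1})=\ell(\Phi(\gamma,y))$, so that $|\ell(\gamma)-\ell(\Phi(\gamma,y))|<\ve$ for every $\gamma\in B_\ell(x_0,R')$ and $y\in V$. Elements $\gamma$ with $\ell(\gamma)\le R-\delta$ are then carried by $\Phi(\cdot,y)$ into $B_\ell(y,R)$, because $\ell(\Phi(\gamma,y))<R-\delta+\ve<R$; thus every such $\gamma$ contributes to $B_\ell(y,R)$ exactly as it does to $B_\ell(x_0,R)$.

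The hard part, and the genuine crux of the statement, is the treatment of elements of length exactly $R$, i.e. the boundary sphere of the ball. The $\ve$-perturbation in (iv) can push such an element across the threshold, so $\Phi(\gamma,y)$ may acquire length slightly larger than $R$ and drop out of the closed ball $B_\ell(y,R)$; this is precisely the failure mode that a naive transport argument cannot rule out. The clean way to finish is to arrange a genuinely two-sided gap, namely to ensure $R\notin L$, so that the boundary sphere on the central fibre is empty. Then every $\gamma\in B_\ell(x_0,R)$ already satisfies $\ell(\gamma)\le R-\delta$, the estimate above upgrades to $B_\ell(y,R')=B_\ell(y,R)$, and the chain $|B_\ell(x,R)|=|B_\ell(x,R')|=|B_\ell(x_0,R')|=|B_\ell(x_0,R)|$ delivers equality throughout $V$. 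Controlling (or excluding) the elements of length precisely $R$ is therefore the step demanding the most care, and it is exactly where continuity and properness of $\ell$, through the quantitative estimate (iv), must be used most delicately.
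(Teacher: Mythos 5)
Your transport scheme is the natural one, and most of the individual steps are sound: finiteness of the balls (though note the paper's properness hypothesis applies only to subsets of $\G\setminus\Go$, so you should excise the unit before invoking it — the set $\{\gamma\in\G:\ell(\gamma)\le R\}$ contains all of $\Go$, which need not be pre-compact), the bijection $\Phi(\cdot,y):B_\ell(x_0,R')\to B_\ell(y,R')$ from (iii), the length estimate $|\ell(\gamma)-\ell(\Phi(\gamma,y))|<\ve$ from (iv), and the resulting inequality $|B_\ell(y,R)|\le|B_\ell(x_0,R)|$ for $y\in V$. The genuine gap is your last step. You correctly isolate the boundary sphere $\{\gamma\in\G_{x_0}:\ell(\gamma)=R\}$ as the obstruction, but your proposed resolution — to ``arrange'' or ``ensure'' $R\notin L$ — is not a move you are allowed to make: $x_0$ and $R$ are universally quantified in the statement, so the set $L$ of lengths occurring in $\G_{x_0}$ is given data, and if some element of $\G_{x_0}$ has length exactly $R$ you have nothing left to say. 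Your argument therefore proves the corollary only under the additional hypothesis that the sphere of radius exactly $R$ at $x_0$ is empty.

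Moreover, this gap cannot be closed, because the statement as printed fails when that sphere is nonempty. Take $\G=[0,1]\times(\Z/2)$, the trivial bundle of groups over $[0,1]$ with fibre $\Z/2$ (so $s=r$ is the projection to $[0,1]$); this is a compact, Hausdorff, \'etale, $\sigma$-compact groupoid. Define $\ell(x,0)=0$ and $\ell(x,1)=1+x$: this is continuous, symmetric, subadditive, vanishes on units, and is proper (trivially, since $\G$ is compact). At $x_0=0$, $R=1$ we have $|B_\ell(0,1)|=2$, while $|B_\ell(x,1)|=1$ for every $x\in(0,1]$, so no neighborhood $V$ of $x_0$ can work. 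For context: the paper supplies no proof of this corollary, presenting it as immediate from Lemma \ref{LocSliceLemma}, and its actual uses build in precisely the hypothesis you are missing — in Theorem \ref{topprincRD} the cardinality comparison is made at the perturbed radius $R'\in[R_0,R_0+\ve)$ produced by the slice lemma after first choosing $\ve$ with $B_\ell(x_0,R_0+\ve)=B_\ell(x_0,R_0)$, and in Lemma \ref{mpggrpdpg} a uniform gap $B_\ell(\omega,R+\ve)=B_\ell(\omega,R)$ for all $\omega$ is imposed before the corollary is invoked. So your diagnosis of where the difficulty sits is exactly right, and what your argument honestly establishes is the corrected statement (with $R\notin L$, or with $R$ replaced by the slice-lemma radius $R'$), which is also the only form in which the paper ever uses the result.
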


We now define some seminorms on $C_c(\G)$ which will be relevant for our discussion of rapid decay type properties for $\G$.

\begin{definition}\label{GrpdRDnorms} 
    Let $\G$ be an \'etale groupoid, let $\E\overset{\pi}{\to}\G$ be a twist over $\G$, and let $\ell$ be a length function on $\G$.  
    Fix a section $\rho:\G\to\E$ for the map $\pi$.
    For each $x\in\Go$, and $t\ge0$, define seminorms on $\ScGE$ by
    \begin{align*}
        &\|f\|_{\E,\ell,t,s,x}=\left(\sum_{\gamma\in\G_x}|f(\rho(\gamma))|^2(1+\ell(\gamma)^{2t}\right)^{1/2}, \\
        &\|f\|_{\E,\ell,t,s}=\sup_{x\in\Go}\|f\|_{\E,\ell,t,s,x},\\
        &\|f\|_{\E,\ell,t}=\max\{\|f\|_{\E,\ell,t,s},\|f^*\|_{\E,\ell,t,s}\}.
    \end{align*}
\end{definition}

\begin{definition}\label{RDgrpd}
    Let $\G$ be an \'etale groupoid, let $\E$ be at twist over $\G$, and let $\ell$ be a length function on $\G$.  
    We say that $\G$ has \textit{$\E$-twisted rapid decay} (or $\E$-(RD) for short) with respect to the length function $\ell$ if there exist constants $C,t\ge0$ such that 
    \begin{align*}
        \|f\|_{C^*_r(\G,\E)}\le C\|f\|_{\E,\ell,t}
    \end{align*}
    for all $f\in\ScGE$.
    In the case that $\E=\G\times\T$ is the trivial twist, we identify $\ScGE$ with $\CcG$, remove the $\E$ in the subscript of the norms, and simply say that $\G$ has the \textit{rapid decay property}, or property (RD), when it has $\E$-(RD) for this twist.
\end{definition}

Immediately from the definition, one can see that this generalizes the notion of rapid decay for discrete groups:  if $\Gamma$ is a discrete group with a length function $\ell$, then $\Gamma$ has property (RD) with respect to $\ell$ as in Definition \ref{RDgrpd} if and only if it satisfies property (RD) with respect to $\ell$ as described in the first paragraph of the present work.

We now proceed to show that, for a fixed length function, (RD) implies $\E$-(RD) for all twists $\E$.  
This has been shown in the group case, but in order to adapt the proof to this setting, we need a lemma.

\begin{lemma}
    If $f\in\ScGE$, then the map $\G\to\C$, $\gamma\mapsto|f(\rho(\gamma))|$, belongs to $\CcG$.
\end{lemma}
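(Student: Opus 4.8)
The plan is to exploit the fact that, although $|f|$ is defined on $\E$, it is constant on the $\T$-orbits and therefore descends to a genuine function on $\G$ that does not depend on the choice of the section $\rho$; continuity and compact support can then be checked on $\G$ directly, sidestepping the fact that $\rho$ itself need not be continuous.

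First I would record the $\T$-invariance of $|f|$. For $z\in\T$ and $\ve\in\E$ we have $|f(z\cdot\ve)|=|z|\,|f(\ve)|=|f(\ve)|$, since $|z|=1$. Because any two elements $\ve_1,\ve_2\in\E$ with $\pi(\ve_1)=\pi(\ve_2)$ satisfy $\ve_1=[\ve_1,\ve_2]\cdot\ve_2$ with $[\ve_1,\ve_2]\in\T$, the value $|f(\rho(\gamma))|$ depends only on $\gamma=\pi(\rho(\gamma))$ and not on the section $\rho$. I would therefore let $g\colon\G\to\C$ be the well-defined function $g(\gamma)=|f(\rho(\gamma))|$, which satisfies $|f|=g\circ\pi$.

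Next I would establish continuity using the local triviality built into the definition of a twist. Fix $\gamma_0\in\G$ and, by the third condition of Definition \ref{deftwist}, choose an open neighborhood $U\subset\G$ of $\gamma_0$ together with a \emph{continuous} section $S\colon U\to\E$ for $\pi$. Since $g$ is independent of the chosen section, on $U$ we may compute it using $S$ in place of $\rho$, giving $g|_U=|f|\circ S$. Both $|f|$ and $S$ are continuous, so $g$ is continuous on $U$; as $\gamma_0$ was arbitrary, $g\in C(\G)$. For compact support, I would note that if $g(\gamma)\neq0$ then $\rho(\gamma)\in\supp f$, whence $\gamma=\pi(\rho(\gamma))\in\pi(\supp f)$; thus $\{g\neq0\}\subseteq\pi(\supp f)$. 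Since $\supp f$ is compact and $\pi$ is continuous, $\pi(\supp f)$ is compact, hence closed as $\G$ is Hausdorff, so $\supp g=\overline{\{g\neq0\}}\subseteq\pi(\supp f)$ is a closed subset of a compact set and therefore compact. This yields $g\in\CcG$.

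The main obstacle is precisely that $\rho$ is only a set-theoretic (possibly discontinuous) section, so one cannot argue that $g=|f|\circ\rho$ is continuous by composing continuous maps. The key resolution is the observation that $g$ does not depend on $\rho$, which permits replacing $\rho$ locally by the continuous sections $S$ furnished by the local triviality of the twist; every other step is then routine.
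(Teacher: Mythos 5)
Your proposal is correct and follows essentially the same route as the paper: both proofs rest on the observation that $\T$-equivariance makes $|f(\rho(\gamma))|$ independent of the chosen section, use the locally defined continuous sections from the definition of a twist to get continuity (the paper runs an explicit $\ve$-argument where you simply compose continuous maps, a purely stylistic difference), and bound the support by $\pi(\supp f)$.
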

\begin{proof}
    First, we show the map has compact support.  
    Observe that if $\gamma\in\G$ and $|f(\rho(\gamma))|\ne0$, then $\rho(\gamma)\in\supp(f)$.  
    Thus $\gamma=\pi(\rho(\gamma))\in\pi(\supp(f))$.  
    As the latter set is compact, it follows that the set
    \begin{align*}
        \{\gamma\in\G:|f(\rho(\gamma))|\ne0\}
    \end{align*}
    has compact closure.  
    Next, we show continuity.  
    Fix $\gamma_0\in\G$ and $\ve>0$.  
    There is an open neighborhood $U\subset\G$ of $\gamma_0$ and a continuous section $\rho_U:U\to\E$ of $\pi$.  
    As $f$ is continuous, there is an open neighborhood $V\subset\E$ of $\rho_U(\gamma_0)$ such that $|f(\rho_U(\gamma_0))-f(\delta)|<\ve$ whenever $\delta\in V$.  
    Letting $U_0=\rho_U^{-1}(V)\subset\G$, we see that $U_0$ is an open neighborhood of $\gamma_0$, and if $\gamma\in U_0$, we have 
    \begin{align*}
        \left||f(\rho(\gamma_0))|-|f(\rho(\gamma))|\right|
        &=\left||f(\rho_U(\gamma_0))|-|f(\rho_U(\gamma))|\right|
        \\&\le|f(\rho_U(\gamma_0))-f(\rho_U(\gamma))|
        <\ve,
    \end{align*}
    where the first equality follows from the fact that $f(z\cdot\delta)=zf(\delta)$ for all $z\in\T$ and $\delta\in\E$.  
\\\end{proof}

\begin{proposition}\label{RDiERD}
    Let $\G$ be an \'etale groupoid, and let $\ell$ be a length function on $\G$.  If $\G$ has (RD) with respect to $\ell$, then it has $\E$-(RD) with respect to $\ell$ for any twist $\E$ over $\G$.  
\end{proposition}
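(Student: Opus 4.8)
The plan is to reduce the twisted estimate to the untwisted (RD) hypothesis by routing everything through the nonnegative function furnished by the preceding lemma. Fix the twist $\E\overset{\pi}{\to}\G$, a section $\rho:\G\to\E$ for $\pi$, and $g\in\ScGE$. By the lemma the function $h:\G\to\C$, $h(\gamma)=|g(\rho(\gamma))|$, lies in $\CcG$. I claim $\G$ then has $\E$-(RD) with the very same constants $C,t$ witnessing its (RD). It suffices to establish the two comparisons
$$\|g\|_{C^*_r(\G,\E)}\le\|h\|_{C^*_r\G}\qquad\text{and}\qquad\|h\|_{\ell,t}=\|g\|_{\E,\ell,t},$$
since then $\|g\|_{C^*_r(\G,\E)}\le\|h\|_{C^*_r\G}\le C\|h\|_{\ell,t}=C\|g\|_{\E,\ell,t}$. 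The one fact used throughout is that $\T$-equivariance makes $|g|$ constant on the fibres of $\pi$: if $\pi(\ve_1)=\pi(\ve_2)$ then $\ve_1=z\cdot\ve_2$ for some $z\in\T$, whence $|g(\ve_1)|=|z|\,|g(\ve_2)|=|g(\ve_2)|$.

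The seminorm equality is a direct computation. Since $h(\gamma)^2=|g(\rho(\gamma))|^2$, the fibrewise source seminorms agree term by term, so $\|h\|_{\ell,t,s}=\|g\|_{\E,\ell,t,s}$. For the adjoint term I would note that $h^*(\gamma)=\overline{h(\gamma^{-1})}=|g(\rho(\gamma^{-1}))|$, while the function of $\CcG$ associated to $g^*$ is $\gamma\mapsto|g^*(\rho(\gamma))|=|g(\rho(\gamma)^{-1})|$. Because $\pi$ is a groupoid homomorphism, $\pi(\rho(\gamma^{-1}))=\gamma^{-1}=\pi(\rho(\gamma)^{-1})$, so the fibre-constancy of $|g|$ gives $|g(\rho(\gamma^{-1}))|=|g(\rho(\gamma)^{-1})|$; hence $h^*$ coincides with the function of $g^*$ and $\|h^*\|_{\ell,t,s}=\|g^*\|_{\E,\ell,t,s}$. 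Taking the maximum of the two source seminorms yields $\|h\|_{\ell,t}=\|g\|_{\E,\ell,t}$.

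For the norm comparison I would fix $x\in\Go$ and compare $\lambda_x^\rho(g)$ and $\lambda_x(h)$ on $\ell^2\G_x$ at the level of matrix entries in the basis $\{\delta_\eta\}_{\eta\in\G_x}$. Their $(\gamma,\eta)$-entries are $g(\rho(\gamma)\rho(\eta)^{-1})$ and $h(\gamma\eta^{-1})$, respectively. Since $\pi(\rho(\gamma)\rho(\eta)^{-1})=\gamma\eta^{-1}=\pi(\rho(\gamma\eta^{-1}))$, fibre-constancy gives $|g(\rho(\gamma)\rho(\eta)^{-1})|=|g(\rho(\gamma\eta^{-1}))|=h(\gamma\eta^{-1})$. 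Thus the matrix of $A:=\lambda_x^\rho(g)$ is dominated entrywise in modulus by the nonnegative matrix of the bounded operator $B:=\lambda_x(h)$. I then invoke the elementary domination principle: for finitely supported $\xi,\zeta$,
$$|\langle A\xi,\zeta\rangle|\le\sum_{\gamma,\eta}|A_{\gamma\eta}|\,|\xi(\eta)|\,|\zeta(\gamma)|\le\sum_{\gamma,\eta}B_{\gamma\eta}\,|\xi(\eta)|\,|\zeta(\gamma)|=\langle B|\xi|,|\zeta|\rangle\le\|B\|\,\|\xi\|\,\|\zeta\|,$$
so that $\|A\|\le\|B\|$, i.e.\ $\|\lambda_x^\rho(g)\|\le\|\lambda_x(h)\|$. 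Taking the supremum over $x\in\Go$ gives $\|g\|_{C^*_r(\G,\E)}\le\|h\|_{C^*_r\G}$, completing the argument.

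The two seminorm identities and the matrix-entry bookkeeping are formal consequences of the $\T$-equivariance built into $\ScGE$. The single load-bearing point — the main obstacle — is the operator-norm domination $\|A\|\le\|B\|$ from entrywise modulus control, which is what actually transports the analytic content of (RD) across the twist; here it is important that $B=\lambda_x(h)$ is a genuine bounded operator arising from $h\in\CcG$ (so that $\|B\|\le\|h\|_{C^*_r\G}<\infty$), which is precisely what the preceding lemma guarantees.
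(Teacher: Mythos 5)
Your proposal is correct and follows essentially the same route as the paper: both use the preceding lemma to produce $h=|g\circ\rho|\in\CcG$, exploit $\T$-equivariance to identify $|g(\rho(\gamma)\rho(\eta)^{-1})|=h(\gamma\eta^{-1})$, and then transfer the (RD) estimate via entrywise domination, with the same constants $C,t$. The only cosmetic difference is that you run the domination through the sesquilinear form $\langle A\xi,\zeta\rangle\le\langle B|\xi|,|\zeta|\rangle$, whereas the paper dominates the vector $\lambda_x^\rho(f)\xi$ pointwise by $\lambda_x(g)|\xi|$ and sums squares; you also spell out the adjoint seminorm identity $\|h^*\|_{\ell,t,s}=\|g^*\|_{\E,\ell,t,s}$, which the paper uses implicitly.
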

\begin{proof}
    With the above lemma, we can proceed as in the proof of Lemma 6.7 in \cite{Math2006}.  
    Let $\E\overset{\pi}{\to}\G$ be a twist over $\G$, and let $\rho:\G\to\E$ be a section for $\pi$. 
    As $\G$ has property (RD), there are constants $C,t\ge0$ such that $\|g\|_{C^*_r\G}\le C\|g\|_{\ell,t}$ for all $f\in\CcG$. 
    Fix $f\in\ScGE$, and define $g\in\CcG$ by $g(\gamma)=|f(\rho(\gamma))|$.  
    If $x\in\Go$ and $\xi\in\ell^2\G_x$, then for any $\gamma\in\G_x$ we have 
    \begin{align*}
        [\lambda_x^\rho(f)\xi](\gamma)|
        \le\sum_{\eta\in\G_x}|f(\rho(\gamma\eta^{-1}))||\xi(\eta)|
        =\left[\lambda_x(g)|\xi|\right](\gamma).
    \end{align*}
    Summing over $\gamma\in\G_x$ and taking a square root yields
    \begin{align*}
        \|\lambda_x^\rho(f)\xi\|_{\ell^2\G_x}
        &\le\|\lambda_x(g)|\xi|\|_{\ell^2\G_x}
        \le\|g\|_{C^*_r\G}\|\xi\|_{\ell\G_x}
        \\&\le C\|g\|_{\ell,t}\|\xi\|_{\ell^2\G_x}
        =C\|f\|_{\E,\ell,t}\|\xi\|_{\ell^2\G_x}.
    \end{align*}    
    Taking suprema, the above inequality implies $\|f\|_{C^*_r(\G,\E)}\le C\|f\|_{\E,\ell,t}$.
\\\end{proof}

As mentioned in the introduction, one of the main motivations for studying rapid decay in the group setting was that it yields nice a nice subalgebra of the reduced group $C^*$-algebra.  
This was also shown to be the case for groupoids with (RD) in \cite{Hou2017}, and in the twisted setting, we obtain a similar result.
Let $S_\ell(\G,\E)$ denote the completion of $\ScGE$ with respect to the topology induced by the family of norms $\{\|\cdot\|_\infty\}\cup\{\|\cdot\|_{\E,\ell,t}:t\in\Z_{\ge0}\}$.  
With minor modifications, the proofs of Lemma 3.3 and Proposition 3.4 in \cite{Hou2017} yield the following result:

\begin{proposition}
    Let $\G$ be an \'etale groupoid, let $\ell$ be a length function on $\G$, and let $\E$ be a twist over $\G$.  
    Then $\G$ has $\E$-(RD) with respect to $\ell$ if and only if $S_\ell(\G,\E)\subset\CrGE$. 
    Moreover, if the length function $\ell$ is continuous, and $\G$ has $\E$-(RD) with respect to $\ell$, then $S_\ell(\G,\E)$ is a dense, Fr\'echet $*$-subalgebra of $\CrGE$. 
\end{proposition}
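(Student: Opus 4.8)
The plan is to handle the two assertions separately, using throughout a single observation about how elements of $S_\ell(\G,\E)$ are realized as functions and recovered from the regular representations. Since $\|\cdot\|_\infty$ is one of the seminorms defining the topology of $S_\ell(\G,\E)$, every element is a uniform limit of functions in $\ScGE$, hence is itself a continuous, $\T$-equivariant function on $\E$ for which all of the $\|\cdot\|_{\E,\ell,t}$ are finite; thus $S_\ell(\G,\E)$ embeds into $C(\E)$. Moreover, for $f\in\ScGE$, $x\in\Go$ and $\gamma,\eta\in\G_x$ one computes $\langle\lambda_x^\rho(f)\delta_\eta,\delta_\gamma\rangle=f(\rho(\gamma)\rho(\eta)^{-1})$, and taking $\eta=x$ (so that $\rho(x)\in i(\Go\times\T)$ is central) gives $|f(\rho(\gamma))|=|\langle\lambda_x^\rho(f)\delta_x,\delta_\gamma\rangle|$. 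Hence the family $(\lambda_x^\rho)_{x\in\Go}$ is faithful, and since matrix coefficients are $\|\cdot\|_{\CrGE}$-continuous, the function $f$ is determined by its image in $\CrGE$.

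For the biconditional, suppose first that $\G$ has $\E$-(RD), so $\|f\|_{\CrGE}\le C\|f\|_{\E,\ell,t}$ for all $f\in\ScGE$. As $\|\cdot\|_{\E,\ell,t}$ (for $t\in\Z_{\ge0}$) is a defining seminorm of $S_\ell(\G,\E)$, the inclusion $\ScGE\hookrightarrow\CrGE$ is continuous for the $S_\ell$-topology and extends to a continuous map $S_\ell(\G,\E)\to\CrGE$; the recovery above shows it is injective, so $S_\ell(\G,\E)\subset\CrGE$. Conversely, assume $S_\ell(\G,\E)\subset\CrGE$. Then $S_\ell(\G,\E)$ is a Fr\'echet space (its defining seminorms are countable and separate points because $\|\cdot\|_\infty$ is among them) and $\CrGE$ is Banach, so I would apply the closed graph theorem to the inclusion $\iota$. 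Its graph is closed: if $f_n\to f$ in $S_\ell(\G,\E)$ and $\iota(f_n)\to T$ in $\CrGE$, then $f_n\to f$ uniformly, so $\langle\iota(f_n)\delta_\eta,\delta_\gamma\rangle=f_n(\rho(\gamma)\rho(\eta)^{-1})\to f(\rho(\gamma)\rho(\eta)^{-1})$ while also $\langle\iota(f_n)\delta_\eta,\delta_\gamma\rangle\to\langle T\delta_\eta,\delta_\gamma\rangle$, whence $\iota(f)=T$ by faithfulness. Thus $\iota$ is continuous, i.e. $\|f\|_{\CrGE}\le C\max_{0\le j\le N}\|f\|_{\E,\ell,j}$ for some $C,N$; since $\|\cdot\|_{\E,\ell,j}$ increases in $j$ and dominates $\|\cdot\|_\infty$, this is precisely $\E$-(RD) with exponent $N$.

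For the second assertion, assume $\ell$ is continuous and $\G$ has $\E$-(RD). Density is immediate from $\ScGE\subset S_\ell(\G,\E)\subset\CrGE$ and the density of $\ScGE$ in $\CrGE$, and the Fr\'echet property was noted above. Closure under the involution is routine, as $\|f^*\|_\infty=\|f\|_\infty$ and $\|f^*\|_{\E,\ell,t}=\|f\|_{\E,\ell,t}$ by the symmetric definition of the latter, so $f\mapsto f^*$ is an isometry for every defining seminorm. The substance is closure under convolution, for which I would prove a submultiplicative estimate $\|f*g\|_{\E,\ell,t}\le C\|f\|_{\E,\ell,t+t_0}\|g\|_{\E,\ell,t+t_0}$ (with $t_0$ the (RD) exponent); this makes convolution jointly continuous for the $S_\ell$-topology, so it extends to $S_\ell(\G,\E)$ and agrees there with the product of $\CrGE$. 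To get the estimate, continuity of $\ell$ guarantees $f_t:=f\cdot(1+\ell\circ\pi)^t\in\ScGE$, and the preceding Lemma guarantees $F_t:=|f_t\circ\rho|\in\CcG$ (and similarly $G_t:=|g_t\circ\rho|$); subadditivity of $\ell$ then yields, for $\eta\in\G_x$,
\begin{align*}
    (1+\ell(\eta))^t\,|(f*g)(\rho(\eta))|\le\sum_{\beta\in\G_x}F_t(\eta\beta^{-1})\,G_t(\beta)=\left[\lambda_x(F_t)\bigl(G_t|_{\G_x}\bigr)\right](\eta),
\end{align*}
where $\lambda_x$ is the untwisted regular representation of $\CcG$. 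Summing the squares over $\eta$ and using $\|G_t|_{\G_x}\|_2=\|g\|_{\E,\ell,t,s,x}$ reduces the matter to bounding $\|\lambda_x(F_t)\|\le\|F_t\|_{C^*_r\G}$, after which the (RD) bound $\|F_t\|_{C^*_r\G}\le C\|F_t\|_{\ell,t_0}=C\|f\|_{\E,\ell,t+t_0,s}$ finishes it, the $*$-versions folding into $\|\cdot\|_{\E,\ell,t}$.

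I expect the main obstacle to be exactly this last operator-norm bound, and it is where the twisted setting genuinely departs from \cite{Hou2017}. Passing to the moduli $F_t,G_t\in\CcG$ appears to be forced: cancellation in the twisted convolution prevents a sharp $\ell^2$-estimate that retains the phases, so the controlling quantity is $\|\lambda_x(F_t)\|$, the \emph{untwisted} reduced norm of a nonnegative function. This is precisely what (RD) for $\G$ delivers, but note that it is the modulus norm $\|\lambda_x(F_t)\|$ — which, by the comparison underlying Proposition \ref{RDiERD}, is only \emph{larger} than the twisted norm bounded by $\E$-(RD). The careful point to record is therefore that this step rests on the (RD) estimate for $\G$ applied to the moduli, consistent with Proposition \ref{RDiERD} exhibiting (RD) as the stronger hypothesis that entails $\E$-(RD); verifying that the Haagerup-type inequality powering (RD) is available for the nonnegative functions $F_t$ is the crux of adapting the untwisted arguments to twists.
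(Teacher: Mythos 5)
Your proof of the ``if and only if'' assertion is correct, and it is the same route the paper takes (the paper defers to \cite[Lemma 3.3]{Hou2017}, whose argument is exactly your closed-graph/matrix-coefficient argument); there is nothing to object to there. The problem is in the ``moreover'' part, and it is precisely the point you flag but do not close: your convolution estimate terminates in the bound $\|\lambda_x(F_t)\|\le\|F_t\|_{C^*_r\G}\le C\|F_t\|_{\ell,t_0}$ for the modulus function $F_t=|f_t\circ\rho|\in\CcG$. That last inequality is \emph{untwisted} (RD) for $\G$, whereas the proposition assumes only $\E$-(RD). These are not interchangeable: entrywise domination of kernels gives $\|\lambda_x^\rho(h)\|\le\|\lambda_x(|h\circ\rho|)\|$, i.e.\ the twisted norm is the \emph{smaller} one, so the hypothesis $\|h\|_{\CrGE}\le C\|h\|_{\E,\ell,t_0}$ gives no control whatsoever on $\|\lambda_x(F_t)\|$, and Proposition \ref{RDiERD} goes in the wrong direction for you. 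As written, your argument proves the subalgebra statement under the stronger hypothesis (RD); calling the missing inequality ``the crux'' does not discharge it, so this is a genuine gap relative to the statement being proved.

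The gap is closable, because your claim that ``passing to the moduli is forced'' is wrong: one can decompose in the length variable instead of taking absolute values, and then $\E$-(RD) suffices. Choose continuous bumps $\phi_n:[0,\infty)\to[0,1]$ with $\supp\phi_n\subset[n-1,n+1]$ and $\sum_n\phi_n\equiv1$, and set $f^{(n)}=f\cdot(\phi_n\circ\ell\circ\pi)\in\ScGE$, $g^{(m)}=g\cdot(\phi_m\circ\ell\circ\pi)\in\ScGE$ (this is where continuity of $\ell$ enters). Since $f^{(n)}*g^{(m)}$ is supported where $\ell\le n+m+2$, the weight can be pulled out \emph{before} any operator-norm estimate, using $\|h\|_{\E,\ell,0,s,x}=\|\lambda_x^\rho(h)\delta_x\|_{\ell^2\G_x}$:
\begin{align*}
\|f^{(n)}*g^{(m)}\|_{\E,\ell,t,s,x}
&\le(n+m+3)^t\,\bigl\|\lambda_x^\rho(f^{(n)})\lambda_x^\rho(g^{(m)})\delta_x\bigr\|_{\ell^2\G_x}\\
&\le(n+m+3)^t\,\|f^{(n)}\|_{C^*_r(\G,\E)}\,\|g^{(m)}\|_{\E,\ell,0,s,x},
\end{align*}
and now $\E$-(RD) applies directly to the twisted element $f^{(n)}$, giving $\|f^{(n)}\|_{C^*_r(\G,\E)}\le C\|f^{(n)}\|_{\E,\ell,t_0}\le C(n+2)^{t_0}\|f^{(n)}\|_{\E,\ell,0}$. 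Since $1+\ell\ge n$ on $\supp f^{(n)}$ and $|f^{(n)}|\le|f|$ pointwise, one has $\|f^{(n)}\|_{\E,\ell,0}\le n^{-(t+t_0+2)}\|f\|_{\E,\ell,t+t_0+2}$ for $n\ge1$ (similarly for $g^{(m)}$), so summing over $n,m$ against $(n+m+3)^t\le(n+2)^t(m+2)^t$ leaves convergent series of order $\sum n^{-2}$ and yields $\|f*g\|_{\E,\ell,t}\le C_t\|f\|_{\E,\ell,t+t_0+2}\|g\|_{\E,\ell,t+t_0+2}$, the $*$-version following by applying the estimate to $g^**f^*$. This joint continuity is exactly what your outline was missing, and it uses only the stated hypothesis; it is also the substantive ``modification'' hiding behind the paper's one-line deferral to \cite{Hou2017}, where the moduli trick is harmless only because the untwisted hypothesis is the one assumed.
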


The proof of Theorem 4.2 in \cite{Hou2017} also generalizes to the this setting, and we obtain the following result.

\begin{proposition}
    Let $\G$ be an \'etale groupoid, let $\E$ be a twist over $\G$, and let $\ell$ be a continuous length function on $\G$.  
    If $\G$ has $\E$-(RD) with respect to $\ell$, then $S_\ell(\G,\E)$ is an inverse closed subalgebra of $\CrGE$, and the inclusion induces an isomorphism at the level of $K$-theory.
\end{proposition}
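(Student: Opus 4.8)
Throughout write $q_{t}(\cdot)=\|\cdot\|_{\E,\ell,t}$ for the Sobolev-type seminorms of Definition \ref{GrpdRDnorms}, and recall that the preceding proposition already presents $S_\ell(\G,\E)$ as a dense Fréchet $*$-subalgebra of $\CrGE$. The plan is to follow the argument for Theorem 4.2 of \cite{Hou2017}, separating the two assertions: (a) $S_\ell(\G,\E)$ is inverse closed in $\CrGE$, and (b) the inclusion induces a $K$-theory isomorphism. For (b) I would simply invoke the standard mechanism, also used in \cite{Hou2017}: a dense Fréchet $*$-subalgebra of a $C^*$-algebra that is inverse closed (equivalently, by completeness, stable under the holomorphic functional calculus) induces an isomorphism on $K$-theory. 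So the real content is (a), which I would obtain from a rapid-decay–powered estimate on the seminorms $q_t$ fed into a spectral-invariance criterion.

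The engine is submultiplicativity of the length weight: from $\ell(\alpha\beta)\le\ell(\alpha)+\ell(\beta)$ one gets $1+\ell(\gamma)\le(1+\ell(\alpha))+(1+\ell(\beta))$ for $\gamma=\alpha\beta$, hence for integers $t$
\[
(1+\ell(\gamma))^t\le\sum_{i+j=t}\binom{t}{i}(1+\ell(\alpha))^i(1+\ell(\beta))^j\qquad(\gamma=\alpha\beta).
\]
Fix a section $\rho$ normalized to carry units to units, fix $x\in\Go$, and let $e_x\in\ell^2\G_x$ be the indicator of the unit $x$. Then $[\lambda_x^\rho(f)e_x](\gamma)=f(\rho(\gamma))$, so that the fibrewise seminorm factors as $\|f\|_{\E,\ell,t,s,x}=\|M_t\lambda_x^\rho(f)e_x\|_2$, where $M_t$ is multiplication by $(1+\ell)^t$ on $\ell^2\G_x$. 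Using the $\T$-equivariance $|f(\rho(\gamma)\rho(\eta)^{-1})|=|f(\rho(\gamma\eta^{-1}))|$, I would dominate the kernel of $M_t\lambda_x^\rho(f)M_{-t}$ entrywise in modulus by that of the untwisted convolution operator $\lambda_x(w_t)$, where $w_t\in\CcG$ is the function $\gamma\mapsto(1+\ell(\gamma))^t\,|f(\rho(\gamma))|$. Domination of positive kernels then yields $\|M_t\lambda_x^\rho(f)M_{-t}\|\le\|\lambda_x(w_t)\|$, and, combined with the displayed binomial estimate, a Leibniz-type inequality of the shape $q_t(f*g)\le C_t\sum_{i+j=t}\sup_x\|\lambda_x(w_i)\|\,q_j(g)$ together with its adjoint mirror image.

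From such a differential-type inequality, inverse closedness follows along the classical route for discrete groups: for self-adjoint $a\in S_\ell(\G,\E)$ invertible in $\CrGE$ one controls each $q_t(a^{-1})$ through a resolvent/Neumann expansion whose convergence in every $q_t$ is guaranteed by the inequality, and the general case reduces to the self-adjoint one via $a^{*}a$. Equivalently, once the $q_t$ are organized into a differential seminorm one may quote the corresponding criterion for stability under the holomorphic functional calculus. Granting (a), assertion (b) is then the density theorem cited above.

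The main obstacle is exactly the passage highlighted in the second paragraph. Because the weight $(1+\ell)$ is real, every estimate of a Sobolev seminorm of a product forces passage to the modulus $|f(\rho(\cdot))|\in\CcG$, and hence to the \emph{untwisted} convolution operators $\lambda_x(w_i)$; one must therefore convert the $\E$-twisted rapid-decay hypothesis into uniform-in-$x$ control of $\|\lambda_x(w_i)\|$ by the twisted seminorms $q_{i+t_0}(f)$. This is the twisted analogue of the way untwisted (RD) enters \cite{Hou2017}, and the reduction to the modulus $|f(\rho(\cdot))|$ via $\T$-equivariance is precisely the device used in the proof of Proposition \ref{RDiERD}. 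Verifying that the resulting constants and degree shifts are uniform in $x\in\Go$, so that the fibrewise bounds assemble into genuine estimates for the $\CrGE$-norm, is the delicate bookkeeping, and is where continuity of $\ell$ and the local triviality of the twist $\E$ are used.
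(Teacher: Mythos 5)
Your overall strategy is the same as the paper's: the paper gives no argument beyond the remark that the proof of Theorem 4.2 of \cite{Hou2017} generalizes to the twisted setting, and your plan is exactly to carry out that generalization (Leibniz-type estimates from subadditivity, spectral invariance, then the density theorem for $K$-theory). Several individual steps are fine: the identity $\|f\|_{\E,\ell,t,s,x}=\|M_t\lambda_x^\rho(f)e_x\|_{2}$ for a unit-preserving section, the binomial splitting, the domination $\|M_t\lambda_x^\rho(f)M_{-t}\|\le\|\lambda_x(w_t)\|$, the reduction of the $K$-theory claim to inverse closedness, and the reduction to self-adjoint elements. The fatal problem is the step you postpone as ``delicate bookkeeping'': converting the $\E$-(RD) hypothesis into bounds $\sup_x\|\lambda_x(w_i)\|\le C\,q_{i+t_0}(f)$ on the \emph{untwisted} operators. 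This is not bookkeeping; it is where the argument breaks. Entrywise kernel domination only ever gives $\|\lambda_x^\rho(g)\|\le\|\lambda_x(|g\circ\rho|)\|$ --- a positive kernel dominates every kernel with the same moduli, never the reverse --- so the hypothesis $\|g\|_{\CrGE}\le C\|g\|_{\E,\ell,t_0}$ gives no control at all on $\|\lambda_x(w_i)\|$. A uniform bound of the kind you need, valid for all $f\in\ScGE$, is literally untwisted (RD) for $\G$ (since (RD) for nonnegative functions implies (RD), the seminorms seeing only moduli). But the paper proves only (RD)$\Rightarrow\E$-(RD) (Proposition \ref{RDiERD}), never the converse; for principal groupoids the converse is the content of Theorems \ref{princRD} and \ref{topprincRD}, obtained via polynomial growth, and for general \'etale groupoids no such implication is available. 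Your remark that this reduction ``is precisely the device used in the proof of Proposition \ref{RDiERD}'' has the logic backwards: there the untwisted norm is the \emph{hypothesis} and sits on the large side of the domination. As written, your argument establishes the proposition only under the stronger assumption that $\G$ has untwisted (RD); since the paper's own proof is a one-line deferral to \cite{Hou2017}, one cannot tell from the paper how (or whether) it resolves this point, but your proposal does not.

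There is a second, independent gap: even granting Sobolev-norm control of $\sup_x\|\lambda_x(w_i)\|$, inverse closedness does not ``follow along the classical route'' from the displayed Leibniz inequality by a Neumann expansion. In the iteration $q_t(f^n)\le C_t\sum_{i+j=t}\|w_i\|_{C^*_r\G}\,q_j(f^{n-1})$, the coefficient multiplying $q_t(f^{n-1})$ is $C_t\|\,|f\circ\rho|\,\|_{C^*_r\G}$, a quantity controlled only by seminorms of $f$ and not by $\|f\|_{\CrGE}$; one therefore gets $\limsup_n q_t(f^n)^{1/n}$ bounded by the maximum of $\|f\|_{\CrGE}$ and a Sobolev-type constant, which can exceed $1$ while $\|f\|_{\CrGE}<1$, so the series $\sum_n q_t(b^n)$ for $\|b\|_{\CrGE}<1$ need not converge. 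For the same reason the differential-seminorm criterion you allude to does not apply as stated: it requires the zeroth seminorm to be the ambient $C^*$-norm, whereas your $q_0$ is only dominated by it. The classical proofs supply an additional mechanism that your sketch omits --- Jolissaint's truncation of $a$ into a part supported on a ball of radius $R$ (whose powers have support growing linearly, hence seminorms bounded by polynomials times powers of the $C^*$-norm) plus a tail that is small in every seminorm and hence, by (RD), small in operator norm; or equivalently the derivation picture with $\delta=[D,\cdot]$, $D$ multiplication by $1+\ell$ --- and both mechanisms again consume (RD) applied to moduli, i.e.\ the untwisted hypothesis. So two ingredients are missing: a genuine idea for the twisted-to-untwisted passage (or an explicitly strengthened hypothesis), and the truncation (or smooth-domain) argument that actually drives the spectral invariance.
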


We end this section by giving another class of groupoids with length functions for which property (RD) holds.  
Let $\G$ be an \'etale groupoid, and let $\ell$ be a length on $\G$.  
We say that $\G$ has {\em polynomial growth} with respect to $\ell$ if there exist constants $C,n>0$ such that $|B_\ell(x,r)|\le C(1+r)^n$ for all $x\in\Go$ and $r>0$.

\begin{proposition}[{\cite[Proposition 3.5]{Hou2017}}]
    Let $\G$ be an \'etale groupoid, and let $\ell$ be a length function on $\G$.  
    If $\G$ has polynomial growth with respect to $\ell$, then $\G$ has property (RD) with respect to $\ell$, and hence has $\E$-(RD) for all twists $\E$ over $\G$.
\end{proposition}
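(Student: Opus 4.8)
The plan is to prove directly that polynomial growth implies property (RD); the final clause of the statement then follows immediately from Proposition \ref{RDiERD}. By Example \ref{trivtwist} it suffices to bound the reduced norm $\|f\|_{C^*_r\G}=\sup_{x\in\Go}\|\lambda_x(f)\|_{\bbB(\ell^2\G_x)}$ for $f\in\CcG$, and since this is a supremum over the unit space I would fix a single $x\in\Go$, a vector $\xi\in\ell^2\G_x$, and estimate $\|\lambda_x(f)\xi\|_{\ell^2\G_x}$ with all constants independent of $x$. Let $n$ denote the degree of polynomial growth, and fix once and for all an exponent $t$ with $2t>n+1$; this $t$ is the one appearing in the final inequality. (All sums below are finite or absolutely convergent: $f$ is compactly supported and fibres are discrete, while polynomial growth guarantees convergence of the infinite sums that appear.)

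The first key step is a single weighted Cauchy--Schwarz estimate. For $\gamma\in\G_x$ I write $[\lambda_x(f)\xi](\gamma)=\sum_{\eta\in\G_x}f(\gamma\eta^{-1})\xi(\eta)$ and split each summand by inserting the factor $(1+\ell(\gamma\eta^{-1}))^{t}(1+\ell(\gamma\eta^{-1}))^{-t}$. Cauchy--Schwarz in $\eta$ then gives
\[
|[\lambda_x(f)\xi](\gamma)|^2 \le \Big(\sum_{\eta\in\G_x}|f(\gamma\eta^{-1})|^2(1+\ell(\gamma\eta^{-1}))^{2t}\Big)\Big(\sum_{\eta\in\G_x}(1+\ell(\gamma\eta^{-1}))^{-2t}|\xi(\eta)|^2\Big).
\]
The point of arranging the weight around the ``distance'' $\ell(\gamma\eta^{-1})$ is that the first factor reindexes cleanly: the map $\eta\mapsto\gamma\eta^{-1}$ is a bijection of $\G_x$ onto the range fibre $\G^{r(\gamma)}$, and combined with inverse-invariance of $\ell$ and the formula $f^*(\omega)=\overline{f(\omega^{-1})}$ it identifies the first factor with $\|f^*\|_{\ell,t,s,r(\gamma)}^2\le\|f^*\|_{\ell,t,s}^2\le\|f\|_{\ell,t}^2$. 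Crucially this bound is independent of both $\gamma$ and $x$, which is precisely what sidesteps the ``sup-over-fibres versus sum'' mismatch that would obstruct a naive sphere-by-sphere decomposition of $f$.

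The second key step is the summation together with the growth bound. Summing the display over $\gamma\in\G_x$ and interchanging the two sums yields
\[
\|\lambda_x(f)\xi\|_{\ell^2\G_x}^2 \le \|f\|_{\ell,t}^2\sum_{\eta\in\G_x}|\xi(\eta)|^2\sum_{\gamma\in\G_x}(1+\ell(\gamma\eta^{-1}))^{-2t}.
\]
For fixed $\eta$ the map $\gamma\mapsto\gamma\eta^{-1}$ is a bijection of $\G_x$ onto $\G_{r(\eta)}$, so the inner sum equals $\sum_{\zeta\in\G_{r(\eta)}}(1+\ell(\zeta))^{-2t}$. Here polynomial growth enters: decomposing $\G_{r(\eta)}$ into the annuli $\{m\le\ell(\zeta)<m+1\}$, whose cardinalities are at most $|B_\ell(r(\eta),m+1)|\le C(2+m)^n$, bounds the inner sum by a constant multiple of $\sum_{m\ge0}(1+m)^{n-2t}$, which converges exactly because $2t>n+1$ and equals a finite constant $D$ independent of $\eta$ and of $x$. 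Hence $\|\lambda_x(f)\xi\|^2\le D\|f\|_{\ell,t}^2\|\xi\|^2$ for every $x$ and $\xi$, so $\|f\|_{C^*_r\G}\le\sqrt D\,\|f\|_{\ell,t}$, which is property (RD) with this $t$ and $C=\sqrt D$.

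The step I expect to demand the most care is the uniformity across the unit space: every constant must be independent of $x\in\Go$, even though both reindexing bijections transport data between source and range fibres based at different points $r(\gamma)$ and $r(\eta)$. The geometric input that makes this uniformity automatic is that polynomial growth is assumed with a single pair of constants $C,n$ valid at every base point, so the convergent tail $\sum_{m\ge0}(1+m)^{n-2t}$ controls every fibre at once; the only other ingredient is the inverse-invariance of $\ell$, which lets the range-fibre sum in the first step be rewritten as a genuine $\|f^*\|$-seminorm and the source-fibre sum in the second step be compared directly with the balls $B_\ell(\cdot,\cdot)$ appearing in the growth hypothesis.
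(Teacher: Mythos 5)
Your proof is correct. Note that the paper does not supply its own proof of this proposition: it is imported wholesale from \cite{Hou2017} (Proposition 3.5 there), and only the final clause --- that (RD) implies $\E$-(RD) for every twist --- is an addition of the paper, justified via Proposition \ref{RDiERD} exactly as you do. Your weighted Cauchy--Schwarz argument is the classical Haagerup--Jolissaint estimate underlying the cited result, and all steps check out: the two reindexing bijections $\eta\mapsto\gamma\eta^{-1}$ (onto $\G^{r(\gamma)}$) and $\gamma\mapsto\gamma\eta^{-1}$ (onto $\G_{r(\eta)}$) are valid, the identification of the range-fibre sum with $\|f^*\|_{\ell,t,s,r(\gamma)}^2$ is precisely why the symmetrized seminorm $\|f\|_{\ell,t}=\max\{\|f\|_{\ell,t,s},\|f^*\|_{\ell,t,s}\}$ suffices, the interchange of sums is legitimate since all terms are nonnegative, and the uniformity in $x$ follows, as you say, from the single pair of growth constants $(C,n)$ valid at every unit, with $2t>n+1$ guaranteeing convergence of the annular sum.
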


In \cite{Hou2017}, all examples of groupoids with rapid decay had polynomial growth with respect to the given length function.  
In the next section, we shall see that under the assumption that the groupoid is principal, this is about as much as one can expect.

\section{Principal Groupoids}

Recall that a groupoid $\G$ is called principal if $\operatorname{Iso}(\G)=\Go$, or equivalently, if the map $s\times r:\G\to\Go\times\Go$ is injective. 
As a topological analogue of principality, a topological groupoid $\G$ is said to be \textit{topologically principal} if the set of units $x\in\Go$ such that $\G_x^x=\{x\}$ is dense in $\Go$.  
In this section, we show that (topologically) principal groupoids admit property (RD) only when the length function has polynomial growth, and some continuity conditions.  
This generalizes some known results, see \cite{Chen2003}, \cite{JiYu2020}.  
Our strategy is inspired by the proof of Theorem 2.1 of \cite{Chen2003}, but details require attention in this more general setting.

\begin{lemma}\label{mainlemma}
    Let $\G$ be an \'etale groupoid, and let $\ell$ be a continuous length function on $\G$. 
    Suppose that for every $C>0$, $D>0$, there exist $R>0$, $x\in\Go$, and a finite set $F\subset B(x,R)$, such that 
    \begin{enumerate}[label = (\roman*)]
        \item $|F|>C(1+R)^D$, and
        \item the restriction of the range map to $F$ is injective,
    \end{enumerate}
    Then $\G$ does not have $\E$-(RD) with respect to $\ell$ for any twist $\E$ over $\G$. 
\end{lemma}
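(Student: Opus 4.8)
The plan is to argue by contradiction through an explicit family of test functions, adapting the all‑ones‑matrix trick from the metric‑space setting of \cite{Chen2003}. Suppose $\G$ had $\E$-(RD) with constants $C,t\ge0$. Set $D=2t+1$ and $C_0=C^2 4^t$, and apply the hypothesis to obtain $R>0$, $x\in\Go$, and a finite set $F=\{\gamma_1,\dots,\gamma_n\}\subset B(x,R)$ with $n=|F|>C_0(1+R)^{2t+1}$ and $r|_F$ injective. The test function will be a continuous, $\T$-equivariant representative of the indicator of the lifts of the ``difference set'' $FF^{-1}=\{\gamma_i\gamma_j^{-1}:1\le i,j\le n\}$. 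Each such product is defined (all $\gamma_k$ share source $x$), satisfies $\ell(\gamma_i\gamma_j^{-1})\le 2R$ by subadditivity, and, since $r|_F$ is injective, the assignment $(i,j)\mapsto\gamma_i\gamma_j^{-1}$ is injective (its source–range pair is $(r(\gamma_j),r(\gamma_i))$). Choosing pairwise disjoint open bisections $N_{ij}\ni\gamma_i\gamma_j^{-1}$, small enough that $\ell<2R+1$ on each, that the source projections $s(N_{ij})$ are disjoint for distinct $j$, and that the range projections $r(N_{ij})$ are disjoint for distinct $i$ (possible since the $r(\gamma_k)$ are distinct points of a Hausdorff space), I build $f\in\ScGE$ with $|f|\le1$, supported in $\bigcup_{ij}\pi^{-1}(N_{ij})$, and $f(\rho(\gamma_i\gamma_j^{-1}))=1$ for all $i,j$.

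For the lower bound on the $C^*$-norm, I exploit that the reduced norm and the seminorms of Definition \ref{GrpdRDnorms} are independent of the chosen section (the former up to unitary equivalence, the latter because only $|f\circ\rho|$ enters). Hence I may fix a section $\rho$ with $\rho(\gamma_i\gamma_j^{-1})=\rho(\gamma_i)\rho(\gamma_j)^{-1}$, a legitimate choice because $(i,j)\mapsto\gamma_i\gamma_j^{-1}$ is injective. Put $y_0=r(\gamma_1)$; then the $n$ distinct elements $\alpha_i:=\gamma_i\gamma_1^{-1}$ all lie in $\G_{y_0}$, and using that $\rho(\gamma_1)^{-1}\rho(\gamma_1)$ is the unit over $x$ one computes $\rho(\alpha_i)\rho(\alpha_j)^{-1}=\rho(\gamma_i)\rho(\gamma_j)^{-1}$, so that the matrix coefficient $\langle\lambda_{y_0}^\rho(f)\delta_{\alpha_j},\delta_{\alpha_i}\rangle=f(\rho(\gamma_i)\rho(\gamma_j)^{-1})=1$. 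Thus the compression of $\lambda_{y_0}^\rho(f)$ to $\operatorname{span}\{\delta_{\alpha_i}:1\le i\le n\}$ is the $n\times n$ all‑ones matrix, whose norm is $n$; therefore $\|f\|_{C^*_r(\G,\E)}\ge\|\lambda_{y_0}^\rho(f)\|\ge n=|F|$.

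For the upper bound on $\|f\|_{\E,\ell,t}$, the point is that injectivity of $r|_F$ forces each source fibre to meet $\supp f$ in at most $n$ points: if $\delta\in\G_y$ lies in some $N_{ij}$ then $s(\delta)=y$ forces $r(\gamma_j)=y$ for a unique admissible $j$ (by disjointness of the $s(N_{ij})$ in $j$), leaving only the $n$ choices of $i$. Since $\ell<2R+1$ on the support and $|f|\le1$, this gives $\|f\|_{\E,\ell,t,s,y}^2\le n(2+2R)^{2t}$ for every $y$, hence $\|f\|_{\E,\ell,t,s}\le\sqrt{n}\,2^t(1+R)^t$; the same bound holds for $f^*$, since $\supp f^*=(\supp f)^{-1}$ consists of lifts of $(FF^{-1})^{-1}=FF^{-1}$ and the identical fibrewise count applies, now separating in $i$. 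Combining the two estimates, $\E$-(RD) would yield $|F|\le C\sqrt{|F|}\,2^t(1+R)^t$, i.e. $|F|\le C^2 4^t(1+R)^{2t}=C_0(1+R)^{2t}<C_0(1+R)^{2t+1}$, contradicting $n>C_0(1+R)^{2t+1}$.

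The main obstacle is the twist: a priori the entries of the compressed operator are only unimodular, and an arbitrary unimodular $n\times n$ matrix need not have large norm. Overcoming this is exactly where the freedom to choose the section $\rho$ adapted to $F$, together with the section‑independence of both sides of the (RD) inequality, is essential, since it cancels the $\T$-phases and restores the genuine all‑ones matrix. The only remaining care is the continuity and compact‑support construction of $f$ and the arrangement of the bisections $N_{ij}$ so that the fibrewise support count stays at $n$ rather than $n^2$; this is precisely where hypothesis (ii) and the Hausdorff separation of the finitely many units $r(\gamma_k)$ enter.
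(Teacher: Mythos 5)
Your strategy is the same as the paper's: a test function supported on small bisection neighborhoods of the difference set $FF^{-1}$, phase-normalized so that a compression of a regular representation becomes the $|F|\times|F|$ all-ones matrix (giving $\|f\|_{C^*_r(\G,\E)}\ge|F|$), together with the observation that hypothesis (ii) and the bisection/disjointness arrangement force every source fibre to meet the support in at most $|F|$ points (giving $\|f\|_{\E,\ell,t}\le|F|^{1/2}\,2^t(1+R)^t$), and then the polynomial comparison. The upper-bound half and the final arithmetic in your write-up are correct and match the paper's.

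The one place you diverge from the paper, the handling of the twist phases, is also the one step whose justification is incomplete as written. You fix a section $\rho$ with $\rho(\gamma_i\gamma_j^{-1})=\rho(\gamma_i)\rho(\gamma_j)^{-1}$ and call this ``legitimate because $(i,j)\mapsto\gamma_i\gamma_j^{-1}$ is injective.'' Injectivity only rules out two \emph{different} pairs prescribing two values at the same element of $FF^{-1}$; it does not address the circular case in which $\gamma_i\gamma_j^{-1}$ itself lies in $F$ (or in $\Go$), so that its $\rho$-value is simultaneously a free choice and a prescribed product. This case is not hypothetical: in the lemma's applications one takes $F=B_\ell(x,R)$, which contains the unit $x$, whence $F\subset FF^{-1}$. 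The repair is short and uses hypothesis (ii) once more: if $\gamma_i\gamma_j^{-1}=\gamma_k\in F$, then $r(\gamma_k)=r(\gamma_i)$ forces $\gamma_k=\gamma_i$, hence $\gamma_j=x$; and $\gamma_i\gamma_j^{-1}\in\Go$ forces $i=j$. So the only overlaps are the tautological prescriptions $\rho(\gamma_i)=\rho(\gamma_i)\rho(x)^{-1}$ and $\rho(r(\gamma_i))=\rho(\gamma_i)\rho(\gamma_i)^{-1}$, which hold as soon as $\rho$ is normalized to send units to units. Note that the paper sidesteps this entirely: it leaves $\rho$ arbitrary and builds the correcting phases $[\rho_g(g),\rho(\gamma_i)\rho(\gamma_j)^{-1}]$ into the definition of $f$, i.e.\ it demands $f(\rho(\gamma_i)\rho(\gamma_j)^{-1})=1$ rather than $f(\rho(\gamma_i\gamma_j^{-1}))=1$; that prescription is consistent by the same injectivity (the products $\rho(\gamma_i)\rho(\gamma_j)^{-1}$ lie in distinct fibres of $\pi$), and it makes the compression of $\lambda^{\rho}_{x}(f)$ to $\operatorname{span}\{\delta_{\gamma_i}\}\subset\ell^2\G_{x}$ the all-ones matrix directly, which also renders your passage to the fibre at $y_0=r(\gamma_1)$ unnecessary. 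With the overlap case addressed (or with the paper's device substituted), your argument is complete.
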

\begin{proof}
    Fix $C,t>0$.  
    By assumption, there exist $R>0$, $x_0\in\Go$, and a finite set $F\subset B(x_0,r)$ such that $r\mid_F$ is injective and 
    $$|F|>C^2(1+R)^{6t}.$$
    Without loss of generality, we may assume that $x_0\in F$. 
    For each $\gamma\in F$, fix a pre-compact open bisection neighborhood $W_\gamma$ of $\gamma$ such that 
    \begin{itemize}
        \item $W_{x_0}\subset\Go$,
        \item $s(W_\gamma)\subset W_{x_0}$ for all $\gamma\in F$, and 
        \item the collection $\{r(W_\gamma):\gamma\in F\}$ of subsets of $\Go$ is pairwise disjoint. 
    \end{itemize}
    Put $Z=FF^{-1}$.  
    For each $g\in Z$, there exist (unique) $\gamma_1,\gamma_2\in F$ such that $g=\gamma_1\gamma_2^{-1}$.  
    Define an open neighborhood $V_g$ of $g$ as follows:  If $\gamma_1=\gamma_2$ we set $V_g=r(W_{\gamma_1})$, and otherwise $V_g=W_{\gamma_1}W_{\gamma_2}^{-1}$.
    Now fix a pre-compact open bisection neighborhood $U_g$ of $g$ such that $\overline U_g\subset V_g$, and such that $|\ell(\gamma)-\ell(g)|<R$ for all $\gamma\in U_g$.  
    \\\indent Let $\E\overset{\pi}{\to}\G$ be a twist, and let $\rho:\G\to\E$ be a (not necessarily continuous) section for $\pi$.  
    Then we may assume that for each $g\in Z$, the open bisection $U_g$ admits a continuous section $\rho_g:\G\to\E$ for $\pi$, such that the map
    \begin{align*}
        \Psi_g:U_g\times\T\to\pi^{-1}(U_g),\qquad(\gamma,z)\mapsto z\cdot\rho_g(\gamma)
    \end{align*}
    is a homeomorphism.
    For each $g\in Z$, fix a function $\tilde{f}_g\in\CcG$ such that $0\le\tilde{f}_g(\gamma)\le1$ for all $\gamma\in\G$, with $\tilde{f}_g(g)=1$ and $\supp(\tilde{f}_g)\subset U_g$.
    Define $f_g\in\ScGE$ for $g\in Z$ to be the function such that $\supp(f_g)\subset\pi^{-1}(U_g)$, and 
    \begin{align*}
        f_g(\rho_g(\gamma))
        =[\rho_g(g),\rho(\gamma_1)\rho(\gamma_2)^{-1}]\tilde{f}_g(\gamma),
    \end{align*}
    whenever $\gamma\in U_g$, where $\gamma_1$ and $\gamma_2$ are the (unique) elements of $F$ such that $g=\gamma_1\gamma_2^{-1}$.  
    Now let $\xi=|F|^{-1/2}\chi_F\in\ell^2\G_{x_0}$, where $\chi_F$ denotes the indicator function for the set $F\subset\G_{x_0}$.  
    If $\gamma\in\G_{x_0}$, then $[\lambda_x^\rho(f)\xi](\gamma)=0$ unless $\gamma\in\F$.  
    In this case, we set $Z^\gamma=Z\cap\G^{r(\gamma)}$, and we have 
    \begin{align*}
        [\lambda_x^\rho(f)\xi](\gamma)
        &=|F|^{-1/2}\sum_{\eta\in F}f(\rho(\gamma)\rho(\eta)^{-1})\\
        &=|F|^{-1/2}\sum_{g\in Z^\gamma}f_g(\rho(\gamma)\rho(g^{-1}\gamma)^{-1})\\
        &=|F|^{-1/2}\sum_{g\in Z^\gamma}\tilde{f}_g(g)\\
        &=|F|^{1/2}.
    \end{align*}
    Squaring and summing over $\gamma\in\G_{x_0}$, we obtain $\|\lambda_x^\rho(f)\xi\|_{\ell^2\G_{x_0}}^2=|F|^2$, so $\|f\|_{C^*_r(\G,\E)}\ge|F|$.
    \\\indent Next we estimate $\|f\|_{\E,\ell,t}$. 
    We claim that, for all $x\in\Go$ we have
    \begin{align*}
        \left|\bigcup_{g\in Z}\G_x\cap U_g\right|\leq|F|
    \end{align*}
    To see this, first note that if $g\in Z$, then $s(U_g)\subset r(W_\eta)$ for some $\eta\in F$.  
    Hence if $x\in\Go$ and $x\notin r(W_\eta)$ for all $\eta\in F$, then $\bigcup_{g\in Z}\G_x\cap U_g=\varnothing$, and the estimate holds.  
    Now suppose $x\in r(W_\eta)$ for some (necessarily unique) $\eta\in F$.  
    If $g\in Z$ and $\G_x\cap U_g$ is nonempty, then $g=\gamma\eta^{-1}$ for some $\gamma\in F$, so
    \begin{align*}
        \left(\bigcup_{g\in Z}\G_x\cap U_g\right)\subset\left(\bigcup_{\gamma\in F}\G_x\cap U_{\gamma\eta^{-1}}\right).
    \end{align*}
    As each $U_g$ is a bisection,  the cardinality of the set on the right-hand side is clearly no more than $|F|$.
    This proves the claim.   
    Now for $x\in\Go$, the claim implies 
    \begin{align*}
        \|f\|_{\E,\ell,t,s,x}^2
        &=\sum_{\gamma\in\G_x}|f(\gamma)|^2(1+\ell(\gamma))^{2t}\\
        &\leq\sum_{\gamma\in\cup_{g\in Z}(\G_x\cap U_g)}(1+\ell(\gamma))^{2t}\\
        &<|F|(1+2R)^{2t}.
    \end{align*}    
    Similarly, we obtain $\|f^*\|^2_{\E,\ell,t,s,x}<|F|(1+2R)^{2t}$, and it follows that 
    \begin{align*}
        \|f\|_{\E,\ell,t}\le|F|^{1/2}(1+2R)^t.
    \end{align*}
    Combining our estimates, we obtain
    \begin{align*}
        \|f\|_{C^*_r(\G,\E)}
        \ge|F|
        >C|F|^{1/2}(1+R)^{2t}
        \ge C|F|^{1/2}(1+2R)^{t}\
        \ge C\cdot\|f\|_{\E,\ell,t}.
    \end{align*}
    Since $C,t>0$ were arbitrary, it follows that $\G$ does not have $\E$-(RD) with respect to the length $\ell$.
\\\end{proof}

\begin{theorem}\label{princRD}
    Let $\G$ be a principal, \'etale groupoid, and let $\ell$ be a continuous length function on $\G$.  
    The following are equivalent:
    \begin{enumerate}
        \item $\G$ has polynomial growth with respect to $\ell$.
        \item $\G$ has $\E$-(RD) with respect to $\ell$ for all twists $\E$ over $\G$.
        \item $\G$ has $\E$-(RD) with respect to $\ell$ for some twist $\E$ over $\G$.
    \end{enumerate}
\end{theorem}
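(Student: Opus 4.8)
The plan is to prove the cycle $(1)\Rightarrow(2)\Rightarrow(3)\Rightarrow(1)$. The implication $(1)\Rightarrow(2)$ is exactly the polynomial-growth proposition recalled above (\cite[Proposition 3.5]{Hou2017}), which states that polynomial growth yields property (RD) and hence $\E$-(RD) for every twist. The implication $(2)\Rightarrow(3)$ is trivial, since there is always at least one twist over $\G$, namely the trivial twist $\G\times\T$ of Example \ref{trivtwist}. Thus the entire content of the theorem is the implication $(3)\Rightarrow(1)$, and this is where the principality hypothesis enters.

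To prove $(3)\Rightarrow(1)$, I would argue by contraposition, using Lemma \ref{mainlemma}. Assume $\G$ does \emph{not} have polynomial growth; then for every $C,D>0$ there exist $x\in\Go$ and $r>0$ with $|B_\ell(x,r)|>C(1+r)^D$. I claim that, setting $R=r$ and $F=B_\ell(x,R)$, the pair $(R,F)$ verifies hypotheses (i) and (ii) of Lemma \ref{mainlemma}. Hypothesis (i), the cardinality bound $|F|>C(1+R)^D$, holds by the choice of $x$ and $r$. For hypothesis (ii), the injectivity of the range map on $F$, this is precisely where principality is used: since $\G$ is principal, the map $s\times r$ is injective, so if $\gamma_1,\gamma_2\in\G_x$ satisfy $r(\gamma_1)=r(\gamma_2)$, then $(s\times r)(\gamma_1)=(x,r(\gamma_1))=(x,r(\gamma_2))=(s\times r)(\gamma_2)$, forcing $\gamma_1=\gamma_2$. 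Thus the range map is injective on the whole source fibre $\G_x$, hence on $F\subset\G_x$. Since these hypotheses hold for every $C,D>0$, Lemma \ref{mainlemma} concludes that $\G$ has $\E$-(RD) for \emph{no} twist $\E$, contradicting (3). Hence (3) forces polynomial growth.

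The only genuine obstacle is the observation that principality automatically supplies hypothesis (ii) of Lemma \ref{mainlemma}: once one recognizes that injectivity of $s\times r$ forces the range map to be injective on each source fibre, every metric ball $B_\ell(x,R)\subset\G_x$ becomes an admissible choice of $F$, and the quantitative failure of polynomial growth directly provides the cardinality bound (i). Everything else---matching the ``for all $C,D$'' quantifiers between the definition of polynomial growth and the hypothesis of Lemma \ref{mainlemma}, and noting that $F=B_\ell(x,R)$ trivially lies in $B(x,R)$---is routine bookkeeping, the real analytic work having already been carried out in the proof of Lemma \ref{mainlemma}.
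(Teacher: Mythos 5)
Your proposal is correct and follows essentially the same route as the paper: the implications $(1)\Rightarrow(2)\Rightarrow(3)$ are handled by citing \cite[Proposition 3.5]{Hou2017} together with Proposition \ref{RDiERD} and the existence of the trivial twist, and $(3)\Rightarrow(1)$ is proved by contraposition via Lemma \ref{mainlemma} with $F=B_\ell(x,r)$. The only difference is cosmetic: you spell out why principality makes the range map injective on each source fibre (hence on $F$), a point the paper's proof leaves implicit.
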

\begin{proof}
    If $\G$ has polynomial growth, then by \cite[Proposition 3.5]{Hou2017} $\G$ has (RD) with respect to $\ell$.  
    Proposition \ref{RDiERD} now implies that $\G$ has $\E$-(RD) with respect to $\ell$ for any twist $\E$ over $\G$.  
    Thus (1) implies (2).  
    Obviously, (2) implies (3), so we focus on showing (3) implies (1). 
    We prove the contrapositive, so assume that $\G$ does not have polynomial growth with respect to $\ell$.  
    Then for each $C,D>0$ there exist $x\in\Go$ and $r>0$ such that $|B(x,r)|>C(1+r)^D$.  Letting $F=B(x,r)$, we see that $F$ satisfies conditions (i) and (ii) of Lemma \ref{mainlemma}, and therefore $\G$ cannot have property $\E$-(RD) with respect to $\ell$ for any twist, and therefore (3) does not hold.
\\\end{proof}

A mild adjustment of our assumptions allows us to apply the local slice lemma, and we obtain the following.

\begin{theorem}\label{topprincRD}
    Let $\G$ be a $\sigma$-compact, topologically principal, \'etale groupoid, and let $\ell$ be a continuous and proper length function on $\G$.  
    The following are equivalent:
    \begin{enumerate}
        \item $\G$ has polynomial growth with respect to $\ell$.
        \item $\G$ has $\E$-(RD) with respect to $\ell$ for all twists $\E$ over $\G$.
        \item $\G$ has $\E$-(RD) with respect to $\ell$ for some twist $\E$ over $\G$.
    \end{enumerate}
\end{theorem}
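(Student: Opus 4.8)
The plan is to follow the same three-implication scheme as in Theorem \ref{princRD}, observing that $(1)\Rightarrow(2)\Rightarrow(3)$ requires nothing beyond what is already in place: polynomial growth gives property (RD) by \cite[Proposition 3.5]{Hou2017}, Proposition \ref{RDiERD} upgrades this to $\E$-(RD) for every twist, and $(2)\Rightarrow(3)$ is trivial. Thus the whole content is $(3)\Rightarrow(1)$, which I would prove in contrapositive form by verifying the hypotheses of Lemma \ref{mainlemma}. The difficulty relative to the principal case is that the finite set $F$ produced there must have the range map injective on it; in the principal setting one simply takes $F=B_\ell(x,R)$, because injectivity of $s\times r$ forces $r$ to be injective on every source fibre. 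When $\G$ is only topologically principal this can fail precisely over the units where growth is large, so the large ball must be transported to a nearby fibre on which $r$ is injective.

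Concretely, I would assume $\G$ does not have polynomial growth and fix $C,D>0$. By the negation of polynomial growth there are $x_0\in\Go$ and $R>0$ with $|B_\ell(x_0,R)|>C(1+R)^D$. Here the extra hypotheses enter: since $\G$ is $\sigma$-compact and $\ell$ is continuous and proper, Corollary \ref{SameCards} (itself a consequence of the local slice Lemma \ref{LocSliceLemma}) yields an open neighbourhood $V\subset\Go$ of $x_0$ with $|B_\ell(y,R)|=|B_\ell(x_0,R)|$ for every $y\in V$. Because $V$ is a nonempty open subset of $\Go$ and $\G$ is topologically principal, the dense set of units with trivial isotropy meets $V$, so I may choose $y\in V$ with $\G_y^y=\{y\}$.

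It then remains to observe that trivial isotropy at $y$ makes the range map injective on the whole source fibre $\G_y$: if $\gamma_1,\gamma_2\in\G_y$ satisfy $r(\gamma_1)=r(\gamma_2)$, then $\gamma_2^{-1}\gamma_1$ is a well-defined element of $\G_y^y=\{y\}$, whence $\gamma_1=\gamma_2$. Setting $F=B_\ell(y,R)\subset\G_y$, one has $r|_F$ injective and $|F|=|B_\ell(x_0,R)|>C(1+R)^D$, so $F$ satisfies conditions (i) and (ii) of Lemma \ref{mainlemma}. As $C,D>0$ were arbitrary, that lemma shows $\G$ fails $\E$-(RD) for every twist $\E$; in particular $(3)$ fails, completing the contrapositive. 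I expect the only genuine obstacle to be this transport step, and it is exactly there that the passage from \emph{principal} to \emph{topologically principal} is paid for by the assumptions of $\sigma$-compactness and properness, which are precisely what make Corollary \ref{SameCards} (equivalently, the local slice lemma) available.
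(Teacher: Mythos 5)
Your overall scheme is exactly the paper's: (1)$\Rightarrow$(2)$\Rightarrow$(3) verbatim from Theorem \ref{princRD}, and for (3)$\Rightarrow$(1) the contrapositive, transporting a large ball to a nearby source fibre with trivial isotropy (where, as you correctly argue, the range map is injective) and feeding it to Lemma \ref{mainlemma}. The one place you diverge is the transport step, and it hides a genuine gap. You invoke Corollary \ref{SameCards} at the \emph{same} radius $R$, asking for $|B_\ell(y,R)|=|B_\ell(x_0,R)|$ for all $y$ in a neighbourhood $V$ of $x_0$. But the local slice lemma only yields such an identification after enlarging the radius to some $R'\in[R,R+\ve)$, and the fixed-radius statement can fail whenever the sphere $\{\gamma\in\G_{x_0}:\ell(\gamma)=R\}$ is nonempty, because those elements may acquire length slightly larger than $R$ in nearby fibres. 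Concretely, take $\G=\Z_2\ltimes[0,1]$ for the flip $x\mapsto 1-x$, with the continuous, proper length $\ell(1,x)=|x-1/2|$; this groupoid is $\sigma$-compact, \'etale and topologically principal, yet at $x_0=1/4$, $R=1/4$ one has $|B_\ell(x_0,R)|=2$ while $|B_\ell(y,R)|=1$ for every $y<1/4$, so no neighbourhood $V$ works. Worse, the inequality that does survive the slice argument at a fixed radius is $|B_\ell(y,R)|\le|B_\ell(x_0,R)|$, which is the wrong direction for you: you need the trivial-isotropy fibre to carry a \emph{large} ball. Since you are citing a corollary stated in the paper, the flaw is arguably inherited rather than introduced, but as written your argument rests on an intermediate statement that is false in general, and the paper's own proof conspicuously avoids it.

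The repair is small and is precisely what the paper does: exploit the strictness of the growth inequality to absorb a radius perturbation. Having found $x_0$ and $R_0$ with $|B_\ell(x_0,R_0)|>C(1+R_0)^D$, choose $\ve>0$ so small that both $|B_\ell(x_0,R_0)|>C(1+R_0+\ve)^D$ and $B_\ell(x_0,R_0+\ve)=B_\ell(x_0,R_0)$ (the latter is possible because properness and continuity of $\ell$ make the balls finite, so $\ell$ takes only finitely many values on $B_\ell(x_0,R_0+1)$). Then apply Lemma \ref{LocSliceLemma} directly with the pair $(R_0,\ve)$ to get $R\in[R_0,R_0+\ve)$ and an open $V\ni x_0$ with $|B_\ell(y,R)|=|B_\ell(x_0,R_0)|$ for all $y\in V$; picking $y\in V$ with trivial isotropy, as you do, gives $|B_\ell(y,R)|>C(1+R_0+\ve)^D>C(1+R)^D$, and Lemma \ref{mainlemma} applies with $F=B_\ell(y,R)$. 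Everything else in your proposal (the reduction to (3)$\Rightarrow$(1), the density argument locating a trivial-isotropy unit in $V$, the injectivity of $r$ on $\G_y$) is correct and matches the paper; only this perturbation bookkeeping needs to be inserted in place of the appeal to Corollary \ref{SameCards}.
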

\begin{proof}
    The only part that differs from the proof of Theorem \ref{princRD} is the proof of {\em (3)$\Rightarrow$(1)}.
    Suppose $\G$ does not have polynomial growth with respect to the length function $\ell$.  
    Fix $C,D>0$.  
    We can find some $x_0\in\Go$ and some $R_0>0$ such that 
    \begin{align*}
        |B(x_0,R_0)|>C(1+R_0)^D.
    \end{align*}
    Choose $\varepsilon>0$ such that 
    \begin{align*}
    |B(x_0,R_0)|>C(1+R_0+\varepsilon)^D,
    \end{align*}
    and moreover assume that $\varepsilon$ is chosen small enough so that $B(x_0,R_0+\varepsilon)=B(x_0,R_0)$.  
    By the local slice lemma, we can find some $R\in[R_0,R_0+\varepsilon)$ and some open set $V\subset\Go$ containing $x_0$, such that $|B(x,R)|=|B(x_0,R_0)|$ for all $x\in V$.  
    Since $\G$ is topologically principal, we can find some $x\in V$ with trivial isotropy.  
    Then
    \begin{align*}
    |B(x,R)|=|B(x_0,R_0)|>C(1+R_0+\varepsilon)^D>C(1+R)^D.
    \end{align*}
    Now $F=B(x,R)$ satisfies the conditions of Lemma \ref{mainlemma}, and therefore $\G$ cannot have property (RD).
\\\end{proof}

Now we consider the special case of coarse groupoids.  
As an application of the above result, we obtain a generalization of Theorem 2.1 from \cite{Chen2003}.  
We begin by briefly recalling their construction (for more details, one can consult \cite{STY2002,Roe2003}).  
Let $(X,d)$ be a discrete metric space, and assume for ease of exposition that it has \textit{bounded geometry}, meaning that for each $r>0$, there is a uniform bound on the cardinality of the balls $B(x,r)$ as $x$ varies over $X$. 
For $r\geq0$, let $E_r=\{(x,y)\in X\times X:d(x,y)\leq r\}$ denote the tube of radius $r$, and let $\overline{E_r}$ denote its closure in $\beta(X\times X)$, the Stone-\v Cech compactification of $X\times X$.  
As a space, the \textit{coarse groupoid} of $X$ is $\G(X)=\cup_{r\geq 0}\overline{E_r}\subset\beta X\times\beta X$.  
By Theorem 10.20 of \cite{Roe2003}, the pair groupoid structure on $X\times X$ extends continuously to $\G(X)$ , making it a principal, $\sigma$-compact, \'etale groupoid, with unit space homeomorphic to $\beta X$, and range and source maps respectively the unique extensions of of the first and second factor maps $X\times X\to X$.

We recall the notions necessary to define metric rapid decay for the bounded geometry metric space $(X,d)$.  
First, we recall the definition of the uniform Roe algebra associated to $X$.  
For a function $k:X\times X\to\C$, the \textit{propagation} of $k$ is the quantity $\propa(X)=\sup\{d(x,y):x,y\in X, k(x,y)\ne0\}$.  
$\C_u(X)$ be the space of those functions $k$ that are bounded (meaning $\sup\{|k(x,y)|:x,y\in X\}$ is finite) and of finite propagation.  
This is a $*$-algebra, which admits a canoncial action on the Hilbert space $\ell^2(X)$ of square-summable functions $X\to\C$.  
The $C^*$-algebra generated by $\C_u(X)$ is called the \textit{uniform Roe algebra} of $X$, and is denoted $C^*_u(X)$.

\begin{definition}
    Let $k:X\times X\to\mathbb C$ be given.  
    For $t\ge0$, we define the quantities $\|k\|_{BS,t},\|k\|_{BS^*,t}\in[0,\infty]$ by
    \begin{align*}
        &\|k\|_{BS,t}=\left(\sup_{y\in X}\sum_{x\in X}|k(x,y)|^2(1+d(x,y))^{2t}\right)^{1/2}\\
        &\|k\|_{BS^*,t}=\max\{\|k\|_{BS,t},\|k^*\|_{BS,t}\},
    \end{align*}
    where $k^*:X\times X\to\C$ is defined by $k^*(x,y)=\overline{k(y,x)}$.
    We denote by $BS_2(X)$ the space of all functions $k:X\times X\to\C$ such that $\|k\|_{BS^*,t}<\infty$ for all $t\ge0$.
\end{definition}

Note that $BS_2(X)$ is a Fr\'echet space with the topology given by the family of seminorms $\{\|\cdot\|_{BS^*,t}:t\in\Z_{\ge0}\}$.

\begin{definition}
    We say that $X$ has property (MRD), or has \textit{(metric) rapid decay}, if $BS_2(X)$ is contained in $C^*_u(X)$.\footnote{Our definition differs slightly from those given in \cite{Chen2003} and \cite{JiYu2020}, to account for the self-adjointness of the norms in Definition \ref{GrpdRDnorms}, which was adapted from \cite{Hou2017}.  This is only a matter of convention; and one can easily adapt these results to that setting.}
\end{definition}

We briefly outline the construction of a canonical length function on $\G(X)$ as done in Section 5 of \cite{MaWu2020}. 
First, observe that for each $r\geq 0$, the restriction of the metric $d:E_r\to[0,r]$ extends to a continuous map $\ell:\overline{E_r}\to[0,r]$, and that these extensions respect the inclusions $\overline{E_r}\subset\overline{E_{r'}}$ for $r'\geq r$, producing a well-defined length $\ell$ on $\G(X)$, which is clearly continuous.  
Properness of this length is also readily verified, as for each $r\geq0$ we have $\ell^{-1}([0,r])\subset\overline{E_r}$, a compact subset of $\G(X)$.

\begin{lemma}\label{mrdgrpdrd}
    Let $X$ be a discrete metric space with bounded geometry. 
    If $X$ has property $(MRD)$, then $\G(X)$ has property (RD) with respect to the length function defined above.
\end{lemma}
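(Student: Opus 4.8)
The plan is to transport property (MRD) on $X$ across the well-known identification between the uniform Roe algebra of $X$ and the reduced $C^*$-algebra of the coarse groupoid, after checking that the metric rapid-decay seminorms $\|\cdot\|_{BS^*,t}$ match the groupoid seminorms $\|\cdot\|_{\ell,t}$ under this identification.

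First I would set up the dictionary between the two algebras. The discrete set $X$ consists of isolated points of $\beta X$, so $X\times X$ sits inside $\G(X)$ as a dense open subset (it contains each $E_r$, and $\G(X)=\cup_r\overline{E_r}$), and restriction $f\mapsto k_f:=f|_{X\times X}$ carries $C_c(\G(X))$ into $\C_u(X)$: if $\supp f\subset\overline{E_r}$ then $k_f$ has propagation at most $r$, and $\|k_f\|_\infty\le\|f\|_\infty$. A direct computation of the regular representation shows that, for $x\in X$, the source fibre $\G(X)_x$ is canonically $X$ (via $(y,x)\mapsto y$), that $\ell(y,x)=d(y,x)$, and that $\lambda_x(f)$ is exactly the operator on $\ell^2(X)$ with kernel $k_f$; in particular $\|\lambda_x(f)\|=\|k_f\|_{C^*_u(X)}$ for every $x\in X$. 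The restriction map is the one implementing the $*$-isomorphism $C^*_r(\G(X))\cong C^*_u(X)$ of \cite{STY2002,Roe2003}, so $\|f\|_{C^*_r(\G(X))}=\|k_f\|_{C^*_u(X)}$; equivalently, the supremum over the boundary points $x\in\beta X\setminus X$ does not exceed the value $\|k_f\|_{C^*_u(X)}$ already attained on $X$.

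Second, I would prove the seminorm identity $\|f\|_{\ell,t}=\|k_f\|_{BS^*,t}$ for all $f\in C_c(\G(X))$ and $t\ge0$. For $x\in X$ the fibre description gives
\[
\|f\|_{\ell,t,s,x}^2=\sum_{y\in X}|k_f(y,x)|^2(1+d(y,x))^{2t},
\]
so $\sup_{x\in X}\|f\|_{\ell,t,s,x}=\|k_f\|_{BS,t}$. To see that the groupoid seminorm, which is a supremum over all of $\beta X$, equals this supremum over the dense subset $X$, note that $g:=|f|^2(1+\ell)^{2t}$ lies in $C_c(\G(X))$ (here continuity of $\ell$ is used), and that for any $g\in C_c(\G(X))$ the fibrewise sum $x\mapsto\sum_{\gamma\in\G(X)_x}g(\gamma)$ is continuous on the unit space $\beta X$ (a standard fact for \'etale groupoids, proved by reducing to $g$ supported in a single open bisection). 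Since a continuous function on $\beta X$ has the same supremum over $\beta X$ as over the dense set $X$, we get $\|f\|_{\ell,t,s}=\|k_f\|_{BS,t}$. Because $k_{f^*}=(k_f)^*$ (as $(y,x)^{-1}=(x,y)$), the same applies to $f^*$, and taking the maximum yields $\|f\|_{\ell,t}=\|k_f\|_{BS^*,t}$.

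Finally I would extract a uniform constant from (MRD). Property (MRD) asserts $BS_2(X)\subset C^*_u(X)$; this inclusion is a linear map from the Fr\'echet space $BS_2(X)$ (with the seminorms $\|\cdot\|_{BS^*,t}$) into the Banach space $C^*_u(X)$, and its graph is closed, since convergence in $\|\cdot\|_{BS,0}$ forces entrywise convergence of kernels while convergence in $C^*_u(X)$ forces convergence of matrix coefficients, so any two limits must have equal kernels. By the closed graph theorem there are constants $C,t\ge0$ with $\|k\|_{C^*_u(X)}\le C\|k\|_{BS^*,t}$ for every $k\in BS_2(X)$, in particular for every finite-propagation $k$. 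Combining the three steps, for every $f\in C_c(\G(X))$ we obtain $\|f\|_{C^*_r(\G(X))}=\|k_f\|_{C^*_u(X)}\le C\|k_f\|_{BS^*,t}=C\|f\|_{\ell,t}$, which is exactly property (RD) for $\G(X)$. I expect the main obstacle to be the careful treatment of the boundary $\beta X\setminus X$: for the $C^*$-norm this is absorbed into the cited Roe-algebra isomorphism, while for the rapid-decay seminorms it is handled by the continuity of fibrewise sums together with the density of $X$ in $\beta X$.
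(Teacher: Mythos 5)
Your proposal is correct and follows essentially the same route as the paper: the closed graph theorem applied to the inclusion $BS_2(X)\subset C^*_u(X)$ to extract constants $C,t$, the restriction map $f\mapsto k_f$, a comparison of the rapid-decay seminorms, and the identification $\|f\|_{C^*_r\G(X)}=\|k_f\|_{C^*_u(X)}$ from Proposition 10.29 of Roe's book. The only real difference is that you prove the full equality $\|f\|_{\ell,t}=\|k_f\|_{BS^*,t}$ via continuity of fibrewise sums and density of $X$ in $\beta X$, whereas the paper needs (and proves) only the easy inequality $\|k_f\|_{BS^*,t}\le\|f\|_{\ell,t}$, since the estimate only runs in that direction.
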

\begin{proof}
    Suppose $X$ has $(MRD)$.  
    The inclusion of $BS_2(X)$ into $C^*_u(X)$ is a closed map, hence continuous, and there exist $C,t\ge0$ such that $\|k\|_{C^*_u(X)}\leq C\|k\|_{BS^*,t}$ for all $k\in BS_2(X)$.  
    If now $f\in C_c(\G(X))$, let $k_f:X\times X\to\mathbb C$ denote the restriction of $f$ to $X\times X\subset\G(X)$.  
    Then we have the estimate
    \begin{align*}
        \|k_f\|_{BS,t}
        =\sup_{y\in X}\left(\sum_{x\in X}|f(x,y)|(1+\ell(x,y))^{2t}\right)^{1/2}
        =\sup_{y\in X}\|f\|_{\ell,t,s,y}
        \le\|f\|_{\ell,t,s}.
    \end{align*}
    Note that $(k_f)^*=k_{f^*}$, so $\|k_f\|_{BS^*,t}\le\|f\|_{\ell,t}$.  
    By Proposition 10.29 in \cite{Roe2003}, we have $\|f\|_{C^*_r\G}=\|k_f\|_{C^*_u(X)}$, and thus 
    \begin{align*}
        \|f\|_{C^*_r\G}\le C\|f\|_{\ell,t}.
    \end{align*}
\end{proof}

As in Definition 1.7 of \cite{Chen2003}, we say the metric space $(X,d)$ has \textit{polynomial growth} if there exist constants $C,n>0$ such that $|B(x,r)|\le C(1+r)^n$ for all $x\in X$ and $r\ge0$.  
An application of the local slice lemma yields the following result:

\begin{lemma}\label{mpggrpdpg}
    Let $X$ be a discrete metric space with bounded geometry. 
    Then $\G(X)$ has polynomial growth if and only if $X$ has polynomial growth.
\end{lemma}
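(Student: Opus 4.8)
The plan is to prove both implications by reducing everything to the genuine points $x\in X$, regarded inside $\Go\cong\beta X$, where the groupoid balls literally coincide with the metric balls. The crucial preliminary step is the following identification: for each genuine point $x\in X\subset\beta X=\Go$, the source fibre $\G(X)_x$ consists precisely of the genuine pairs $\{(y,x):y\in X\}$, and hence the groupoid ball $B_\ell(x,r)=\{\eta\in\G(X)_x:\ell(\eta)\le r\}$ is in bijection, via $y\mapsto(y,x)$, with the metric ball $B_X(x,r)=\{y\in X:d(y,x)\le r\}$, so that $|B_\ell(x,r)|=|B_X(x,r)|$ for all $x\in X$ and $r\ge 0$. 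To rule out ``ghost'' elements in $\G(X)_x$, I would use bounded geometry together with the fact that each $x\in X$ is isolated in $\beta X$: any net $(y_i,x_i)\in E_r$ whose source converges to $x$ has $x_i$ eventually equal to $x$, forcing $y_i\in B_X(x,r)$, a finite set, so the limit is a genuine pair $(y,x)$. Equivalently, $X$ is an open invariant subset of $\Go$ whose reduction $s^{-1}(X)$ is the pair groupoid $X\times X$, on which $\ell$ restricts to the metric $d$.

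For the forward implication, suppose $\G(X)$ has polynomial growth with constants $C,n$, so $|B_\ell(x,r)|\le C(1+r)^n$ for all $x\in\Go$ and $r>0$. Restricting this bound to the genuine units $x\in X\subset\Go$ and invoking the identification above yields $|B_X(x,r)|=|B_\ell(x,r)|\le C(1+r)^n$ for every $x\in X$, which is exactly polynomial growth of the metric space $X$.

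For the reverse implication, suppose $X$ has polynomial growth with constants $C,n$, and fix an arbitrary $\omega\in\Go$ and $R>0$. Since $\G(X)$ is $\sigma$-compact and \'etale and $\ell$ is continuous and proper, Corollary \ref{SameCards} furnishes an open neighborhood $V\subset\Go$ of $\omega$ on which the ball cardinality is constant, namely $|B_\ell(x,R)|=|B_\ell(\omega,R)|$ for all $x\in V$. Because $X$ is dense in $\beta X=\Go$, I may choose a genuine point $x\in V\cap X$, and then the identification together with polynomial growth of $X$ gives
\[
|B_\ell(\omega,R)|=|B_\ell(x,R)|=|B_X(x,R)|\le C(1+R)^n .
\]
As $\omega$ and $R$ were arbitrary, $\G(X)$ has polynomial growth with the same constants $C,n$.

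The step I expect to require the most care is the identification in the first paragraph, that is, verifying that the source fibre over a genuine point contains no boundary points of $\beta(X\times X)$; this is where the bounded-geometry hypothesis and the concrete structure of the coarse groupoid genuinely enter. Once this is in hand, the two growth implications are short, the reverse one being essentially a density argument layered on top of Corollary \ref{SameCards}.
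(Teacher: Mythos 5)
Your proof is correct and follows essentially the same route as the paper's: the reverse implication is exactly the paper's argument (Corollary \ref{SameCards} at an arbitrary $\omega\in\beta X$, then density of $X$ in $\beta X$ to pick a genuine point in the neighborhood $V$), and the forward implication is the restriction to genuine units that the paper calls clear. The only difference is that you carefully justify the identification $\G(X)_x=\{(y,x):y\in X\}$ and $|B_\ell(x,r)|=|B_X(x,r)|$ for $x\in X$, which the paper uses without comment; that verification is sound and is a welcome addition rather than a deviation.
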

\begin{proof}
    The forward implication is clear, so assume that $X$ has polynomial growth, and fix $C>0$, $d\in\mathbb N$ such that for all $R>0$, we have 
    \begin{align*}
        \sup_{x\in X}|B(x,R)|\leq C(1+R)^d.
    \end{align*}
    Fix an ultrafilter $\omega_0\in\beta X$ and $R>0$.  
    Choose $\ve>0$ such that $B_\ell(\omega,R+\ve)=B_\ell(\omega,R)$ for all $\omega\in\beta X$.  
    By Corollary \ref{SameCards}, there is an open neighborhood $V$ of $\omega_0$ in $\beta X$ such that $|B_\ell(\omega,R)|=|B_\ell(\omega_0,R)|$ for all $\omega\in V$.  
    Fixing some $x_0\in V\cap X$, we have 
    \begin{align*}
        |B_\ell(\omega_0,R)|=|B_\ell(x_0,R)|=|B(x_0,R)|\le C(1+R)^d.
    \end{align*}
    Therefore, $\G(X)$ has polynomial growth. 
\\\end{proof}

Combining Lemmas \ref{mpggrpdpg} and \ref{mrdgrpdrd}, we see that Theorem \ref{princRD} generalizes a previous theorem of Chen and Wei.

\begin{theorem}[{\cite[Theorem 2.1]{Chen2003}}]
     Let $X$ be a discrete metric space with bounded geometry.  
     Then $X$ has property $(MRD)$ if and only if $X$ has polynomial growth.  
\end{theorem}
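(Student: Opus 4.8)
The plan is to transport the dichotomy on $X$ to the coarse groupoid $\G(X)$ and then apply Theorem \ref{princRD}. As recalled above, $\G(X)$ is a principal, $\sigma$-compact, \'etale groupoid whose canonical length $\ell$ is continuous, so Theorem \ref{princRD} applies: $\G(X)$ has (RD) with respect to $\ell$ if and only if it has polynomial growth. By Lemma \ref{mpggrpdpg} the latter is equivalent to polynomial growth of $X$. Thus the entire theorem reduces to identifying property (MRD) of $X$ with property (RD) of $\G(X)$.

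One direction is immediate. If $X$ has (MRD), then Lemma \ref{mrdgrpdrd} gives that $\G(X)$ has (RD), Theorem \ref{princRD} upgrades this to polynomial growth of $\G(X)$, and Lemma \ref{mpggrpdpg} transfers it to polynomial growth of $X$. This settles the implication (MRD) $\Rightarrow$ polynomial growth, which is the substantive half and the one genuinely supplied by the groupoid machinery, since it rests on the hard direction (3)$\Rightarrow$(1) of Theorem \ref{princRD} (that is, on Lemma \ref{mainlemma}).

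For the converse I would establish a converse to Lemma \ref{mrdgrpdrd}, namely that (RD) for $\G(X)$ forces (MRD) for $X$. First I would note that, just as in the groupoid characterization of $\E$-(RD) by the inclusion $S_\ell(\G,\E)\subset\CrGE$, property (MRD) --- defined as $BS_2(X)\subset C^*_u(X)$ --- is equivalent, via the closed graph theorem on the Fr\'echet space $BS_2(X)$ in which $\C_u(X)$ is dense, to a single estimate $\|k\|_{C^*_u(X)}\le C\|k\|_{BS^*,t}$ holding for all $k\in\C_u(X)$. Given a bounded, finite-propagation kernel $k$, I would realize it as $k=k_f$ for some $f\in C_c(\G(X))$: the support of $k$ lies in one tube $E_r$ and, by bounded geometry, decomposes into finitely many graphs of partial translations, each of which has closure an open bisection of $\G(X)$ on which $k$ extends to a continuous, compactly supported function. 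Using the identity $\|f\|_{C^*_r\G}=\|k_f\|_{C^*_u(X)}$ already invoked in the proof of Lemma \ref{mrdgrpdrd}, the groupoid (RD) estimate becomes $\|k\|_{C^*_u(X)}\le C\|f\|_{\ell,t}$, and it remains only to replace $\|f\|_{\ell,t}$ by $\|k\|_{BS^*,t}$.

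The main obstacle, and the one point where the inequality of Lemma \ref{mrdgrpdrd} must be sharpened to an equality, is the comparison $\|f\|_{\ell,t,s}=\|k_f\|_{BS,t}$. Here $\|k_f\|_{BS,t}$ is a supremum of $\|f\|_{\ell,t,s,y}$ taken over $y\in X$, whereas $\|f\|_{\ell,t,s}$ is the same supremum taken over all $y$ in the unit space $\beta X$ of $\G(X)$, so a priori the latter is the larger quantity. I would show that the function $\omega\mapsto\|f\|_{\ell,t,s,\omega}$ is continuous on $\beta X$: bounded geometry bounds the cardinalities of the balls $B_\ell(\omega,R)$ uniformly (cf.\ Corollary \ref{SameCards}) and $f$ has compact support, so only finitely many terms of controlled size contribute near each point. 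Its supremum over the dense subset $X$ then equals its supremum over $\beta X$, yielding the desired equality. With this in hand the groupoid (RD) estimate is exactly the (MRD) estimate on $\C_u(X)$, which completes the converse and hence the theorem.
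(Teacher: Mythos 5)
Your proposal is correct, and for the substantive implication it follows exactly the paper's route: (MRD) gives groupoid (RD) for $\G(X)$ by Lemma \ref{mrdgrpdrd}, Theorem \ref{princRD} (applicable since $\G(X)$ is principal and $\ell$ is continuous) upgrades this to polynomial growth of $\G(X)$, and Lemma \ref{mpggrpdpg} transfers that to $X$. Where you genuinely depart from the paper is the converse. The paper's lemmas only cross the bridge between $X$ and $\G(X)$ in one direction: from polynomial growth of $X$ they yield (RD) for $\G(X)$, not (MRD) for $X$, and the paper implicitly defers the implication (polynomial growth $\Rightarrow$ (MRD)) to the elementary, classical argument (it is the easy half of the cited Chen--Wei theorem, a direct Cauchy--Schwarz estimate analogous to \cite[Proposition 3.5]{Hou2017}). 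You instead close the loop inside the groupoid framework by proving the converse of Lemma \ref{mrdgrpdrd}, and you correctly isolate the two ingredients this requires: (a) every bounded finite-propagation kernel $k$ arises as $k_f$ for some $f\in C_c(\G(X))$, which works since $\overline{E_r}\cong\beta E_r$ (or via your decomposition into closures of partial translation graphs, which are compact open bisections); and (b) the inequality $\|k_f\|_{BS,t}\le\|f\|_{\ell,t,s}$ used in Lemma \ref{mrdgrpdrd} is in fact an equality, because $\omega\mapsto\|f\|_{\ell,t,s,\omega}$ is continuous on $\beta X$ and $X$ is dense there. Your continuity argument via Corollary \ref{SameCards} and the local slice lemma does go through ($\G(X)$ is $\sigma$-compact and $\ell$ is continuous and proper), but a simpler and more general route is available: for any \'etale groupoid and any $g\in C_c(\G)$, the fiberwise sum $x\mapsto\sum_{\gamma\in\G_x}g(\gamma)$ is continuous on the unit space (write $g$ as a finite sum of functions supported on open bisections), and applying this to $g=|f|^2(1+\ell)^{2t}$ gives exactly what you need, with no properness or $\sigma$-compactness hypotheses. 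The payoff of your longer route is a statement strictly stronger than what the paper records, namely that (MRD) for $X$ is \emph{equivalent} to (RD) for $\G(X)$, so that both directions of the Chen--Wei theorem, not just the hard one, become corollaries of Theorem \ref{princRD}; the paper's tacit reliance on the elementary direction is shorter but leaves that equivalence unstated.
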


\section{Permanence Results}

In this last section, we list some permanence properties enjoyed by the rapid decay property.  
We give conditions for products of (RD) groupoids to have (RD) (see Proposition \ref{products}), and as a consequence we obtain examples of groupoids which are not groups, do not have polynomial growth, and yet satisfy (RD).
Other than this, the main result is Theorem \ref{propreghoms}, which gives conditions on which (RD) transfers from the domain of a groupoid homomorphism to its codoman, and give a few corollaries to this result.  
But first, we give a simple result regarding inclusions of groupoids.

\begin{proposition}
    Suppose $\H$ is an \'etale groupoid, and that $\G\subset\H$ is an open subgroupoid.  
    Let $\ell$ be a length function on $\H$, let $\mathcal{F}\overset{\pi}{\to}\H$ be a twist over $\H$, and let $\E=\pi^{-1}(\G)$.  
    Then $\E$ is a twist over $\G$, and if $\H$ has property $\mathcal{F}$-(RD) with respect to $\ell$, then $\G$ has property $\E$-(RD) with respect to $\tilde{\ell}$, where $\tilde{\ell}$ is the restriction of $\ell$ to $\G$.
\end{proposition}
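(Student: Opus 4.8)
The plan is to realize $\ScGE$ as a subspace of $\Sigma_c(\H,\mathcal{F})$ via extension by zero, to observe that this extension is isometric for the rapid decay seminorms and nonincreasing for the reduced norm, and then to let the inequality defining $\mathcal{F}$-(RD) for $\H$ descend to $\G$. First, though, I would verify that $\E=\pi^{-1}(\G)$ is a twist over $\G$. Since $\pi$ is continuous and $\G$ is open, $\E$ is an open subgroupoid of $\mathcal{F}$, hence itself a locally compact Hausdorff \'etale groupoid; its unit space is $\E\cap\mathcal{F}^{(0)}=\pi^{-1}(\Go)\cap\mathcal{F}^{(0)}$, which $\pi$ identifies with $\Go$. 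The inclusion $i$ restricts to a map $\Go\times\T\to\E$ and $\pi$ restricts to a surjection $\E\to\G$; short-exactness, centrality, and local triviality are each inherited from the corresponding property of $\mathcal{F}\overset{\pi}{\to}\H$ by intersecting the relevant sets with the open set $\G$ (for local triviality one replaces each trivializing neighborhood $U\subset\H$ of a point of $\G$ by $U\cap\G$).

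For the main assertion, fix a section $\rho:\H\to\mathcal{F}$ for $\pi$; its restriction $\rho|_\G$ is a section for $\pi|_\E$, since $\gamma\in\G$ forces $\rho(\gamma)\in\pi^{-1}(\G)=\E$. Given $f\in\ScGE$, because $\E$ is open in $\mathcal{F}$ and $\supp(f)$ is compact, the extension-by-zero $\tilde f$ (equal to $f$ on $\E$ and $0$ off $\E$) is continuous, compactly supported, and $\T$-equivariant, so $\tilde f\in\Sigma_c(\H,\mathcal{F})$; moreover $(\tilde f)^*=\widetilde{f^*}$ since $\E$ is closed under inversion. I would then check that the rapid decay seminorms agree: for $x\in\Go$ the sum defining $\|\tilde f\|_{\mathcal{F},\ell,t,s,x}$ ranges over $\H_x$, but $\tilde f(\rho(\gamma))\ne0$ forces $\gamma\in\G$, hence $\gamma\in\G_x$, where $\ell=\tilde\ell$; and for $x\in\Ho\setminus\Go$ one has $\H_x\cap\G=\varnothing$ (as $s(\gamma)\in\Go$ whenever $\gamma\in\G$), so that fibre contributes nothing. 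Taking suprema gives $\|\tilde f\|_{\mathcal{F},\ell,t,s}=\|f\|_{\E,\tilde\ell,t,s}$, and combining with $(\tilde f)^*=\widetilde{f^*}$ yields $\|\tilde f\|_{\mathcal{F},\ell,t}=\|f\|_{\E,\tilde\ell,t}$.

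Next I would compare the reduced norms. For $x\in\Go$ the inclusion $\G_x\subset\H_x$ gives an isometric embedding $\ell^2\G_x\hookrightarrow\ell^2\H_x$ by extending by zero. The key point is that for $\xi$ supported on $\G_x$, the summand $\tilde f(\rho(\gamma)\rho(\eta)^{-1})\xi(\eta)$ is nonzero only when $\eta\in\G_x$ and $\gamma\eta^{-1}\in\G$, which, as $\G$ is a subgroupoid, forces $\gamma\in\G_x$; hence $\ell^2\G_x$ is invariant under $\lambda_x^\rho(\tilde f)$ and the restriction equals $\lambda_x^{\rho|_\G}(f)$ (equivalently, $\lambda_x^{\rho|_\G}(f)$ is the compression of $\lambda_x^\rho(\tilde f)$ to $\ell^2\G_x$). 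Consequently $\|\lambda_x^{\rho|_\G}(f)\|\le\|\lambda_x^\rho(\tilde f)\|\le\|\tilde f\|_{C^*_r(\H,\mathcal{F})}$, and taking the supremum over $x\in\Go$ gives $\|f\|_{\CrGE}\le\|\tilde f\|_{C^*_r(\H,\mathcal{F})}$. Finally, if $\H$ has $\mathcal{F}$-(RD) with constants $C,t$, then $\|f\|_{\CrGE}\le\|\tilde f\|_{C^*_r(\H,\mathcal{F})}\le C\|\tilde f\|_{\mathcal{F},\ell,t}=C\|f\|_{\E,\tilde\ell,t}$, establishing $\E$-(RD) for $\G$ with the same constants.

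The verifications that $\tilde f$ is genuinely continuous and $\T$-equivariant, and that the twist axioms for $\E$ hold, are straightforward once $\G$ is open. I expect the one step deserving real care to be the invariance claim $\lambda_x^\rho(\tilde f)(\ell^2\G_x)\subseteq\ell^2\G_x$ with matching restriction, since it is precisely here that both the openness of $\G$ (so that $\tilde f$ vanishes outside $\E$) and its being a subgroupoid (closure under products and inverses, so that $\eta\in\G_x$ and $\gamma\eta^{-1}\in\G$ together force $\gamma\in\G_x$) are used simultaneously.
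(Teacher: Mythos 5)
Your proposal is correct and follows essentially the same route as the paper's proof: extension by zero $\ScGE\hookrightarrow\Sigma_c(\H,\mathcal{F})$ and $\ell^2\G_x\hookrightarrow\ell^2\H_x$, the inequality $\|f\|_{\CrGE}\le\|\tilde f\|_{C^*_r(\H,\mathcal{F})}$, equality of the weighted seminorms, and then descent of the (RD) inequality with the same constants. The only difference is one of detail: you explicitly verify the invariance of $\ell^2\G_x$ under $\lambda_x^\rho(\tilde f)$ and the matching of the restriction with $\lambda_x^{\rho|_\G}(f)$, which the paper asserts implicitly via the identity $\|\lambda_x^{\tilde\rho}(f)\xi\|_{\ell^2\G_x}=\|\lambda_x^\rho(f)\xi\|_{\ell^2\H_x}$.
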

\begin{proof}
    It is straightforward to check that $\E$ defines a twist over $\G$.
    As $\G\subset\H$ is open, $\E$ is an open subgroupoid of $\mathcal{F}$, so extension-by-zero yields inclusions $\ScGE\subset\Sigma_c(\H,\mathcal{F})$ and $\ell^2\G_x\subset\ell^2\H_x$ for all $x\in\Go$.  Moreover these later inclusions are isometric.
    \\\indent Let $\rho:\H\to\mathcal{F}$ be a section for the bundle map $\pi$, and let $\tilde{\rho}$ denote the restriction of $\rho$ to $\G$.   
    For any $f\in\ScGE$, $x\in\Go$, and $\xi\in\ell^2\G_x$, we have
    \begin{align*}
        \|\lambda_x^{\tilde{\rho}}(f)\xi\|_{\ell^2\G_x}^2
        &=\|\lambda_x^\rho(f)\xi\|_{\ell^2\H_x}^2
        \le\|\lambda_x^\rho(f)\|_{\bbB(\ell^2\H_x)}\|\xi\|_{\ell^2\H_x}\\
        &=\|\lambda_x^\rho(f)\|_{\bbB(\ell^2\H_x)}\|\xi\|_{\ell^2\G_x}.
    \end{align*}
    It follows that $\|\lambda_x^{\tilde{\rho}}(f)\|_{\bbB(\ell^2\G_x)}\le\|\lambda_x^\rho(f)\|_{\bbB(\ell^2\H_x)}$, and thus $\|f\|_{C^*_r(\G,\E)}\le\|f\|_{C^*_r(\H,\mathcal{F})}$.  
    \\\indent For $t\ge0$ and $f\in\ScGE$, we have $\|f\|_{\mathcal{F},\ell,t,s,x}=\|f\|_{\E,\tilde{\ell},t,s,x}$ whenever $x\in\Go$  and $\|f\|_{\mathcal{F},\ell,t,s,x}=0$ whenever $x\in\Ho\setminus\Go$.  
    Taking suprema, it follows that $\|f\|_{\mathcal{F},\ell,t,s}=\|f\|_{\E,\tilde{\ell},t,s}$, and taking adjoints we obtain $\|f\|_{\mathcal{F},\ell,t}=\|f\|_{\E,\tilde{\ell},t}$.  
    Assuming $\H$ has $\mathcal{F}$-(RD) with respect to $\ell$, there are constants $C,t\ge0$ such that $\|h\|_{C^*_r(\H,\mathcal{F})}\le C\|h\|_{\mathcal{F},\ell,t}$ for all $h\in\Sigma_c(\H,\mathcal{F})$, and thus 
    \begin{align*}
    \|f\|_{\CrGE}\le\|f\|_{C^*_r(\H,\mathcal{F})}\le C\|f\|_{\mathcal{F},\ell,t}=C\|f\|_{\E,\tilde{\ell},t}.
    \end{align*}
\end{proof}

We now consider the case of products of \'etale groupoids.  
It is known (see Lemma 3.1 of \cite{CPSC2007} for instance) that products of (RD) groups satisfy (RD).  
At the time of this writing, it is not known to the author if the same holds in this more general setting, but we can obtain a partial result.

Before stating the result, let us note the following simple construction.  
Let $\G$ and $\H$ be \'etale groupoids, and let 
\begin{center}
    \begin{tikzcd}
        \Go\times\T \arrow[r, "i"] & \E \arrow[r, "\pi"] & \G
    \end{tikzcd}
\end{center}
be a twist over $\G$.  
One can define a twist over $\G\times\H$ by
\begin{center}
    \begin{tikzcd}
        \Go\times\Ho\times\T \arrow[r, "\tilde{i}"] & \E\times\H \arrow[r, "\tilde{\pi}"] & \G\times\H,
    \end{tikzcd}
\end{center}
where $\tilde{i}(x,y,z)=(i(x,z),z)$ for $x\in\Go$, $y\in\Ho$, $z\in\T$, and where $\tilde{\phi}(\ve,\eta)=(\pi(\ve),\eta)$ for $\ve\in\E$, $\eta\in\H$.

\begin{proposition}\label{products}
    Let $\G$ and $\H$ be \'etale groupoids, and let $\E$ be a twist over $\G$.   
    Suppose that $\H$ is compact, and that $\G$ has property $\E$-(RD) with respect to the length $\ell$.  
    Then $\G\times\H$ has property $\E\times\H$-(RD) with respect to the length function $\tilde\ell$, where $\tilde\ell(\gamma,\eta)=\ell(\gamma)$.
\end{proposition}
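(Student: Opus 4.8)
The plan is to reduce the desired estimate for $\G\times\H$ to the hypothesized $\E$-(RD) estimate for $\G$ by decomposing the regular representation of $\G\times\H$ into finitely many blocks indexed by the source fibres of $\H$. First I would record what compactness of $\H$ buys us: since $\H$ is compact and \'etale, it is covered by finitely many open bisections, and as each bisection meets a source fibre in at most one point, there is a uniform bound $N:=\sup_{y\in\Ho}|\H_y|<\infty$; in particular each $\H_y$ is finite. I would fix a section $\rho:\G\to\E$ for $\pi$ and work with the product section $\tilde\rho(\gamma,\eta)=(\rho(\gamma),\eta)$ for $\tilde\pi$, noting that $\tilde\rho(\gamma,\eta)\tilde\rho(\gamma',\eta')^{-1}=(\rho(\gamma)\rho(\gamma')^{-1},\eta\eta'^{-1})$ since products and inverses in $\E\times\H$ are computed coordinatewise.

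The key construction is, for each $\zeta\in\H$, the ``slice'' $f_\zeta$ defined by $f_\zeta(\ve)=f(\ve,\zeta)$. One checks that $f_\zeta$ is continuous, that its support projects into a compact subset of $\E$, and that it is $\T$-equivariant because the twist $\E\times\H$ acts only in the $\E$-coordinate, so that $f_\zeta\in\ScGE$; moreover a short computation gives $(f_\zeta)^*=(f^*)_{\zeta^{-1}}$. Fixing a unit $(x,y)\in\Go\times\Ho$ and using the identification $\ell^2((\G\times\H)_{(x,y)})=\ell^2(\G_x\times\H_y)=\bigoplus_{\eta\in\H_y}\ell^2\G_x$, I would then read off directly from the twisted convolution formula that $\lambda^{\tilde\rho}_{(x,y)}(f)$ is exactly the block operator whose $(\eta,\eta')$-entry is $\lambda^\rho_x(f_{\eta\eta'^{-1}})$; the $\T$-phases require no separate bookkeeping, as they are absorbed into evaluating $f$ at the element $\rho(\gamma)\rho(\gamma')^{-1}$ of $\E$.

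From here the argument is estimation. Writing $T=\lambda^{\tilde\rho}_{(x,y)}(f)=\sum_{\eta,\eta'\in\H_y}E_\eta\,\lambda^\rho_x(f_{\eta\eta'^{-1}})\,E_{\eta'}^*$, where $E_\eta$ is the isometric inclusion of the $\eta$-summand, the triangle inequality yields $\|T\|\le\sum_{\eta,\eta'\in\H_y}\|\lambda^\rho_x(f_{\eta\eta'^{-1}})\|$. I would apply $\E$-(RD) of $\G$ to each block, obtaining constants $C,t\ge0$ with $\|\lambda^\rho_x(f_\zeta)\|\le\|f_\zeta\|_{\CrGE}\le C\|f_\zeta\|_{\E,\ell,t}$. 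The final point is a fibrewise comparison of seminorms: since $\tilde\ell(\gamma,\eta)=\ell(\gamma)$, for any $\zeta$ and any $x'\in\Go$ the $\eta=\zeta$ summand already appears inside $\|f\|_{\E\times\H,\tilde\ell,t,s,(x',s(\zeta))}$, giving $\|f_\zeta\|_{\E,\ell,t,s,x'}\le\|f\|_{\E\times\H,\tilde\ell,t,s,(x',s(\zeta))}$; taking suprema over $x'$ and using $(f_\zeta)^*=(f^*)_{\zeta^{-1}}$ gives $\|f_\zeta\|_{\E,\ell,t}\le\|f\|_{\E\times\H,\tilde\ell,t}$. Combining these, $\|T\|\le|\H_y|^2\,C\,\|f\|_{\E\times\H,\tilde\ell,t}\le CN^2\|f\|_{\E\times\H,\tilde\ell,t}$, and taking the supremum over all units $(x,y)$ establishes $\E\times\H$-(RD) with constant $CN^2$ and the same exponent $t$.

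I expect the main obstacle to be organizational rather than conceptual: the whole argument turns on the block-operator identity for $\lambda^{\tilde\rho}_{(x,y)}(f)$, and once compactness of $\H$ supplies finite and uniformly bounded fibres, the remaining care is simply to match the twisted convolution on $\G\times\H$ with the $\G$-regular representations of the slices $f_\zeta$ and to verify that the slice seminorms are dominated by the product seminorm. The only place subtlety could hide is in confirming that $f_\zeta\in\ScGE$ and in tracking the $\T$-equivariance through the product twist, but the product structure makes both routine.
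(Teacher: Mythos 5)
Your proof is correct, and it organizes the key estimate by a somewhat different (and more elementary) route than the paper's. The shared skeleton is the same: both arguments slice $f$ into $f_\zeta(\ve)=f(\ve,\zeta)\in\ScGE$, slice vectors as $\xi_\eta\in\ell^2\G_x$, use the section $\tilde\rho=\rho\times\id_\H$, and rest on the fibrewise seminorm comparison $\|f_\zeta\|_{\E,\ell,t}\le\|f\|_{\E\times\H,\tilde\ell,t}$ (your verification via $(f_\zeta)^*=(f^*)_{\zeta^{-1}}$ is exactly what the paper leaves as ``one checks''). The difference is in how the operator norm of $\lambda^{\tilde\rho}_{(x,y)}(f)$ is controlled. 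The paper fixes a finite cover $U_1,\dots,U_n$ of $\H$ by open bisections together with a subordinate partition of unity $(h_k)$, splits $f=\sum_k f^{(k)}$ with $f^{(k)}(\ve,\eta)=f(\ve,\eta)h_k(\eta)$, and exploits that each $f^{(k)}$ has at most one nonzero block per row of the block decomposition (the column index being the unique $\zeta(\eta,k)\in\H_y$ with $\eta\zeta(\eta,k)^{-1}\in U_k$); a Cauchy--Schwarz argument over $k$, together with injectivity of $\eta\mapsto\zeta(\eta,k)$, then gives the constant $nC$. You instead use compactness only through the uniform finiteness $N=\sup_{y\in\Ho}|\H_y|\le n<\infty$, write $\lambda^{\tilde\rho}_{(x,y)}(f)$ as a genuine finite block matrix with entries $\lambda^\rho_x(f_{\eta\eta'^{-1}})$ --- an identity that is correct, since products in $\E\times\H$ are coordinatewise and the twist is trivial in the $\H$-direction --- and apply the triangle inequality over all $|\H_y|^2$ entries, getting the constant $CN^2$. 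What your approach buys is the elimination of the partition-of-unity bookkeeping, making the proof shorter and conceptually transparent; what the paper's approach buys is a marginally better constant ($Cn$ versus $CN^2$), which is immaterial for establishing (RD), and a decomposition technique (splitting along a bisection cover) that is the natural tool if one wants to relax compactness of $\H$ to a condition on the existence of a finite bisection cover.
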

\begin{proof}
    Fix a finite cover $\{U_1,\ldots,U_n\}$ of $\H$ by open bisections, and let $(h_1,\ldots,h_n)$ be a partition of unity for $\H$ subordinate to $(U_1,\ldots,U_n)$. 
    For $f\in\Sigma_c(\G\times\H,\E\times\H)$, define $f^{(k)}\in\Sigma_c(\G\times\H,\E\times\H)$ for $k\in[n]:=\{1,\ldots,n\}$ by $f^{(k)}(\ve,\eta)=f(\ve,\eta)h_k(\eta)$, and define $f_\eta\in\ScGE$ for $\eta\in\H$ by $f_\eta(\ve)=f(\ve,\eta)$.  
    For $x\in\Go$, $y\in\Ho$, $\xi\in\ell^2(\G\times\H)_{(x,y)}$, and $\eta\in\H_y$, define $\xi_\eta\in\ell^2\G_x$ by $\xi_\eta(\gamma)=\xi(\gamma,\eta)$.  
    \\\indent Now fix $f\in\Sigma_c(\G\times\H,\E\times\H)$, and let $(x,y)\in\Go\times\Ho$ and $\xi\in\ell^2(\G\times\H)_{(x,y)}$ be given.  
    Let $Z=\{(\eta,k)\in\H_y\times[n]:r(\eta)\in r(U_k)\}$.  
    For $(\eta,k)\in Z$, there is a unique $\zeta=\zeta(\eta,k)\in\H_y$ such that $\eta\zeta^{-1}\in U_k$.  
    Let $\rho:\G\to\E$ be a section for the bundle map $\pi$, and let $\tilde{\rho}=\rho\times\id_{\H}$ be the corresponding section for the bundle map $\tilde{\pi}$.  Observe that for $\gamma\in\G_x$ and $(\eta,k)\in Z$ we have 
    \begin{align*}
        [\lambda_{(x,y)}^{\tilde{\rho}}(f^{(k)})\xi](\gamma,\eta)
        =h_k(\eta\zeta(\eta_2,k)^{-1})[\lambda_{x}^{\rho}(f_{\eta\zeta(\eta,k)^{-1}})\xi_{\zeta(\eta,k)}](\gamma).
    \end{align*}
    We estimate:
    \begin{align*}
        \|\lambda_{(x,y)}^{\tilde{\rho}}(f)\xi\|_{\ell^2(\G\times\H)_{(x,y)}}^2
        &=\sum_{\gamma\in\G_x}\sum_{\eta\in\H_y}\left|\sum_{k=1}^n[\lambda_{(x,y)}^{\tilde{\rho}}(f^{(k)})\xi](\gamma,\eta)\right|^2\\
        &\le n\sum_{(\eta,k)\in Z}\sum_{\gamma\in\G_x}|[\lambda_{(x,y)}^{\tilde{\rho}}(f^{(k)})\xi](\gamma,\eta)|^2\\
        &\le n\sum_{(\eta,k)\in Z}\|\lambda_{x}^{\rho}(f_{\eta\zeta(\eta,k)^{-1}})\xi_{\zeta(\eta,k)}\|_{\ell^2\G_x}^2\\
        &\le n\sum_{(\eta,k)\in Z}\|f_{\eta\zeta(\eta,k)^{-1}}\|_{C^*_r\G}^2\|\xi_{\zeta(\eta,k)}\|_{\ell^2\G_x}^2.
    \end{align*}
    Since $\G$ has property $\E$-(RD) with respect to $\ell$, there exist constants $C,t\ge0$ such that $\|g\|_{\CrGE}\le C\|g\|_{\E,\ell,t}$ for all $g\in \ScGE$.  
    For $\eta\in\H$, one checks that $\|f_{\eta}\|_{\E,\ell,t}\le\|f\|_{\E\times\H,\tilde{\ell},t}$, and hence
    \begin{align*}
        \|\lambda_{(x,y)}^{\tilde{\rho}}(f)\xi\|_{\ell^2(\G\times\H)_{(x,y)}}^2
        \le nC^2\|f\|_{\E\times\H,\tilde{\ell},t}^2\sum_{(\eta,k)\in Z}\|\xi_{\zeta(\eta,k)}\|_{\ell^2\G_x}^2.
    \end{align*}
    For fixed $k$, the map $\eta\mapsto\zeta(\eta,k)$ is injective, and thus 
    \begin{align*}
        \sum_{(\eta,k)\in Z}\|\xi_{\zeta(\eta,k)}\|_{\ell^2\G_x}^2\le n\|\xi\|_{\ell^2(\G\times\H)_{(x,y)}}^2.
    \end{align*}
    Combining our estimates, we have obtain 
    \begin{align*}
        \|\lambda_{(x,y)}^{\tilde{\rho}}(f)\xi\|_{\ell^2(\G\times\H)_{(x,y)}}^2
        \le nC\|f\|_{\E\times\H,\tilde{\ell},t}\|\xi\|_{\ell^2(\G\times\H)_{(x,y)}}.
    \end{align*}
    Taking the supremum over $\xi\in\ell^2(\G\times\H)_{(x,y)}$, then over $(x,y)\in\Go\times\Ho$, we obtain 
    \begin{align*}
        \|f\|_{C^*_r(\G\times\H,\E\times\H}\le nC\|f\|_{\E\times\H,\tilde{\ell},t}.
    \end{align*}
\end{proof}

This allows us to conclude that when $\G=\Gamma$ is a (discrete) group with property (RD), $\E$ is the trivial twist, and $\H$ is any compact \'etale groupoid, we see that $\Gamma\times\H$ admits property (RD). 
In particular, if $\F$ is a finitely generated free group, given the word length function with respect to a free generating set, and $\mathcal R_n$ denotes the full equivalence relation on the $n$-set $[n]=\{1,\cdots,n\}$ ,then $\F\times\mathcal R_n$ is a property (RD) groupoid which fails to have polynomial growth.  
This to be expected, as $C^*_r(\F\times\mathcal{R}_n)\cong M_n(C^*_r\F)$, and  in the language of Theorem 4.2 in \cite{Hou2017}, $S^{\ell}_2(\F\times\mathcal{R}_n)$ is just $M_n(S^{\ell}_2(\F_2))$, and spectral invariance of this subalgebra is known by Theorem 2.1 in \cite{Sch1992}.

Next, we consider a fairly simple observation regarding the relationship between property (RD) for translation groupoids and property (RD) for the acting group.

\begin{proposition}\label{grpacts}
    Let $\Gamma$ be a discrete group with a length function $\ell$, and suppose $\Gamma$ acts on a compact space $X$.  
    If $\Gamma\ltimes X$ has (RD) with respect to the length function induced by $\ell$, then $\Gamma$ has (RD) with respect to $\ell$.
\end{proposition}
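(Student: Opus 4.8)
The plan is to exploit the fact that a finitely supported function on $\Gamma$ lifts to a compactly supported function on $\Gamma\ltimes X$ that is constant in the $X$-direction, and that this lift reproduces the group regular representation on every source fibre. Since $\Gamma$ is a group, its unit space is a single point, so as a topological space $\Gamma\ltimes X=\Gamma\times X$, the unit space is identified with $X$, and for each $x\in X$ the source fibre $(\Gamma\ltimes X)_x=\{(\gamma,x):\gamma\in\Gamma\}$ is canonically identified with $\Gamma$, yielding a unitary $\ell^2((\Gamma\ltimes X)_x)\cong\ell^2\Gamma$. Given a finitely supported $f:\Gamma\to\C$, I would define $F\in C_c(\Gamma\ltimes X)$ by $F(\gamma,x)=f(\gamma)$; this is continuous and compactly supported precisely because $X$ is compact and $f$ has finite support.

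First I would verify that the lift preserves the relevant seminorms. As $\ell(\gamma,x)=\ell(\gamma)$, a direct computation gives $\|F\|_{\ell,t,s,x}=\|f\|_{\ell,t}$ for every $x$, so $\|F\|_{\ell,t,s}=\|f\|_{\ell,t}$. For the adjoint one has $F^*(\gamma,x)=\overline{f(\gamma^{-1})}$, and reindexing by $\gamma\mapsto\gamma^{-1}$ together with $\ell(\gamma^{-1})=\ell(\gamma)$ shows $\|F^*\|_{\ell,t,s}=\|f\|_{\ell,t}$ as well; hence $\|F\|_{\ell,t}=\|f\|_{\ell,t}$ for all $t\ge0$.

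The heart of the argument is the computation of the groupoid regular representation of $F$. Using the product in $\Gamma\ltimes X$, for $\xi\in\ell^2\Gamma$ and $\gamma\in\Gamma$ one obtains
\begin{align*}
    [\lambda_x(F)\xi](\gamma)
    &=\sum_{\eta\in\Gamma}F((\gamma,x)(\eta,x)^{-1})\xi(\eta)\\
    &=\sum_{\eta\in\Gamma}F(\gamma\eta^{-1},\eta\cdot x)\xi(\eta)
    =\sum_{\eta\in\Gamma}f(\gamma\eta^{-1})\xi(\eta).
\end{align*}
The crucial point is that, because $F$ ignores the $X$-variable, the value $F(\gamma\eta^{-1},\eta\cdot x)=f(\gamma\eta^{-1})$ does not depend on $x$, so under the identification $\ell^2((\Gamma\ltimes X)_x)\cong\ell^2\Gamma$ the operator $\lambda_x(F)$ is exactly the left regular representation of $f$ on $\ell^2\Gamma$, uniformly in $x$. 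Consequently $\|F\|_{C^*_r(\Gamma\ltimes X)}=\sup_{x\in X}\|\lambda_x(F)\|=\|f\|_{C^*_r\Gamma}$.

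Finally, assuming $\Gamma\ltimes X$ has (RD), there are constants $C,t\ge0$ such that $\|G\|_{C^*_r(\Gamma\ltimes X)}\le C\|G\|_{\ell,t}$ for all $G\in C_c(\Gamma\ltimes X)$. Applying this to $G=F$ and combining it with the two identities above gives $\|f\|_{C^*_r\Gamma}\le C\|f\|_{\ell,t}$, which holds for every finitely supported $f$ and hence establishes property (RD) for $\Gamma$. I do not expect a serious obstacle; the only step requiring genuine care is confirming that $\lambda_x(F)$ is truly independent of $x$, which is exactly the place where the group action drops out because the lift $F$ is constant along $X$.
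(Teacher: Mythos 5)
Your proof is correct and takes essentially the same approach as the paper: lift $f$ to the function $(\gamma,x)\mapsto f(\gamma)$ on $\Gamma\ltimes X$, check that each fibre representation of the lift is unitarily equivalent to the group regular representation $\lambda(f)$ and that the weighted seminorms are preserved, then apply the groupoid (RD) inequality to the lift. The paper writes the fibrewise equivalence via lifted vectors $\hat\xi_x\in\ell^2(\Gamma\ltimes X)_x$ rather than invoking the identification $\ell^2((\Gamma\ltimes X)_x)\cong\ell^2\Gamma$ directly, but this is the same computation as yours.
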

\begin{proof}
    Let us write $\G=\Gamma\ltimes X$, and define the length function $\ell_{\Gamma\curvearrowright X}$ on $\Gamma\ltimes X$ by $\ell_{\Gamma\curvearrowright X}(\gamma,x)=\ell(\gamma)$ for all $\gamma\in\Gamma$ and $x\in X$.  
    If $f\in\C\Gamma$, define $\hat f\in\CcG$ by $\hat f(\gamma,x)=f(\gamma)$.  
    For any $x\in X$ and $\xi\in\ell^2\Gamma$, define $\hat\xi_x\in\ell^2\G_x$ by $\hat\xi_x(\gamma,x)=\xi(\gamma)$ for all $\gamma\in\Gamma$.  
    Moreover, one checks that 
    \begin{align*}
        [\lambda_x(\hat f)\hat\xi_x](\gamma,x)=[\lambda(f)\xi](\gamma), \qquad \|\hat f\|_{\ell_\G,t,s,x}=\|\hat f\|_{\ell_\G,t,s,x}=\|f\|_{\ell,t}
    \end{align*}
    for all $f\in\C\Gamma$, $\xi\in\ell^2\Gamma$, $\gamma\in\Gamma$, $x\in X$, and $t\ge0$.  
    If now $\G$ has (RD) with respect to $\ell_{\Gamma\curvearrowright X}$, there exist $C,t\ge0$ such that $\|g\|_{C^*_r\G}\le C\|g\|_{\ell_{\Gamma\curvearrowright X},t}$ for all $g\in\CcG$.   
    If now $f\in\C\Gamma$, we have 
    \begin{align*}
        \|f\|_{C^*_r\Gamma}=\|\hat f\|_{C^*_r\G}\le C\|\hat f\|_{\ell_{\Gamma\curvearrowright X},t}=C\|f\|_{\ell,t}.
    \end{align*}
    Since $C,t\ge0$ did not depend on $f$, the result follows.
\\\end{proof}

By Theorems \ref{princRD} and \ref{topprincRD}, the converse to this result fails drastically.  
For the time being, we shall attempt to generalize this result as much as possible.  
To that end, let $\H,\G$ be groupoids, and let $\phi:\H\to\G$ be a homomorphism.
Given a twist $\E$ over $\G$, we can construct a pullback twist $\phi^*\E$ over $\H$, making the following diagram commute:
\begin{center}
    \begin{tikzcd}
        \Ho\times\T \arrow[d] \arrow[r, "\phi^*i"] & \phi^*\E \arrow[d, "\phi^*"] \arrow[r, "\phi^*\pi"] & \H \arrow[d, "\phi"] \\
        \Go\times\T \arrow[r, "i"] & \E \arrow[r, "\pi"] & \G.                     
\end{tikzcd}
\end{center}
Here, $\phi^*\E=\E\tensor[_\pi]{*}{_\phi}\H$ is the fibered product, and all induced maps are the obvious ones.  
Note that the twist $\E\times\H$ over $\G\times\H$ considered in Proposition \ref{products} is just the pullback of the twist $\E$ along the projection $\G\times\H\to\G$ onto the first factor.  

We say that $\phi$ is \textit{$n$-regular} (for some $n\in\N$) if $\phi(\Ho)=\Go$, and $|\phi^{-1}(\gamma)\cap\H_y|=n$ for all $y\in\Ho$ and $\gamma\in\G_{\phi(y)}$.  
Note that this also implies $\phi$ is surjective, and that $|\phi^{-1}(\gamma)\cap\H^y|=n$ for all $y\in\Ho$ and $\gamma\in\G^{\phi(y)}$.

It is worth noting that 1-regular groupoid homomorphisms correspond to groupoid actions.  
Indeed, if $\G$ acts on a set $Y$, then the projection map $\pi:\G\ltimes Y\to\G$ is 1-regular.  
Conversely, let $\phi:\H\to\G$ be a 1-regular groupoid homomorphism, and set $p=\phi^{(0)}$ and $Y=\Ho$.  
If $y\in Y$ and $\gamma\in\G_{p(y)}$, then there is a unique $\eta_{\gamma,y}\in\H_y$ such that $\phi(\eta_{\gamma,y})=\gamma$, and the action of $\G$ on $Y$ is given by $\gamma\cdot y=r(\eta_{\gamma,y})$. 
Moreover, the map $\H\to\G\ltimes Y$, $\eta\mapsto(\phi(\eta),s(\eta))$, is a groupoid isomorphism.

\begin{lemma}
    Let $\G$ and $\H$ be \'etale groupoids, let $\E$ be a twist over $\G$ with section $\rho:\G\to\E$, and let $\phi:\H\to\G$ be an $n$-regular groupoid homomorphism.
    \begin{enumerate}[label = (\roman*)]
        \item If $y\in\Ho$ and $\xi\in\C\G_{\phi(y)}$, then $\hat\phi_y\xi:=\xi\circ\phi$ belongs to $\C\H_y$.  Moreover, the mapping $\xi\mapsto\hat\phi_y\xi$ extends to a bounded linear map $\ell^2\G_{\phi(y)}\to\ell^2\H_y$ such that $\|\hat\phi_y\xi\|_{\ell^2\H_y}=n^{1/2}\|\xi\|_{\ell^2\G_{\phi(y)}}$ for all $\xi\in\ell^2\G_{\phi(y)}$.
        \item If $\phi$ is continuous and proper, then $\|\hat\phi f\|_{\phi^*\E,\phi^*\ell,t}=n^{1/2}\|f\|_{\E,\ell,t}$ for any $t\ge0$ and $f\in\ScGE$ and any length $\ell$ on $\G$.
        \item If $\phi$ is continuous and proper, then  $\lambda_y^{\phi^*\rho}(\hat\phi f)\hat\phi_y=n\hat\phi_y\lambda_{\phi(y)}^\rho(f)$ for any $f\in\ScGE$ and $y\in\Ho$.
    \end{enumerate}
\end{lemma}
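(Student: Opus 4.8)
The plan is to reduce all three parts to a single \emph{fibre-counting identity}. Since $\phi$ is a homomorphism it carries $\H_y$ into $\G_{\phi(y)}$, and $n$-regularity says this restriction is an $n$-to-one surjection; hence for any function $g$ on $\G_{\phi(y)}$ one has
\[
\sum_{\eta\in\H_y}g(\phi(\eta))=n\sum_{\gamma\in\G_{\phi(y)}}g(\gamma).
\]
Before invoking this I would pin down the pullback data. Writing $\phi^*\E=\E\tensor[_\pi]{*}{_\phi}\H$, the pullback section is $(\phi^*\rho)(\eta)=(\rho(\phi(\eta)),\eta)$, the pullback function is $(\hat\phi f)(\ve,\eta)=f(\ve)$, and the pullback length is $(\phi^*\ell)(\eta)=\ell(\phi(\eta))$. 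A direct check shows $\phi^*\rho$ is a section for $\phi^*\pi$ and that $\hat\phi f$ is $\T$-equivariant, so $\hat\phi f\in\Sigma_c(\H,\phi^*\E)$ once compact support is verified; for the latter I would use properness of $\phi$, since $\supp(\hat\phi f)$ projects into the compact set $\phi^{-1}(\pi(\supp f))$.

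For (i), finiteness of the support of $\hat\phi_y\xi=\xi\circ\phi$ is immediate: it is contained in $(\phi|_{\H_y})^{-1}(\supp\xi)$, a union of $|\supp\xi|$ fibres each of size $n$. Applying the fibre-counting identity to $g=|\xi|^2$ gives $\|\hat\phi_y\xi\|_{\ell^2\H_y}^2=n\|\xi\|_{\ell^2\G_{\phi(y)}}^2$, which simultaneously establishes boundedness and the stated norm, so $\hat\phi_y$ extends to all of $\ell^2\G_{\phi(y)}$.

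For (ii), I would evaluate the seminorm at a fixed $y\in\Ho$. Since $(\hat\phi f)((\phi^*\rho)(\eta))=f(\rho(\phi(\eta)))$, the fibre-counting identity applied to $g(\gamma)=|f(\rho(\gamma))|^2(1+\ell(\gamma))^{2t}$ yields $\|\hat\phi f\|_{\phi^*\E,\phi^*\ell,t,s,y}^2=n\|f\|_{\E,\ell,t,s,\phi(y)}^2$. Taking the supremum over $y$ and using $\phi(\Ho)=\Go$ gives $\|\hat\phi f\|_{\phi^*\E,\phi^*\ell,t,s}=n^{1/2}\|f\|_{\E,\ell,t,s}$; since $(\hat\phi f)^*=\hat\phi(f^*)$, the same holds for the adjoint seminorm, and taking the maximum gives the claim.

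For (iii), I would expand the left-hand side on a vector $\xi\in\C\G_{\phi(y)}$ via the convolution formula for $\lambda_y^{\phi^*\rho}$. Computing the product and inverse in the fibered-product groupoid gives $(\phi^*\rho)(\eta)(\phi^*\rho)(\eta')^{-1}=(\rho(\phi(\eta))\rho(\phi(\eta'))^{-1},\eta\eta'^{-1})$, so the value of $\hat\phi f$ there is $f(\rho(\phi(\eta))\rho(\phi(\eta'))^{-1})$, depending on $\eta'$ only through $\phi(\eta')$. The fibre-counting identity then collapses the sum over $\eta'\in\H_y$ into $n$ times the sum over $\gamma'\in\G_{\phi(y)}$, which is exactly $n[\hat\phi_y\lambda_{\phi(y)}^\rho(f)\xi](\eta)$; since both sides are bounded (using (i) and the compact support of $\hat\phi f$) and $\C\G_{\phi(y)}$ is dense, the identity extends to $\ell^2\G_{\phi(y)}$. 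The only genuinely delicate points are bookkeeping ones — correctly identifying the product and inverse in $\phi^*\E$ for (iii), and confirming compact support of $\hat\phi f$ through properness — rather than any analytic difficulty; once the fibre-counting identity is in hand, each part is a short computation.
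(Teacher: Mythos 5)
Your proposal is correct and follows essentially the same route as the paper: the fibre-counting identity $\sum_{\eta\in\H_y}g(\phi(\eta))=n\sum_{\gamma\in\G_{\phi(y)}}g(\gamma)$ is exactly the step the paper performs (implicitly, three times) when it rewrites sums over $\H_y$ as sums over $\G_{\phi(y)}$ weighted by $|\phi^{-1}(\gamma)\cap\H_y|=n$, and your treatment of (i)--(iii) matches the paper's computations, with only minor extra care (explicit pullback data, properness for compact support of $\hat\phi f$, and the density extension in (iii)).
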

\begin{proof}
    If $y\in\Ho$ and $\xi\in\C\G_{\phi(y)}$, then 
    \begin{align*}
        |\supp(\hat\phi_y\xi)|=|\phi^{-1}(\supp(\xi))\cap\H_y|=n|\supp(\xi)|,
    \end{align*}
    so $\hat\phi_y\xi\in\C\H_y$.  Moreover, we have 
     \begin{align*}
        \|\hat\phi_y\xi\|_{\ell^2\H_y}^2
        =\sum_{\eta\in\H_y}|\hat\phi_y\xi(\eta)|^2
        =\sum_{\gamma\in\G_{\phi(y)}}|\phi^{-1}(\gamma)\cap\H_y|\cdot|\xi(\gamma)|^2
        =n\|\xi\|_{\ell^2\G_{\phi(y)}}^2,
    \end{align*}
    and {\em (i)} follows in the familiar way.  Now assume that $\phi$ is a continuous, proper, $n$-regular groupoid homomorphism.  
    Fix a section $\rho:\G\to\E$ for the bundle map $\E\to\G$.
    If $f\in\ScGE$ and $y\in\Ho$, then proceeding as in the above calculation, one sees that
    \begin{align*}
        \|\hat\phi f\|_{\phi^*\E,\phi^*\ell,t,s,y}=n^{1/2}\|f\|_{\E,\ell,t,s,\phi(y)}
    \end{align*}
    for all $y\in\Ho$.  
    Since $\phi$ maps $\Ho$ onto $\Go$, we obtain
    \begin{align*}
        \|\hat\phi f\|_{\phi^*\E,\phi^*\ell,t,s}
        &=\sup_{y\in\Ho}\|\hat\phi f\|_{\phi^*\E,\phi^*\ell,t,s,y}
        =n^{1/2}\sup_{y\in\Ho}\|f\|_{\E,\ell,t,s,\phi(y)}\\
        &=n^{1/2}\sup_{x\in\Go}\|f\|_{\E,\ell,t,s,x}
        =n^{1/2}\|f\|_{\E,\ell,t,s}.
    \end{align*}
     As $\hat\phi$ is a $*$-homomorphism, we have 
     \begin{align*}
        \|(\hat\phi f)^*\|_{\phi^*\E,\phi^*\ell,t,s}
        =\|\hat\phi(f^*)\|_{\phi^*\E,\phi^*\ell,t,s}
        =n^{1/2}\|f^*\|_{\E,\ell,t,s},
    \end{align*}
    and {\em (ii)} follows.  
    To prove {\em (iii)}, let $f\in\ScGE$, $y\in\Ho$, $\xi\in\ell^2\G_{\phi(y)}$, and $\eta\in\H_y$ be given.  
    We have 
    \begin{align*}
        \left[\lambda_{y}^{\phi^*\rho}(\hat\phi f)(\hat\phi_y\xi)\right](\eta)
        &=\sum_{\kappa\in\H_y}f\left(\rho(\phi(\eta))\rho(\phi(\kappa))^{-1}\right)\xi(\phi(\kappa))\\
        &=\sum_{\gamma\in\G_{\phi(y)}}|\phi^{-1}(\gamma)\cap\H_y|\cdot f(\rho(\phi(\eta))\rho(\gamma)^{-1})\xi(\gamma)\\
        &=n\sum_{\gamma\in\G_{\phi(y)}}f(\rho(\phi(\eta))\rho(\gamma)^{-1})\xi(\gamma)\\
        &=n[\lambda_{\phi(y)}^\rho(f)\xi](\phi(\eta))\\
        &=n[\hat\phi_y\lambda_{p(y)}^\rho(f)\xi](\eta).
    \end{align*}
\end{proof}

\begin{theorem}\label{propreghoms}
    Let $\phi:\H\to\G$ be a homomorphism of \'etale groupoids that is continuous, proper, and $n$-regular for some $n\in\N$.  
    Let $\E$ be a twist over $\G$.
    If $\ell$ is a length function on $\G$, and $\H$ has property $\phi^*\E$-(RD) with respect to the length $\phi^*\ell$, then $\G$ has property $\E$- (RD) with respect to $\ell$. 
\end{theorem}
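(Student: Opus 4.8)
The plan is to transport the rapid decay inequality for $\G$ back to the one we already have for $\H$, using the pullback map $\hat\phi$ and the three assertions of the preceding lemma as the entire engine of the proof. First I would fix a section $\rho:\G\to\E$ for $\pi$ and let $\phi^*\rho$ denote the induced section of $\phi^*\E\to\H$. Given $f\in\ScGE$, I would observe that $\hat\phi f$ lies in $\Sigma_c(\H,\phi^*\E)$: continuity and $\T$-equivariance are inherited from $f$ and the definition of the pullback twist, while compact support of $\hat\phi f$ follows from properness of $\phi$ (since the support of $\hat\phi f$ is contained in $(\phi^*\pi)^{-1}\phi^{-1}(\pi(\supp f))$). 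Because $\H$ has $\phi^*\E$-(RD) with respect to $\phi^*\ell$, there exist constants $C,t\ge0$, independent of $f$, with $\|\hat\phi f\|_{C^*_r(\H,\phi^*\E)}\le C\|\hat\phi f\|_{\phi^*\E,\phi^*\ell,t}$.

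The heart of the argument is a fibrewise comparison of the regular representations of $f$ and $\hat\phi f$. Fixing $y\in\Ho$ and $\xi\in\ell^2\G_{\phi(y)}$, part (iii) of the lemma gives the intertwining identity $\lambda_y^{\phi^*\rho}(\hat\phi f)\,\hat\phi_y\xi=n\,\hat\phi_y\lambda_{\phi(y)}^\rho(f)\xi$. Computing the norm of the left-hand side and applying the scaling relation $\|\hat\phi_y\,\cdot\,\|_{\ell^2\H_y}=n^{1/2}\|\cdot\|_{\ell^2\G_{\phi(y)}}$ from part (i) to the vector $\lambda_{\phi(y)}^\rho(f)\xi$, I would obtain $n^{3/2}\|\lambda_{\phi(y)}^\rho(f)\xi\|_{\ell^2\G_{\phi(y)}}=\|\lambda_y^{\phi^*\rho}(\hat\phi f)\hat\phi_y\xi\|_{\ell^2\H_y}\le\|\lambda_y^{\phi^*\rho}(\hat\phi f)\|\cdot n^{1/2}\|\xi\|_{\ell^2\G_{\phi(y)}}$, where the last step also uses part (i) on the input $\xi$. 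Cancelling $n^{1/2}$ and taking the supremum over $\xi$ then yields the fibrewise bound $\|\lambda_{\phi(y)}^\rho(f)\|\le n^{-1}\|\lambda_y^{\phi^*\rho}(\hat\phi f)\|$.

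Finally I would globalize. Since $\phi$ is $n$-regular it satisfies $\phi(\Ho)=\Go$, so as $y$ ranges over $\Ho$ the point $\phi(y)$ ranges over all of $\Go$. Taking the supremum over $y\in\Ho$ of the previous inequality therefore gives $\|f\|_{\CrGE}=\sup_{x\in\Go}\|\lambda_x^\rho(f)\|\le n^{-1}\|\hat\phi f\|_{C^*_r(\H,\phi^*\E)}$. Combining this with the (RD) inequality for $\hat\phi f$ and the seminorm identity $\|\hat\phi f\|_{\phi^*\E,\phi^*\ell,t}=n^{1/2}\|f\|_{\E,\ell,t}$ from part (ii), I arrive at $\|f\|_{\CrGE}\le n^{-1}\cdot Cn^{1/2}\|f\|_{\E,\ell,t}=Cn^{-1/2}\|f\|_{\E,\ell,t}$, which is precisely $\E$-(RD) for $\G$ with constants $Cn^{-1/2}$ and $t$.

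I do not expect a serious obstacle, as the proof is essentially an assembly of the three parts of the lemma; the one point meriting care is the passage from the per-vector estimate to the operator-norm estimate for $\lambda_{\phi(y)}^\rho(f)$. This is legitimate precisely because $\hat\phi_y$ is, up to the scalar $n^{1/2}$, an isometry: the intertwining identity of part (iii) holds on the range of $\hat\phi_y$, and since the estimate is derived for an \emph{arbitrary} $\xi\in\ell^2\G_{\phi(y)}$, taking the supremum recovers the full operator norm rather than merely its restriction to a subspace. The only other subtlety, the membership $\hat\phi f\in\Sigma_c(\H,\phi^*\E)$, is exactly where the properness hypothesis on $\phi$ is used.
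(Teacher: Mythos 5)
Your proposal is correct and follows essentially the same route as the paper: both proofs fix a section $\rho$, use part (iii) of the lemma to intertwine $\lambda_{\phi(y)}^{\rho}(f)$ with $\lambda_y^{\phi^*\rho}(\hat\phi f)$ via $\hat\phi_y$, use the $n^{1/2}$-scaling from part (i) to pass between the fibre norms, exploit $\phi(\Ho)=\Go$ to take suprema, and finish with part (ii) to obtain the same constant $Cn^{-1/2}$. Your added remarks on why $\hat\phi f\in\Sigma_c(\H,\phi^*\E)$ (properness) and why the per-vector estimate upgrades to an operator-norm bound are sound and make explicit points the paper leaves implicit.
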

\begin{proof}
    Let $f\in\ScGE$, $x\in\Go$, and $\xi\in\ell^2\G_x$ with $\|\xi\|_{\ell^2\G_x}=1$ be given.  
    Let $\rho:\G\to\E$ be a section for the bundle map $\E\to\G$.
    Fix some $y\in\Ho$ such that $\phi(y)=x$. 
    Applying {\em (i)} and {\em (iii)} of the previous lemma, we see that
    \begin{align*}
        \|\lambda_x^\rho(f)\xi\|_{\ell^2\G_x}
        &=n^{-1/2}\|\hat\phi_y\lambda_x^\rho(f)\xi\|_{\ell^2\H_y}\\
        &=n^{-3/2}\|\lambda_y^{\phi^*\rho}(\hat\phi f)\hat\phi_y\xi\|_{\ell^2\H_y}\\
        &\le n^{-3/2}\|\lambda_y^{\phi^*\rho}(\hat\phi f)\|_{\bbB(\ell^2(\H_y)}\|\hat\phi_y\xi\|_{\ell^2\H_y}\\
        &\le n^{-1}\|\hat\phi f\|_{C^*_r(\H,\phi^*\E)}
    \end{align*}
    Taking suprema, we obtain $\|f\|_{C^*_r(\G,\E)}\le n^{-1}\|\hat\phi f\|_{C^*_r(\H,\phi^*\E)}$.  
    Assuming $\H$ has property $\phi^*\E$-(RD) with respect to $\phi^*\ell$, there exist constants $C,t\ge0$ such that $\|h\|_{C^*_r(\H,\phi^*\E)}\le C\|h\|_{\phi^*\E,\phi^*\ell,t}$ for all $h\in\Sigma_c(\H,\phi^*\E)$.  
    Applying {\em (ii)} from the previous lemma, the above estimate now yields
    \begin{align*}
        \|f\|_{\CrGE}
        \le n^{-1}\|\hat\phi f\|_{C^*_r(\H,\phi^*\E}
        \le Cn^{-1}\|\hat\phi f\|_{\phi^*\ell,t}
        =Cn^{-1/2}\|f\|_{\ell,t}
    \end{align*}
    As $f\in\ScGE$ was arbitrary, $\G$ has (RD) with respect to $\ell$.
\\\end{proof}

First, observe that Proposition \ref{grpacts} is a corollary of the above result:  
The quotient map $\pi:\Gamma\ltimes X\to\Gamma$ is $1$-regular for every continuous action, and proper when the space $X$ is compact.

Generalizing the above corollary, we now turn our attention to groupoid actions.  
Let $\G$ be an \'etale groupoid, and suppose it admits a left action on the locally compact Hausdorff space $Y$ with anchor map $p:Y\to\Go$.  
Let $\H=\G\ltimes Y$, and let $\pi:\H\to\G$ be the projection map: $\pi(\gamma,y)=\gamma$.  
Then $\pi$ is a continuous and 1-regular groupoid homomorphism, so for the above result to apply we only need to supply conditions for $\pi$ to be a proper map.  
This turns out to be the case when $p$ is a finite cover.  
To prove this, we require a lemma.

\begin{lemma}\label{dirsets}
    Let $I$ be a directed set, and let $I_1,\ldots,I_n$ be subsets of $I$.  
    If the union $\cup_{k=1}^nI_k$ is cofinal in $I$, then there is some $k\in\{1,\ldots,n\}$ such that $I_k$ is cofinal in $I$.  
\end{lemma}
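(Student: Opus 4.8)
The plan is to argue by contradiction, turning a statement about cofinality of a union into a statement about finitely many \emph{witnesses} and then invoking directedness. Suppose, toward a contradiction, that none of $I_1,\dots,I_n$ is cofinal in $I$. Negating the definition of cofinality, for each $k\in\{1,\dots,n\}$ there is a witness $i_k\in I$ which is dominated by no element of $I_k$; that is, $i_k\not\le j$ for every $j\in I_k$. The first step is simply to record these $n$ witnesses $i_1,\dots,i_n$.

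Next I would exploit that $I$ is directed. By the defining property, every pair of elements has an upper bound, and by an easy induction this extends to any finite subset, so the finite set $\{i_1,\dots,i_n\}$ admits a common upper bound $i^*\in I$ with $i_k\le i^*$ for all $k$. Now apply the hypothesis that $\bigcup_{k=1}^nI_k$ is cofinal: there exists $j\in\bigcup_{k=1}^nI_k$ with $i^*\le j$. Choose an index $m$ with $j\in I_m$. Then by transitivity
\begin{align*}
    i_m\le i^*\le j,
\end{align*}
so $i_m\le j$ with $j\in I_m$, contradicting the choice of the witness $i_m$. This contradiction forces some $I_k$ to be cofinal, completing the argument.

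I do not expect any genuine technical obstacle here; the entire content is the correct negation of cofinality together with a single application of directedness. The one conceptual point worth flagging is \emph{where} finiteness is used: directedness of $I$ guarantees upper bounds only for finite subsets, so it is precisely the finiteness of $n$ that allows the $n$ witnesses $i_1,\dots,i_n$ to be bounded simultaneously by a single $i^*$. For an infinite family the conclusion can fail, and this is the hinge of the proof rather than a difficulty to be overcome.
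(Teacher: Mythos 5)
Your proof is correct and follows essentially the same route as the paper's: negate cofinality to obtain finitely many witnesses, use directedness to bound them all by a single element, and then use cofinality of the union to produce an element of some $I_m$ above that bound, yielding the contradiction (the paper phrases this as a direct proof that $I_n$ is cofinal when $I_1,\dots,I_{n-1}$ are not, but the mechanism is identical). If anything, your version is slightly more careful, since you explicitly invoke the cofinality of $\bigcup_k I_k$ when producing the element $j$, a step the paper's terse write-up leaves implicit.
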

\begin{proof}
    Write $I_0=I_1\cup\cdots\cup I_n$.  
    The result is trivial if $n=1$, so assume $n\ge 2$, and suppose that $I_1,\ldots,I_{n-1}$ are not cofinal in $I$.  
    Then for each $k\in\{1,\ldots,n-1\}$, there is some $i_k\in I$ such that whenever $i\in I$ and $i\ge i_k$ we must have $i\notin I_k$.  
    Fix some $i_0\in I$ such that $i_0\ge i_k$ for each $k<n$.  
    Let $i\in I$ be given, and let $i_n\in I$ be such that $i_n\ge i$ and $i_n\ge i_0$. 
    For each $k\in\{1,\ldots,n-1\}$, we have $i_n\ge i_k$, so $i_n\notin I_k$.  
    This forces $i_n\in I_n$, and it follows that $I_n$ is cofinal in $I$.  
\\\end{proof}

\begin{proposition}\label{propprojaction}
    Let $\G$ be an \'etale groupoid, and suppose $\G$ acts on a locally compact space $X$ such that the anchor map $p:X\to\Go$ is a finite cover.  
    Then the projection map $\pi:\G\ltimes X\to\G$ is proper. 
\end{proposition}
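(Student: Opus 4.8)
The plan is to verify properness directly from the definition: for every compact $K\subset\G$, the preimage $\pi^{-1}(K)=\{(\gamma,y)\in\G\ltimes X:\gamma\in K\}$ must be shown to be compact. Since $\G\ltimes X$ carries the subspace topology from $\G\times X$ and is therefore Hausdorff, it suffices to prove that every net in $\pi^{-1}(K)$ admits a subnet converging to a point of $\pi^{-1}(K)$; here I would use that convergence in $\G\ltimes X$ is simply coordinatewise convergence of the two entries.

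So I would begin with a net $((\gamma_\alpha,y_\alpha))_\alpha$ in $\pi^{-1}(K)$. Because $K$ is compact I can pass to a subnet along which $\gamma_\alpha\to\gamma$ for some $\gamma\in K$. Continuity of the source map then gives $p(y_\alpha)=s(\gamma_\alpha)\to s(\gamma)=:x\in\Go$. The goal is to produce a further subnet along which $y_\alpha$ converges to some $y\in X$ with $p(y)=x$; once this is done, $(\gamma,y)\in\G\ltimes X$ (since $s(\gamma)=x=p(y)$), it lies in $\pi^{-1}(K)$, and the subnet converges to it coordinatewise, which finishes the argument.

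To extract this subnet I would exploit the covering structure of $p$. Choose an open neighborhood $U$ of $x$ that is evenly covered, so that $p^{-1}(U)=\bigsqcup_{i=1}^n V_i$ with each restriction $p|_{V_i}:V_i\to U$ a homeomorphism. Since $p(y_\alpha)\to x\in U$, eventually $p(y_\alpha)\in U$, hence eventually $y_\alpha\in\bigsqcup_{i=1}^n V_i$. Setting $I_i=\{\alpha:y_\alpha\in V_i\}$, the union $\bigcup_{i=1}^n I_i$ is cofinal in the (directed) index set, so Lemma \ref{dirsets} supplies an index $i$ for which $I_i$ is cofinal. Restricting to $I_i$ yields a subnet lying entirely in $V_i$; because $p|_{V_i}$ is a homeomorphism and $p(y_\alpha)\to x$, we obtain $y_\alpha=(p|_{V_i})^{-1}(p(y_\alpha))\to(p|_{V_i})^{-1}(x)=:y$ with $p(y)=x$. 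Passing to this subnet retains $\gamma_\alpha\to\gamma$, so $(\gamma_\alpha,y_\alpha)\to(\gamma,y)\in\pi^{-1}(K)$, as needed.

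The only genuinely delicate point is the passage from ``$y_\alpha$ eventually lies in the finite disjoint union $\bigsqcup_i V_i$'' to ``$y_\alpha$ lies in a single sheet $V_i$ along a subnet,'' since a priori the net may oscillate among the finitely many sheets. This is exactly what Lemma \ref{dirsets} is designed to resolve, by promoting the cofinality of the union to the cofinality of one of the pieces $I_i$. Everything else is routine bookkeeping with subnets, together with the observation that $\G\ltimes X$ inherits coordinatewise convergence from $\G\times X$.
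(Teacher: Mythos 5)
Your proof is correct and follows essentially the same route as the paper's: pass to a subnet with convergent first coordinate, use an evenly covered neighborhood of $s(\gamma)$ together with Lemma \ref{dirsets} to pin the second coordinate into a single sheet along a cofinal subnet, and conclude coordinatewise convergence in $\pi^{-1}(K)$. The only cosmetic difference is that you invoke continuity of $(p|_{V_i})^{-1}$ directly, where the paper unpacks the same fact by chasing arbitrary neighborhoods of the limit point.
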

\begin{proof}
    Let $K\subset\G$ be compact, and let $(\gamma_i)_{i\in I}$ be a net in $\pi^{-1}(K)$.  
    For each $i\in I$ write $\gamma_i=(\sigma_i,x_i)$, where $\sigma_i\in K$ and $x_i\in X$.  
    As $K$ is compact, by passing to a subnet we may assume that the net $(\sigma_i)_{i\in I}$ converges to some $\sigma\in K$.  
    \\\indent Let us write $p^{-1}(s(\sigma))=\{x_1,\ldots,x_n\}$, and fix an open neighborhood $V$ of $s(\sigma)$ in $\Go$ that is evenly covered by the sets $\{V_1,\ldots,V_n\}$, where for each $k\in\{1,\ldots,n\}$, $V_k\subset X$ is an open neighborhood of $x_k$.  
    For $1\le k\le n$ let us write $I_k=\{i\in I:x_i\in V_k\}$, and $I_0=I_1\sqcup\cdots\sqcup I_n$.  
    As $I_0=\{i\in I:s(\gamma_i)\in V\}$, and $s(\gamma_i)\to s(\gamma)$, $I_0$ is cofinal in $I$.  
    Lemma \ref{dirsets} now implies that there is some $k\in\{1,\ldots,n\}$ such that $I_k$ is cofinal in $I$.  
    \\\indent Let $U\subset X$ be an open neighborhood of $x_k$.  
    Then $p(U\cap V_k)$ is an open neighborhood of $s(\sigma)$, so there is some $i_0\in I$ such that $p(x_i)\in p(U\cap V_k)$ whenever $i\ge i_0$.  
    Since $I_k$ is cofinal in $I$, we may assume that $i_0\in I_k$.  
    Thus $x_i\in U\cap V_k$ whenever $i\in I_k$ and $i\ge i_0$.   
    It follows that the net $(\gamma_i)_{i\in I_k}$ converges to $(\sigma,x_k)\in\pi^{-1}(K)$, and therefore $\pi^{-1}(K)$ is compact.  
\\\end{proof}

\begin{corollary}
    Let $\G$ be an \'etale groupoid, and let $\ell$ be a length function on $\G$.  
    Suppose that $\G$ admits an action on a locally compact space $Y$ and that the anchor map $p:Y\to\Go$ is a finite covering map.  
    If $\G\ltimes Y$ has property (RD) with respect to the length function induced by $\pi^*\ell$, then $\G$ has property (RD) with respect to $\ell$.
\end{corollary}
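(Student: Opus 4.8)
The plan is to recognize this as a direct application of Theorem \ref{propreghoms} to the projection homomorphism $\pi:\G\ltimes Y\to\G$, taking the twist to be trivial. To begin, I would set $\H=\G\ltimes Y$ and recall from the discussion identifying $1$-regular homomorphisms with groupoid actions that the projection $\pi(\gamma,y)=\gamma$ is a continuous, $1$-regular groupoid homomorphism; that $\pi(\Ho)=\Go$ follows from the surjectivity of the anchor map $p$. Of the three hypotheses demanded by Theorem \ref{propreghoms}, continuity and $1$-regularity are formal consequences of the transformation-groupoid structure, while properness is furnished by Proposition \ref{propprojaction}, which applies precisely because $p$ is assumed to be a finite covering map.

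Next I would take $\E=\G\times\T$ to be the trivial twist over $\G$. The one point requiring care is matching the untwisted hypothesis to the twisted framework of Theorem \ref{propreghoms}: I would observe that the pullback twist $\pi^*\E$ over $\H$ is again the trivial twist $(\G\ltimes Y)\times\T$, as is immediate from the fibered-product description of the pullback construction. Therefore, under the identifications of Example \ref{trivtwist}, the hypothesis that $\G\ltimes Y$ has property (RD) with respect to $\pi^*\ell$ is literally the statement that $\H$ has $\pi^*\E$-(RD) with respect to $\pi^*\ell$.

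With these identifications in hand, Theorem \ref{propreghoms} applies with $n=1$ and yields that $\G$ has $\E$-(RD) with respect to $\ell$; as $\E$ is the trivial twist, this is exactly property (RD), completing the argument. I do not expect any substantive obstacle here: all of the analytic content resides in Theorem \ref{propreghoms} and Proposition \ref{propprojaction}, so the only thing left to verify is the routine bookkeeping that the pullback of the trivial twist along $\pi$ is again trivial, which is clear from the definition of the pullback twist $\phi^*\E$ as a fibered product.
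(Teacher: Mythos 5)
Your proposal is correct and matches the paper's intended argument exactly: the corollary is stated as an immediate consequence of Theorem \ref{propreghoms} applied to the projection $\pi:\G\ltimes Y\to\G$ (continuous and $1$-regular by the discussion identifying $1$-regular homomorphisms with actions, proper by Proposition \ref{propprojaction}), with the trivial twist pulling back to the trivial twist. Your extra care in checking that $\pi^*(\G\times\T)\cong(\G\ltimes Y)\times\T$ is the one bookkeeping step the paper leaves implicit, and you handle it correctly.
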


As a last application, we consider blow ups.  
Let $\G$ be an \'etale groupoid, let $Y$ be a locally compact space, and let $p:Y\to\Go$ be a surjective local homeomorphism.  
We denote by $\G[p]$ the blow up of $\G$ by the map $p$.  
This is, by definition, the groupoid whose underlying space is $Y\tensor[_p]{*}{_r}\G\tensor[_s]{*}{_p}Y$, with the obvious groupoid operations.  
With the subspace topology coming from the product topology on $Y\times\G\times Y$, $\G[p]$ is an \'etale groupoid with unit space homeomorphic to $Y$.

Given a local homeomorphism $p:Y\to\Go$, we define a map $p_0:\G[p]\to\G$ by $p_0(w,\gamma,y)=\gamma$.  
This is a continuous groupoid homomorphism.  
Moreover, we have a result similar to the above lemma for group actions and finite covers.

\begin{proposition}
    Let $\G$ be an \'etale groupoid, and let $p:Y\to\Go$ be an $n$-fold covering map.  
    Then the map $p_0:\G[p]\to\G$ is an $n$-regular and proper groupoid homomorphism.
\end{proposition}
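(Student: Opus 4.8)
The plan is to verify the two assertions separately, treating the already-noted fact that $p_0$ is a continuous groupoid homomorphism as given. Throughout I identify the unit space of $\G[p]$ with $Y$, so that $s(w,\gamma,y)=y$, $r(w,\gamma,y)=w$, and $p_0$ restricts on units to $p:Y\to\Go$ (indeed $p_0(y,p(y),y)=p(y)$).

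First I would establish $n$-regularity. Since $p$ is an $n$-fold cover it is surjective, so $p_0(\G[p]^{(0)})=p(Y)=\Go$. Now fix $y\in Y$ and $\gamma\in\G_{p(y)}$, that is, $s(\gamma)=p(y)$. An element of the source fibre $\G[p]_y$ mapping to $\gamma$ under $p_0$ is precisely a triple $(w,\gamma,y)$ subject to the single remaining constraint $p(w)=r(\gamma)$, so $w\mapsto(w,\gamma,y)$ is a bijection from $p^{-1}(r(\gamma))$ onto $p_0^{-1}(\gamma)\cap\G[p]_y$. Because $p$ is an $n$-fold cover, $|p^{-1}(r(\gamma))|=n$, and hence $|p_0^{-1}(\gamma)\cap\G[p]_y|=n$ for all admissible $y$ and $\gamma$. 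This is exactly $n$-regularity.

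For properness, the key point is that an $n$-fold covering map of locally compact Hausdorff spaces is itself proper. Given a compact $K_0\subseteq\Go$, local compactness lets one cover $K_0$ by finitely many open sets $W_1,\ldots,W_m$ whose closures are compact and contained in evenly covered open sets $U_j$; then each $p^{-1}(\overline{W_j})\cong\overline{W_j}\times\{1,\ldots,n\}$ is compact, and $p^{-1}(K_0)$ is a closed subset of $\bigcup_j p^{-1}(\overline{W_j})$, hence compact. Granting this, let $K\subseteq\G$ be compact; then $r(K)$ and $s(K)$ are compact in $\Go$, so $p^{-1}(r(K))$ and $p^{-1}(s(K))$ are compact in $Y$. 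Every $(w,\gamma,y)\in p_0^{-1}(K)$ satisfies $\gamma\in K$, $p(w)=r(\gamma)\in r(K)$, and $p(y)=s(\gamma)\in s(K)$, so
$$p_0^{-1}(K)\subseteq p^{-1}(r(K))\times K\times p^{-1}(s(K)),$$
a compact subset of $Y\times\G\times Y$. Since $\G[p]$ is closed in $Y\times\G\times Y$ (it is cut out by the equalities $p(w)=r(\gamma)$ and $s(\gamma)=p(y)$, which are closed conditions as $\Go$ is Hausdorff) and $Y\times K\times Y$ is closed, the set $p_0^{-1}(K)=\G[p]\cap(Y\times K\times Y)$ is closed; being closed inside a compact set, it is compact, so $p_0$ is proper.

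I expect the main obstacle to be the properness of $p$ itself, i.e. justifying that a finite cover is proper via the even-covering argument above; once that is in hand, properness of $p_0$ reduces to the elementary observation that $p_0^{-1}(K)$ is a closed subset of the compact box $p^{-1}(r(K))\times K\times p^{-1}(s(K))$. One could alternatively run a net argument in the spirit of Proposition \ref{propprojaction}, passing to a subnet of $(\gamma_i)$ converging in $K$ and then exploiting the even-covering structure over a neighborhood of the limit to extract convergent subnets of the $Y$-coordinates; this avoids isolating properness of $p$ as a separate statement but rests on the same underlying fact.
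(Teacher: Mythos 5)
Your proof is correct, but the properness half takes a genuinely different route from the paper. The paper dismisses $n$-regularity as clear (your bijection $w\mapsto(w,\gamma,y)$ between $p^{-1}(r(\gamma))$ and $p_0^{-1}(\gamma)\cap\G[p]_y$ is exactly the verification it omits) and then proves properness of $p_0$ by a net argument: it invokes the same scheme as Proposition \ref{propprojaction}, i.e.\ pass to a subnet of $(\gamma_i)$ converging in $K$, use an evenly covered neighborhood of the limit's source/range, and apply the cofinality Lemma \ref{dirsets} to extract a convergent subnet of the $Y$-coordinates --- this is precisely the alternative you sketch in your last sentence. Instead, you isolate the clean topological fact that an $n$-fold covering map of locally compact Hausdorff spaces is proper, and then observe that
\begin{align*}
p_0^{-1}(K)=\G[p]\cap\bigl(Y\times K\times Y\bigr)\subseteq p^{-1}(r(K))\times K\times p^{-1}(s(K))
\end{align*}
is a closed subset of a compact box, hence compact. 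This is more elementary and self-contained: it avoids nets, subnets, and Lemma \ref{dirsets} entirely, and it cleanly separates the covering-space input (properness of $p$) from the groupoid structure (the closed-box argument, which only needs continuity of $r$, $s$, $p$ and the Hausdorff property). What the paper's approach buys, by contrast, is economy of exposition within its own framework --- having already proved Proposition \ref{propprojaction}, it can declare the present proof ``similar'' and omit it --- and its net technique works verbatim in situations where one prefers to argue pointwise rather than assemble global compact sets. One small point to make explicit in your write-up: the closedness of $\G[p]$ in $Y\times\G\times Y$ uses that $\Go$ is Hausdorff, which holds here because the paper's standing convention makes $\G$ (hence the clopen subset $\Go$) locally compact Hausdorff; you do note this, and it is needed for both conditions $p(w)=r(\gamma)$ and $s(\gamma)=p(y)$ to cut out closed sets.
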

\begin{proof}
    It is clear that $p_0$ is $n$-regular when $p$ is an $n$-fold cover.  
    The proof that $p_0$ is a proper map is similar to the proof of Proposition \ref{propprojaction}, and will be omitted.
\\\end{proof}

\begin{corollary}
    Let $\G$ be an \'etale groupoid with a length function $\ell$, and let $p:Y\to\Go$ be a finite covering map.  If $\G[p]$ has property (RD) with respect to the length induced by $\ell$, then $\G$ has (RD) with respect to $\ell$.  
\end{corollary}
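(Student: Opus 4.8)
The plan is to obtain this corollary as a direct specialization of Theorem \ref{propreghoms}, using the map $p_0:\G[p]\to\G$ furnished by the preceding proposition. Since $p:Y\to\Go$ is a finite covering map, say an $n$-fold cover, that proposition tells us $p_0$ is a continuous, proper, and $n$-regular groupoid homomorphism. This is precisely the hypothesis required to invoke Theorem \ref{propreghoms}, so the only work is to arrange the twist and length data correctly.

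First I would take $\E$ to be the trivial twist $\G\times\T$ over $\G$ and check that the pullback twist $p_0^*\E$ is canonically isomorphic to the trivial twist $\G[p]\times\T$ over $\G[p]$. Unwinding the definition of the pullback as the fibered product $(\G\times\T)\tensor[_\pi]{*}{_{p_0}}\G[p]$, the assignment sending $((p_0(\eta),z),\eta)$ to $(\eta,z)$ provides the desired isomorphism. Consequently, the hypothesis that $\G[p]$ has property (RD) with respect to the length induced by $\ell$ (that is, $p_0^*\ell$) is exactly the statement that $\G[p]$ has property $p_0^*\E$-(RD) with respect to $p_0^*\ell$ for this choice of $\E$.

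With these identifications in hand, Theorem \ref{propreghoms}, applied with $\H=\G[p]$, $\phi=p_0$, and $\E=\G\times\T$, yields that $\G$ has property $\E$-(RD) with respect to $\ell$. Since $\E$ is the trivial twist, this is exactly property (RD) for $\G$ with respect to $\ell$, as desired. There is no genuine obstacle here: the only point requiring verification is the routine identification of the pullback of the trivial twist with the trivial twist on the blow-up, after which the conclusion follows immediately from the already-established Theorem \ref{propreghoms}.
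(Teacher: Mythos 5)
Your proof is correct and takes essentially the same route as the paper, which states this corollary as an immediate consequence of the preceding proposition (that $p_0:\G[p]\to\G$ is continuous, proper, and $n$-regular) combined with Theorem \ref{propreghoms}. Your additional check that the pullback of the trivial twist $\G\times\T$ along $p_0$ is canonically the trivial twist on $\G[p]$ is precisely the routine identification the paper leaves implicit.
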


\end{document}